%
%
%
%
%
%
%
%

%
%
%
%

\documentclass[11pt]{amsart}
\usepackage{amssymb,amsmath,color}
\usepackage{tikz-dependency}
\usepackage{tikz}
\usepackage{enumerate}
\usepackage{mathtools}

\usetikzlibrary{snakes}

\hyphenation{Hetyei}
\hyphenation{associa-hedron}

\setlength{\oddsidemargin}{ -0.5cm}
\setlength{\evensidemargin}{ -0.5cm}
\textwidth 17cm

\def\multichoose
#1#2{\ensuremath{\left(\kern-.3em\left(\genfrac{}{}{0pt}{}{#1}{#2}\right)\kern-.3em\right)}}

\newtheorem{theorem}{Theorem}[section]
\newtheorem{proposition}[theorem]{Proposition}
\newtheorem{lemma}[theorem]{Lemma}
\newtheorem{definition}[theorem]{Definition}

\newtheorem{corollary}[theorem]{Corollary}
\newtheorem{example}[theorem]{Example}
\newtheorem{remark}[theorem]{Remark}

\numberwithin{equation}{section}

\newcommand{\Nnn}{{\mathbb N}}
\newcommand{\Ppp}{{\mathbb P}}

\newcommand{\ii}{\overline{\imath}}
\newcommand{\lv}{\overline{\ell}}
\newcommand{\FS}{{\mathcal FS}}

\newcommand{\vanish}[1]{}

\parskip=12pt

\begin{document}

\title{Catalan--Spitzer permutations}

\author{Richard EHRENBORG, G\'abor HETYEI and Margaret READDY}

\address{Department of Mathematics, University of Kentucky, Lexington,
  KY 40506-0027.\hfill\break \tt http://www.math.uky.edu/\~{}jrge/,
  richard.ehrenborg@uky.edu.}

\address{Department of Mathematics and Statistics,
  UNC Charlotte, Charlotte NC 28223-0001.\hfill\break
\tt http://webpages.uncc.edu/ghetyei/,
ghetyei@uncc.edu.}

\address{Department of Mathematics, University of Kentucky, Lexington,
  KY 40506-0027.\hfill\break
\tt http://www.math.uky.edu/\~{}readdy/,
margaret.readdy@uky.edu.}

\keywords{Fuss--Catalan number, Raney number, continued fraction,
  Motzkin path, Foata--Strehl group action.}
\subjclass{Primary: 05A15;
Secondary: 
05A05,
05A10,
05E18,
20B99
}


\begin{abstract}
We study two classes of permutations intimately related to the visual
proof of Spitzer's lemma and Huq's generalization of the Chung--Feller
theorem.  Both classes of permutations are counted by the Fuss--Catalan
numbers. The study of one class leads to a generalization of 
results of Flajolet from continued fractions to continuants. The study
of the other class leads to the discovery of a restricted variant of the
Foata--Strehl group action. 
\end{abstract}

\maketitle

\section*{Introduction}

A classical result of lattice path enumeration arising
from tossing $n$ fair coins
is the Chung-Feller
theorem~\cite{Chung_Feller}. It states that the Catalan number $C_n$ counts not
only the lattice paths consisting of unit northeast and southeast steps
from $(0,0)$ to $(2n,0)$ that stay above the horizontal axis, but we can
also prescribe the number $r$ of northeast 
steps above the horizontal axis. For each $r\in \{0,1,\ldots,n\}$ we
have the 
same Catalan number of lattice paths. Generalizations of this result are
due to Spitzer~\cite[Theorem 2.1]{Spitzer} as
well as Huq in his
dissertation~\cite[Theorem
  2.1.1]{Huq}. All these results may be shown using the following simple
visual idea: if we slightly ``tilt'' the diagram of a lattice path (see
Figure~\ref{figure_infinite_lattice_path}), all 
steps occur at different heights, and the relative order of these heights
may be rotated cyclically by changing the designation of the first step
in the lattice path. This simple idea was perhaps first used by
Raney~\cite[Theorem 2.1]{Raney}, who observed that there is exactly one
rotational equivalent of a sequence of $n+1$ positive units and $n$
negative units in which the partial sums are all positive. A question
naturally arises: which permutations are relative orders of steps
in such tilted pictures of lattice paths? 

In this paper we partially answer this general question in two specific
settings.  Both are related to {\em $k$-Catalan paths}, defined as lattice
paths consisting of unit up steps $(1,1)$ and down steps $(1,-k+1)$
which start and end on the horizontal axis but never go below it. 
The study of the relative
order of all steps leads us to a generalization of some results of Flajolet from
continued fractions to continuants. The study of the relative orders
of the up steps leads us to the discovery of a restricted variant of the
Foata--Strehl group action~\cite{Foata-Strehl1,Foata-Strehl2}.

Our paper is structured as follows. In the Preliminaries we review the
Chung-Feller theorem~\cite{Chung_Feller}, its generalizations by
Spitzer~\cite[Theorem 2.1]{Spitzer} and Huq~\cite[Theorem 2.1.1]{Huq},
and we point out a few connections between the two generalizations. In
Section~\ref{section_Huq_visual} we outline a visual proof of Huq's
results which inspires the definition of the permutations we intend to
study. We introduce {\em Catalan--Spitzer permutations} (and their
$k$-generalizations) in Section~\ref{section_Catalan_Spitzer} as the
relative orders of all steps in a Catalan path. Equivalently, these are obtained
by labeling the steps in reverse lexicographic order 
and listing them in the order they occur along the path. Due to this labeling, a
refined count of Catalan--Spitzer permutations amounts to enumerating
all Catalan paths that have a given number of steps at a certain level. 
For the Catalan paths our formulas may be obtained using Flajolet's
result~\cite[Theorem 1]{Flajolet} which provides a generalized continued 
fraction formula. We generalize these formulas to $k$-Catalan paths by
using continuants instead of continued fractions. In
Section~\ref{section_short_catalan_spitzer} we observe that the relative
order of the up steps alone uniquely determines the Catalan paths. The
resulting {\em short Catalan--Spitzer permutations} may be characterized
in terms of the associated {\em Foata--Strehl trees}, first studied by
Foata and Strehl~\cite{Foata-Strehl1,Foata-Strehl2} who introduced a
group action on the set of all permutations using these ordered $0-1-2$
trees. Finally, in Section~\ref{section_restricted_Foata_Strehl} we
study a restricted variant of the Foata--Strehl group action which takes
each short Catalan--Spitzer permutation into another short
Catalan--Spitzer permutation. The number of orbits on the set of $C_{n}$
permutations is the Catalan number $C_{n-1}$. This is a consequence of a
generating function formula that is applicable to any class of
permutations that is closed under the restricted Foata--Strehl group action. In
particular, for the set of all permutations the number of orbits is the
same as the number of indecomposable permutations.

Our results inspire revisiting three classical topics: generalizations
of the Chung--Feller theorem, Flajolet's continued fraction approach to
lattice path enumeration and the Foata--Strehl group actions. They are
likely the first to connect these three areas.

\section{Preliminaries}

This paper focuses on permutations that are associated to the {\em Chung and
Feller theorem}~\cite{Chung_Feller} and some of its generalizations.  
\begin{theorem}[Chung--Feller]
\label{theorem_Chung_Feller}
Among the lattice paths from $(0,0)$ to $(2n,0)$ consisting of $n$ up
steps $(1,1)$ and $n$ down steps $(1,-1)$, the number of paths having
$2r$ steps above the $x$ axis is the Catalan number
$C_{n}=\frac{1}{n+1}\binom{2n}{n}$, independently of $r$, for each $r\in
\{0,1,\ldots,n\}$.  
\end{theorem}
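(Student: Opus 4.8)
The plan is to fix $n$, write $F_r(n)$ for the number of paths from $(0,0)$ to $(2n,0)$ with exactly $2r$ of their $2n$ steps lying in the closed upper half-plane $y\ge 0$, and prove $F_r(n)=C_n$ for all $r\in\{0,1,\ldots,n\}$ by induction on $n$; the base case $n=0$ is the empty path, so $F_0(0)=1=C_0$. The tool is the classical first-return decomposition. For $n\ge 1$ every such path factors uniquely as a concatenation $A_1A_2\cdots A_m$ of \emph{arches}, where an arch is a minimal nonempty subpath that begins and ends on the $x$-axis; call it \emph{positive} if it stays above the axis and \emph{negative} if it stays below. A positive arch of semilength $a$ has the shape $U\,W\,D$ with $W$ an arbitrary Dyck path of semilength $a-1$, so there are $C_{a-1}$ of them and each one has all $2a$ of its steps above; by reflecting in the $x$-axis, there are likewise $C_{a-1}$ negative arches of semilength $a$, and each of them has $0$ steps above.

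Peeling off the first arch and recording its sign and semilength $a$ then gives, for $n\ge 1$,
\[
F_r(n)\;=\;\sum_{a=1}^{r} C_{a-1}\,F_{r-a}(n-a)\;+\;\sum_{a=1}^{n-r} C_{a-1}\,F_{r}(n-a),
\]
where the first sum handles a positive first arch (which forces $1\le a\le r$ and leaves $2(r-a)$ above-steps in the remaining path of semilength $n-a$) and the second handles a negative first arch (forcing $1\le a\le n-r$ and leaving all $2r$ above-steps in the remainder). Since $a\ge 1$ in both sums, $n-a<n$, so by the induction hypothesis every term $F_{\bullet}(n-a)$ equals $C_{n-a}$, and therefore
\[
F_r(n)\;=\;\sum_{a=1}^{r} C_{a-1}C_{n-a}\;+\;\sum_{a=1}^{n-r} C_{a-1}C_{n-a}.
\]

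To conclude I would invoke the palindromicity of the sequence $a\mapsto C_{a-1}C_{n-a}$ on $\{1,\ldots,n\}$, namely its invariance under $a\mapsto n+1-a$. This reflects the first sum onto $\sum_{a=n+1-r}^{n}C_{a-1}C_{n-a}$, and since $\{1,\ldots,n-r\}$ and $\{n+1-r,\ldots,n\}$ partition $\{1,\ldots,n\}$, the two sums merge into $\sum_{a=1}^{n}C_{a-1}C_{n-a}=C_n$, the Catalan recurrence; this closes the induction. I expect the only fiddly points to be fixing the precise convention for when a \emph{step} (rather than a lattice point) counts as lying above the axis and dealing with the degenerate arches where $W$ is empty; neither is a genuine obstacle. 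In the spirit of the introduction I would also note that the statement admits the alternative Raney-style proof by ``tilting and rotation,'' but the recursion above seems the most economical route for the Preliminaries.
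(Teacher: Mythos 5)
Your argument is correct: the first-return decomposition, the resulting recursion for $F_r(n)$, the application of the induction hypothesis, and the palindromicity of $a\mapsto C_{a-1}C_{n-a}$ under $a\mapsto n+1-a$ all check out, and the two index ranges $\{1,\ldots,n-r\}$ and $\{n+1-r,\ldots,n\}$ do partition $\{1,\ldots,n\}$ so that the Catalan convolution closes the induction. However, this is a genuinely different route from the paper's. The paper states the Chung--Feller theorem as a cited classical result and does not prove it directly; the proof implicit in the paper is the Raney--Spitzer ``tilting and rotation'' argument: one perturbs the step heights so that all partial sums become distinct, observes that the $m$ cyclic shifts of a sequence summing to $1$ (or to $0$ with no vanishing proper cyclic sum) realize each value of the statistic exactly once, and deduces uniformity of the distribution over $r$ in one stroke; Chung--Feller is then the $k=2$ case of Huq's corollary. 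That approach buys generality for free --- it yields Spitzer's theorem for real weights and the Fuss--Catalan version with arbitrary down-step size --- and it explains \emph{why} the distribution is uniform. Your induction is more elementary and self-contained (modulo taking the Dyck-path count $C_m$ as known, which is itself the $r=m$ instance at smaller semilength and hence available from your induction hypothesis), but it verifies uniformity numerically rather than structurally, and it does not extend readily to the $k$-Catalan setting of Corollary~\ref{corollary_Huq}, where the relevant statistic is the number of up steps below the axis and the arch bookkeeping no longer splits so cleanly. One small point worth making explicit if you keep this proof: the count $C_{a-1}$ of arches of semilength $a$ should be justified from the induction hypothesis at semilength $a-1<n$ so that the argument is not circular.
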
  
In the special case when $r=n$ the Chung--Feller theorem implies that
the number of {\em Dyck paths}, that is, lattice paths of the above type
that never go below the $x$ axis from $(0,0)$ to $(2n,0)$ is the Catalan number
$C_n$. This well-known special case has been generalized to {\em $k$-Dyck
  paths} (whose definition may be found in Lemma~\ref{lemma_Raney} below) by
Raney; see~\cite[p.\ 361]{Graham_Knuth_Patashnik}.    
\begin{lemma}[Raney]
\label{lemma_Raney}  
The number of lattice paths from~$(0,0)$ to $(kn,0)$
consisting of $(k-1)n$ up steps $(1,1)$ and $n$ down steps $(1,1-k)$
that never go below the $x$-axis is the {\em Fuss--Catalan number}
\begin{align}
C_{n,k}
& =
\frac{1}{kn+1}\binom{kn+1}{n}
=
\frac{1}{(k-1)n+1}\binom{kn}{n}.
\end{align}  
\end{lemma}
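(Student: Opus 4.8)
The plan is to prove this via the cycle lemma of Dvoretzky and Motzkin, which is the rigorous counterpart of the ``tilting'' idea described in the Introduction and of Raney's rotation argument.

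First I would reduce the count to a count of cyclic words. Prepending an up step $(1,1)$ to a $k$-Dyck path from $(0,0)$ to $(kn,0)$ produces a lattice path of length $kn+1$ with $(k-1)n+1$ up steps $(1,1)$ and $n$ down steps $(1,1-k)$ all of whose partial sums are strictly positive; conversely, any such strictly positive path must begin with an up step (a first step $(1,1-k)$ would give first partial sum $1-k\le 0$), and deleting that first step recovers a $k$-Dyck path. Hence it suffices to count words $a_1a_2\cdots a_{kn+1}$ with each $a_i\in\{1,\,1-k\}$, exactly $n$ of the $a_i$ equal to $1-k$, and all partial sums $a_1+\cdots+a_j$ positive.

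Next I would invoke the cycle lemma: for integers $a_1,\ldots,a_m$ with each $a_i\le 1$ and $\sum_i a_i=s\ge 1$, exactly $s$ of the $m$ cyclic shifts of the word have all partial sums positive. In our situation every letter is at most $1$ and the total sum is $(k-1)n+1+n(1-k)=1$, so exactly one cyclic shift of each such word is strictly positive. Then I would count the orbits of the shift action of $\mathbb{Z}/(kn+1)\mathbb{Z}$ on the set of all $\binom{kn+1}{n}$ words having exactly $n$ down-letters: every orbit has the full size $kn+1$, because a word of period $\ell\mid kn+1$ would contain $n\ell/(kn+1)$ down-letters in one period, forcing $(kn+1)\mid n\ell$, while $\gcd(kn+1,n)=1$ then forces $\ell=kn+1$. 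Therefore the number of orbits is $\binom{kn+1}{n}/(kn+1)$, and since each orbit contains exactly one strictly positive word, the number of $k$-Dyck paths is $\frac{1}{kn+1}\binom{kn+1}{n}$. The equality with $\frac{1}{(k-1)n+1}\binom{kn}{n}$ is the elementary identity $\frac{1}{kn+1}\binom{kn+1}{n}=\frac{(kn)!}{n!\,((k-1)n+1)!}=\frac{1}{(k-1)n+1}\binom{kn}{n}$.

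The main point requiring care is the cycle lemma itself, in case one does not simply cite it: it has a clean proof by rotating the path so that it starts immediately after a lowest vertex and then checking that this rotation is \emph{unique} when $s=1$, with the general case $s\ge 1$ following by induction on $s$ by peeling off a suitable down-letter. An alternative route avoiding the cycle lemma would be to set $F(x)=\sum_{n\ge 0}C_{n,k}x^n$, obtain the functional equation $F=1+xF^k$ from the first-return decomposition of a $k$-Dyck path, and apply Lagrange inversion to extract $[x^n]F$; but the cycle-lemma argument is shorter and fits the geometric spirit of the paper.
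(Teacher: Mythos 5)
Your proof is correct, and it follows essentially the same route the paper takes: the cycle lemma you invoke is exactly the Raney rotation argument that the paper cites (and generalizes via Spitzer's and Huq's theorems), with the same reduction of a $k$-Dyck path to a strictly positive word by prepending an up step and the same $\gcd(kn+1,n)=1$ observation ensuring free orbits. The paper itself only cites the result rather than proving it, so your write-up is if anything more complete; the algebraic identity $\frac{1}{kn+1}\binom{kn+1}{n}=\frac{1}{(k-1)n+1}\binom{kn}{n}$ checks out.
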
  
Huq has generalized Theorem~\ref{theorem_Chung_Feller} to the lattice
paths appearing in Lemma~\ref{lemma_Raney} by proving the following 
result~\cite[Theorem 2.1.1]{Huq}.
\begin{theorem}[Huq]
\label{theorem_Huq}  
Let $(y_{1},\ldots,y_{m})$ be any sequence of integers whose sum is $1$,
Then for each $r\in\{0,1,\ldots, m-1\}$ exactly one of the cyclic shifts
\begin{align*}
(y_{\sigma(1)},y_{\sigma(2)},\ldots,y_{\sigma(m)})
& \in
\{(y_{1}, y_{2}, \ldots, y_{m}),
(y_{2}, \ldots, y_{m}, y_{1}), \ldots,
(y_{m}, y_{1}, \ldots, y_{m-1})\} 
\end{align*}
has the property that exactly $r$ of the partial sums
$y_{\sigma(1)}+y_{\sigma(2)}+\cdots+y_{\sigma(k)}$ for $1 \leq k \leq m$
are positive. 
\end{theorem}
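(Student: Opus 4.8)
The plan is to make precise the ``tilting'' idea from the introduction:
after a small perturbation of the horizontal levels all heights along
the cyclically repeated path become distinct, and one can then read off
how the number of positive partial sums changes as the designated first
step is rotated. Set $s_{0}=0$ and $s_{i}=y_{1}+\cdots+y_{i}$ for
$1\le i\le m$, so $s_{m}=1$, and extend the sequence $m$-periodically by
$y_{i+m}=y_{i}$; then $s_{i+m}=s_{i}+1$ for every integer $i\ge 0$. The
cyclic shift starting the sequence at position $t+1$, for
$t\in\{0,1,\ldots,m-1\}$, has $k$-th partial sum $s_{t+k}-s_{t}$, so the
task is, for each $t$, to count the $k\in\{1,\ldots,m\}$ with
$s_{t+k}>s_{t}$.

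First I would introduce the tilted height function $f(i)=s_{i}-i/m$,
defined for all $i\ge 0$. Since $s_{i+m}-s_{i}=1$, the function $f$ is
$m$-periodic, so its values are recorded by $f(0),f(1),\ldots,f(m-1)$,
and these $m$ numbers are pairwise distinct: $f(i)=f(j)$ with
$0\le i<j\le m-1$ would force the integer $s_{j}-s_{i}$ to equal
$(j-i)/m\in(0,1)$. Let
$\rho(t)=\bigl|\{\,j\in\{0,\ldots,m-1\}:f(j)<f(t)\,\}\bigr|$ be the rank
of $f(t)$ among these values; as $t$ runs over $\{0,\ldots,m-1\}$, the
number $\rho(t)$ runs bijectively over $\{0,\ldots,m-1\}$.

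The key step is that the sign of every partial sum with $1\le k\le m-1$
is governed by $f$ alone:
\begin{equation*}
s_{t+k}-s_{t}>0
\qquad\Longleftrightarrow\qquad
f(t+k)>f(t).
\end{equation*}
This follows at once from $s_{t+k}-s_{t}=\bigl(f(t+k)-f(t)\bigr)+k/m$
and the integrality of $s_{t+k}-s_{t}$: if $s_{t+k}-s_{t}\ge 1$ then
$f(t+k)-f(t)\ge 1-k/m>0$, while if $s_{t+k}-s_{t}\le 0$ then
$f(t+k)-f(t)\le-k/m<0$. By periodicity $f(t+k)=f(\overline{t+k})$, where
$\overline{t+k}\in\{0,\ldots,m-1\}$ is the residue of $t+k$ modulo $m$,
and as $k$ ranges over $\{1,\ldots,m-1\}$ this residue ranges over all
of $\{0,\ldots,m-1\}\setminus\{t\}$. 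Hence the number of positive
partial sums of the shift starting at $t$, counted over $1\le k\le m-1$,
equals $\bigl|\{\,j\in\{0,\ldots,m-1\}:f(j)>f(t)\,\}\bigr|=m-1-\rho(t)$
(and is one larger if the term $k=m$, which equals the total sum $1$ and
is thus always positive, is also counted). Since $t\mapsto\rho(t)$ is a
bijection of $\{0,\ldots,m-1\}$, so is $t\mapsto m-1-\rho(t)$; therefore
as $t$ ranges over the $m$ cyclic shifts the number $m-1-\rho(t)$ of
positive partial sums takes each value in $\{0,1,\ldots,m-1\}$ exactly
once, which is the assertion. The extreme case $\rho(t)=0$ picks out the
unique cyclic shift all of whose partial sums are positive, recovering
Raney's observation from the introduction.

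I do not expect a genuine obstacle here: the entire argument rests on
choosing the right perturbation, namely subtracting the linear term
$i/m$, which simultaneously separates the tilted heights and keeps the
correction $k/m$ of absolute value less than $1$ — too small to change
the sign of an integer-valued partial sum. The only points that need
care are the periodicity bookkeeping (reading $f(t+k)$ as $f$ evaluated
at the residue of $t+k$, and checking that those residues sweep out
$\{0,\ldots,m-1\}\setminus\{t\}$) and keeping track of the automatically
positive last partial sum when matching the count to the index set
$\{0,1,\ldots,m-1\}$.
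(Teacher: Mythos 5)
Your proof is correct and is essentially the paper's argument: the tilted heights $f(i)=s_i-i/m$ are exactly the partial sums $z_i$ of Spitzer's perturbed sequence $x_i=y_i-1/m$, and sorting these pairwise distinct values by rank is precisely how the paper proves Spitzer's theorem and then deduces Huq's. The off-by-one you flag at $k=m$ is a feature of the theorem as stated rather than of your proof (the full sum equals $1$ and is always positive, so the count over $1\le k\le m$ can never be $0$); like the paper's own reduction, which only translates the \emph{shorter} cyclic sums, your argument establishes the intended statement with the partial sums indexed by $1\le k\le m-1$.
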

Huq's proof of Theorem~\ref{theorem_Huq} is a
consequence of the following theorem of Spitzer~\cite[Theorem 2.1]{Spitzer}.  
\begin{theorem}[Spitzer]
Let $(x_{1},x_{2},\ldots,x_{m})\in {\mathbb R}^m$ be any vector with
real coordinates such that 
\begin{align*}
x_{1}+x_{2}+\cdots+x_{m} &= 0
\end{align*}
but no shorter cyclic partial sum
$x_{i+1}+x_{i+2}+\cdots+x_{j} $ of the coordinates vanishes. Then for
each $r\in\{0,1,\ldots,m-1\}$ exactly one 
of the cyclic shifts
\begin{align*}
(x_{\sigma(1)},x_{\sigma(2)},\ldots,x_{\sigma(m)})
& \in
\{(x_{1}, x_{2}, \ldots, x_{m}),
(x_{2}, \ldots, x_{m}, x_{1}), \ldots,
(x_{m}, x_{1}, \ldots, x_{m-1})\} 
\end{align*}
has the property that exactly $r$ of the partial sums
$x_{\sigma(1)}+x_{\sigma(2)}+\cdots+x_{\sigma(k)}$
for $1 \leq k \leq m$ are positive.
\label{theorem_Spitzer}  
\end{theorem}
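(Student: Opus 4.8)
The plan is to realize all $m$ cyclic shifts through the periodic extension of the sequence, and then recognize the number of positive partial sums of a shift as the rank of one partial sum among the others. First I would record the partial sums $S_0 = 0$ and $S_j = x_1 + x_2 + \cdots + x_j$ for $1 \le j \le m$, and extend $(x_i)_{i \ge 1}$ to a bi-infinite sequence indexed by $\mathbb{Z}$ with $x_{i+m} = x_i$; since $S_m = x_1 + \cdots + x_m = 0$, the extended partial sums obey $S_{j+m} = S_j$ for every $j \in \mathbb{Z}$. For $i \in \{0,1,\ldots,m-1\}$ the cyclic shift $(x_{i+1}, x_{i+2}, \ldots, x_{i+m})$ (indices read modulo $m$) has $k$-th partial sum $S_{i+k} - S_i$, which vanishes when $k = m$; hence the number of its positive partial sums equals $\#\{j : i+1 \le j \le i+m,\ S_j > S_i\}$.

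The crucial observation is that $S_0, S_1, \ldots, S_{m-1}$ are pairwise distinct. Indeed, if $S_a = S_b$ for some $0 \le a < b \le m-1$, then $x_{a+1} + x_{a+2} + \cdots + x_b = 0$ is a vanishing cyclic partial sum with fewer than $m$ terms, contradicting the hypothesis (a wrap-around partial sum equals the negative of its complementary block, so it too is excluded). Now set $\rho(i) = \#\{j : 0 \le j \le m-1,\ S_j > S_i\}$. Since the numbers $S_0, \ldots, S_{m-1}$ are distinct, $\rho(i) = \rho(i')$ forces $S_i = S_{i'}$ and hence $i = i'$; as $\rho$ also takes values in $\{0,1,\ldots,m-1\}$, it is a bijection of that set onto itself.

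It remains to tie the two computations together. The indices $j \in \{i+1, i+2, \ldots, i+m\}$ form a complete residue system modulo $m$, and $S_j$ depends only on $j \bmod m$; the unique index in this range with $j \equiv i \pmod m$ is $j = i+m$, for which $S_j = S_i$ and so is not counted. Consequently
\[
\#\{j : i+1 \le j \le i+m,\ S_j > S_i\}
= \#\{j : 0 \le j \le m-1,\ S_j > S_i\}
= \rho(i),
\]
so the $i$-th cyclic shift has exactly $\rho(i)$ positive partial sums. As $i$ runs over $\{0,1,\ldots,m-1\}$ the value $\rho(i)$ runs over $\{0,1,\ldots,m-1\}$ without repetition, which is precisely the assertion. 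This also makes the ``tilting'' picture of the introduction precise: replacing each partial sum $S_j$ by its rank turns the periodic diagram into a genuine permutation, and the cyclic shifts of the sequence correspond to sliding the length-$m$ window of consecutive steps.

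I expect the only delicate point to be the bookkeeping with indices: one must ensure that the terminal partial sum $S_{i+m} = S_i$ of each shift is not counted (so that all the relevant inequalities are strict) and that the hypothesis is invoked so as to rule out every coincidence $S_a = S_b$ with $a \ne b$, wrap-around sums included. Once the distinctness of $S_0, \ldots, S_{m-1}$ is established, everything else is a one-line consequence of the bijectivity of a ranking function.
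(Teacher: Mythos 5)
Your proposal is correct and follows essentially the same route as the paper: introduce the partial sums, express each cyclic partial sum as a difference $S_j-S_i$, and observe that ranking the $m$ pairwise distinct values $S_0,\ldots,S_{m-1}$ assigns each count $r\in\{0,1,\ldots,m-1\}$ to exactly one shift. You are merely more explicit than the paper about the periodic extension, the wrap-around bookkeeping, and why the hypothesis forces the partial sums to be distinct.
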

Indeed, introducing $x_i=y_i-1/m$ for $i=1,2,\ldots,m$, the resulting
vector $(x_{1},x_{2},\ldots,x_{m})\in {\mathbb R}^m$ satisfies the
conditions of Theorem~\ref{theorem_Spitzer} as the sum
$x_1+x_2+\cdots+x_m$ is zero but no shorter partial sum
$x_{i+1}+x_{i+2}+\cdots+x_{j} $ of the coordinates, read
cyclically, is an integer. Furthermore, any
shorter sum $x_{i+1}+\cdots+x_{j}$ is positive if and only if
$y_{i+1}+\cdots+y_{j}-1\geq 0$ holds, since $y_{i+1}+\cdots+y_{j}$ is an
integer and we have  
$y_{i+1}+\cdots+y_{j}-1< x_{i+1}+\cdots+x_{j}< y_{i+1}+\cdots+y_{j}$. 
\begin{proof}[Proof of Theorem~\ref{theorem_Spitzer}]
Introducing
\begin{align}
  z_{i}&= x_{1}+x_{2}+\cdots+ x_{i},
\label{equation_partial_sums}  
\end{align}  
all cyclically consecutive sums may be expressed as 
$x_{i+1}+x_{i+2}+\cdots+x_{j}=z_{j}-z_{i}$.
This is clear when $i\leq j$, and it is easy to prove when $i>j$ using
$z_{m}=0$.  Putting the numbers $z_{1}, z_{2},\ldots,z_{m}$ in
increasing order, for each $0 \leq r \leq m-1$ there is exactly
one $z_{i}$, the $(r+1)$st largest number, for which exactly $r$ of the
differences $z_{j}-z_{i}$ are positive.  
\end{proof}
As observed by Huq~\cite[Corollary 5.1.2]{Huq},
Theorem~\ref{theorem_Huq} has the following consequence.
\begin{corollary}[Huq]
\label{corollary_Huq}  
The number of lattice paths from~$(0,0)$ to $(kn,0)$
consisting of $(k-1)n$ up steps $(1,1)$ and $n$ down steps $(1,1-k)$
with exactly $r$ up steps below the $x$-axis is independent of $r$ for
$r\in \{0,1,\ldots,(k-1)n\}$ and is given by the Fuss--Catalan number
$C_{n,k}$. 
\end{corollary}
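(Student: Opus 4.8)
The plan is to deduce the corollary from Huq's theorem (Theorem~\ref{theorem_Huq}) by the standard cycle-lemma bookkeeping that relates lattice paths to sequences of steps up to cyclic rotation, exactly as Huq does in \cite[Corollary 5.1.2]{Huq}. First I would encode a lattice path from $(0,0)$ to $(kn,0)$ with $(k-1)n$ up steps $(1,1)$ and $n$ down steps $(1,1-k)$ by the sequence $(y_1,\ldots,y_{kn})$ of its step increments, where each $y_i\in\{1,1-k\}$ and $\sum_i y_i=0$. To apply Theorem~\ref{theorem_Huq} I would instead consider sequences whose sum is $1$; the natural fix is to append a single up step, i.e.\ work with the $kn+1$-term sequence obtained from a \emph{primitive} (returning to the axis only at the end) $k$-Catalan-type path, or equivalently to set up a bijection between our paths and sequences of $(k-1)n$ ones and $n$ copies of $1-k$ together with a marked starting position. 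I expect the cleanest route is: there are $\binom{kn}{n}$ sequences $(y_1,\ldots,y_{kn})$ with $n$ down steps, each such sequence has $kn$ cyclic shifts, and the map from (sequence, cyclic shift) pairs to lattice paths is compatible with the ``number of up steps above/below the axis'' statistic once we account for the marked basepoint.

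Next I would make precise how the ``$r$ up steps below the $x$-axis'' statistic behaves under cyclic rotation. Reading a sequence $(y_1,\ldots,y_m)$ with $\sum y_i=1$, call up step $i$ \emph{good} if the partial sum $y_{\sigma(1)}+\cdots+y_{\sigma(i)}$ is positive (equivalently, if in the tilted picture this step ends strictly above the axis) and \emph{bad} otherwise; Theorem~\ref{theorem_Huq} says that for each $r\in\{0,1,\ldots,m-1\}$ exactly one cyclic shift has exactly $r$ positive partial sums. The technical content is to translate ``$r$ positive partial sums among all $m$ steps'' into ``$r$ up steps strictly above the axis (equivalently $(k-1)n-r$ up steps on or below the axis, equivalently $(k-1)n-(r)$ something) for the lattice path'' — and to verify the bijection converts a sequence with sum $1$ and a chosen rotation into an honest $k$-Catalan path (sum $0$) with the down steps landing correctly. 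I would set this up by the usual trick: insert an extra initial up step, so that a sequence of sum $1$ with $r$ positive partial sums corresponds to a length-$(kn+1)$ sequence of sum $0$, then cancel that inserted step to recover a path from $(0,0)$ to $(kn,0)$, checking that the count of up steps below the axis is precisely $(k-1)n-r$ (or $r$, after reindexing $r\leftrightarrow (k-1)n-r$).

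Finally, I would assemble the count: Theorem~\ref{theorem_Huq} partitions the $kn$ cyclic shifts of a fixed sequence so that each value $r\in\{0,\ldots,kn-1\}$ of ``number of positive partial sums'' is realized exactly once; restricting to shifts whose \emph{first} step is an up step (there are $(k-1)n$ such shifts, since the sequence has $(k-1)n$ up steps) and interpreting each as a lattice path via the construction above shows that the number of up steps below the axis takes each value in $\{0,1,\ldots,(k-1)n-1\}$ exactly once, and the remaining case (all up steps above or on the axis, the genuine $k$-Catalan path) accounts for $r=(k-1)n$ — wait, I should double-check the endpoints here, which is exactly the delicate point. Summing over all $\binom{kn}{n}/(\text{overcounting by rotations})$ ``necklaces'' — or more carefully, counting (path, marked up step) pairs in two ways — yields that the number of lattice paths with exactly $r$ up steps below the axis equals $\frac{1}{(k-1)n+1}\binom{kn}{n}=C_{n,k}$, independent of $r$, using Lemma~\ref{lemma_Raney} for the $r=0$ case as a sanity check. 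The main obstacle I anticipate is precisely the bookkeeping of the marked step and the $\pm 1$ shift in $r$: ensuring the bijection between rotated sequences of sum $1$ and $k$-Catalan-type paths is exactly measure-preserving for the ``up steps below the axis'' statistic, with the right handling of steps that land \emph{on} the axis versus strictly below it.
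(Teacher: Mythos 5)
Your overall architecture --- prepend an up step to obtain a sum-$1$ sequence of length $kn+1$, partition such sequences into cyclic classes of size $kn+1$, and show that within each class the shifts beginning with an up step realize each value of the statistic exactly once, so that the answer is the number of classes $\tfrac{1}{kn+1}\binom{kn+1}{n}=C_{n,k}$ --- is the right one and is essentially the paper's. But the step you yourself flag as ``exactly the delicate point'' is a genuine gap, and it cannot be closed by careful bookkeeping: the statistic in Theorem~\ref{theorem_Huq} (number of positive partial sums among all $m$ steps) is \emph{not} interconvertible with the number of up steps below the axis. For $k=2$, $n=2$, the shift $UDDUU$ (heights $1,0,-1,0,1$) has $2$ positive partial sums and de-augments to $DDUU$, which has $2$ up steps below the axis; the shift $UDUDU$ (heights $1,0,1,0,1$) has $3$ positive partial sums and de-augments to $DUDU$, which also has $2$ up steps below the axis. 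So neither statistic is a function of the other, and no reindexing $r\leftrightarrow f(r)$ makes the \emph{conclusion} of Theorem~\ref{theorem_Huq} transfer. What does work is to rerun the \emph{method of proof} (the Spitzer ordering of Section~\ref{section_Huq_visual}) restricted to the upper endpoints of the up steps: the functional $F(u,v)=v-u/(kn+1)$ is invariant under the period translation $(u,v)\mapsto(u+kn+1,v+1)$ and takes distinct values on the $(k-1)n+1$ up-step endpoints per period; for $i>1$ one has $F(i,z_i)<F(1,z_1)$ if and only if $z_i\le z_1$, which is exactly the condition that the corresponding up step of the de-augmented path ends at height $\le 0$. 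Hence choosing the initial up step to be the one whose endpoint is $(r+1)$st smallest in the $F$-order produces exactly $r$ up steps below the axis, for each $r\in\{0,1,\ldots,(k-1)n\}$ once per class.

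Two smaller corrections. First, the augmented sequence has $(k-1)n+1$ up steps, so each cyclic class contains $(k-1)n+1$ shifts beginning with an up step (you wrote $(k-1)n$); this is precisely what matches the $(k-1)n+1$ admissible values of $r$, and your proposed split into ``$(k-1)n$ shifts plus one leftover genuine Catalan path'' is not needed. Second, you should fix the convention once and for all: an up step is ``below the axis'' exactly when its upper endpoint has height $\le 0$ (equivalently, its lower endpoint has height $<0$); with the strict convention the distribution is not uniform.
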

\vanish{
Indeed, let us associate to each lattice path~$p$ from~$(0,0)$
to $(kn,0)$ the augmented path~$p'$ from~$(0,0)$
to $(kn+1,1)$ obtained by prepending an initial up step $(1,1)$ to
$p$. The operation $p \longmapsto p'$ is a bijection between the set of all
lattice paths ${\mathcal P}(n,k,0)$ from~$(0,0)$ to $(kn,0)$ consisting
of $(k-1)n$ up steps $(1,1)$ and $n$ down steps $(1,1-k)$, and the set
${\mathcal P}(n,k,1)$ of all those lattice paths  from~$(0,0)$ to
$(kn+1,1)$, consisting of $(k-1)n+1$ up steps $(1,1)$ and $n$ down steps
$(1,1-k)$ that begin with an up step. Note that ${\mathcal P}(n,k,1)$ is
a subset of~${\mathcal Q}(n,k,1)$ the set of all lattice paths from~$(0,0)$
to $(kn+1,1)$ consisting of $(k-1)n+1$ up steps $(1,1)$ and $n$ down steps
$(1,1-k)$. If, however, we partition~${\mathcal Q}(n,k,1)$ into cyclic
shift equivalence classes, each such equivalence class contains exactly
$(k-1)n+1$ elements of ${\mathcal P}(n,k,1)$. Theorem~\ref{theorem_Huq} 
says that the number of lattice paths $p'$ with $r$ up steps below the
$x$ axis is the same in each equivalence class. }

\begin{remark}
{\em The special instance of Spitzer's theorem when $j=m-1$ is
  often called Spitzer's lemma; see~\cite[Lemma 10.4.3]{Krattenthaler}.
  The special instance of Corollary~\ref{corollary_Huq} when $j=m-1$ is also
a special case of Raney's theorem~\cite[Theorem 2.1]{Raney};
see~\cite[p.\ 359]{Graham_Knuth_Patashnik}.
}
\end{remark}  

As noted, Theorem~\ref{theorem_Huq} above
is a consequence of Spitzer's theorem~\ref{theorem_Spitzer}, but the
converse is also true.
\begin{proposition} Spitzer's theorem~\ref{theorem_Spitzer} is a
  consequence of Huq's theorem~\ref{theorem_Huq}. 
\end{proposition}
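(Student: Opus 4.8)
The plan is to derive Spitzer's theorem for a vector $(x_1,\ldots,x_m)\in\Rrr^m$ with $\sum x_i=0$ and no vanishing cyclic partial sum from Huq's combinatorial statement about integer sequences summing to $1$. The first step is to replace the real coordinates $x_i$ by integers $y_i$ in a way that preserves all the relevant sign data of cyclic partial sums. Since there are only finitely many cyclic partial sums $s_{i,j}=x_{i+1}+\cdots+x_j$, and none of them is zero, we can choose a positive integer $N$ large enough (and perturb slightly if needed) so that the vector $y_i := N x_i + c$, for a suitable uniform shift, has two properties: first, $y_1+\cdots+y_m = 1$; second, for every cyclically consecutive block, $y_{i+1}+\cdots+y_j > 0$ if and only if $x_{i+1}+\cdots+x_j>0$. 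Concretely, one mimics the excerpt's own passage between Spitzer and Huq in reverse: there $x_i=y_i-1/m$; here one wants integers, so one scales by $m$ and rounds. The key point to check is that the strict inequalities defining "positive partial sum" for the $x_i$ survive the discretization; this is where the hypothesis that \emph{no} cyclic partial sum of the $x_i$ vanishes is essential, as it gives a uniform gap bounding the allowed rounding error.

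Once such an integer sequence $(y_1,\ldots,y_m)$ with $\sum y_i = 1$ is in hand, one applies Huq's theorem~\ref{theorem_Huq} directly: for each $r\in\{0,1,\ldots,m-1\}$ there is exactly one cyclic shift $(y_{\sigma(1)},\ldots,y_{\sigma(m)})$ with exactly $r$ positive partial sums $y_{\sigma(1)}+\cdots+y_{\sigma(k)}$, $1\le k\le m$. Because the discretization was chosen to preserve, block by block, which cyclic partial sums are positive, the same cyclic shift $\sigma$ produces exactly $r$ positive partial sums $x_{\sigma(1)}+\cdots+x_{\sigma(k)}$, and conversely; hence the count of positive partial sums of a cyclic shift of $(x_i)$ equals that of the corresponding cyclic shift of $(y_i)$. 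Transporting Huq's "exactly one shift for each $r$" conclusion through this correspondence yields precisely Spitzer's theorem.

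The main obstacle is the first step: producing the integer sequence while faithfully preserving the sign of every cyclic partial sum \emph{and} hitting the exact sum $1$. These two requirements pull in slightly different directions — fixing the sum to $1$ constrains the shift, and an arbitrary shift need not keep all partial sums off the relevant thresholds. The clean way around this is to work with the partial sums $z_i = x_1+\cdots+x_i$ (as in the proof of Theorem~\ref{theorem_Spitzer}): a cyclic partial sum is positive iff $z_j - z_i > 0$, so it suffices to perturb the $z_i$ to values $\widetilde z_i$ whose pairwise order agrees with that of the $z_i$, with $\widetilde z_m$ chosen appropriately, and then set $y_i = \widetilde z_i - \widetilde z_{i-1}$. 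Since the $z_i$ are distinct (a coincidence $z_i = z_j$ would give a vanishing cyclic sum), there is room to choose such $\widetilde z_i$ with the $y_i$ integers summing to $1$; this reduces the whole matter to a one-dimensional order-preservation argument, which is routine.
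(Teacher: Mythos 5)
Your final construction (the third paragraph) is correct and effects the same overall reduction as the paper --- replace $(x_1,\ldots,x_m)$ by an integer vector $(y_1,\ldots,y_m)$ with $y_1+\cdots+y_m=1$ whose shorter cyclic partial sums have the same signs as those of the $x_i$, then invoke Theorem~\ref{theorem_Huq} --- but you carry out the discretization differently. The paper perturbs $(x_1,\ldots,x_m)$ inside the open subset of the hyperplane $x_1+\cdots+x_m=0$ cut out by the sign conditions $\varepsilon_{i,j}\cdot(x_{i+1}+\cdots+x_j)>0$, using the density of rational points of the special form $x_i=(m y_i-1)/(mk)$; you instead replace the partial sums $z_i$ by order-isomorphic integers $\widetilde z_i$ (for instance their ranks, shifted so that $\widetilde z_m=1$) and set $y_i=\widetilde z_i-\widetilde z_{i-1}$ with $\widetilde z_0=0$. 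Your route is more explicit and avoids the density/perturbation argument altogether, while the paper's route keeps the $y_i$ numerically close to the $x_i$, which fits the tilted-lattice-path picture of Section~\ref{section_Huq_visual}. One step you label ``routine'' deserves a sentence: since $\sum y_i=1$ while $\sum x_i=0$, a wrap-around cyclic sum of the $y_i$ equals $\widetilde z_j-\widetilde z_i+1$, not $\widetilde z_j-\widetilde z_i$, so sign preservation is not literally order preservation of the $\widetilde z_i$; it holds because a positive integer means $\geq 1$ and the $\widetilde z_i$ are distinct, whence $\widetilde z_j-\widetilde z_i+1\geq 1$ iff $\widetilde z_j>\widetilde z_i$ iff $z_j>z_i$. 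This is exactly the $\pm 1$ bookkeeping the paper performs via $y_{i+1}+\cdots+y_j-1\leq k\cdot(x_{i+1}+\cdots+x_j)<y_{i+1}+\cdots+y_j$. Your first two paragraphs (the ``scale, shift and round'' attempt) are rightly abandoned; only the partial-sum construction is needed.
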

\begin{proof}
Assume that the sum of all coordinates
$(x_1,\ldots,x_m)\in {\mathbb R}^m$ is zero, and each of the shorter
cyclically consecutive sum of terms $x_{i+1}+\cdots+x_{j}$ is nonzero
and has sign $\varepsilon_{i,j}$. We may assume that there is a 
fixed positive integer $k>0$ such that all numbers
$x_i$ are rational of the special form
\begin{align}
x_i=\frac{m\cdot y_i-1}{m\cdot k} \text{ for some } y_i\in {\mathbb Z}.
\label{equation_special_form}
\end{align}
Indeed, we may perturb the coordinates of $(x_1,\ldots,x_m)$ as long as
they satisfy all $m(m-1)-2$ inequalities of the form
\begin{equation}
  \varepsilon_{i,j}\cdot (x_{i+1}+\cdots+x_j)>0,
\label{equation_polytope}
\end{equation}
together with the equation
\begin{equation}
  x_1+\cdots+x_m=0
  \label{equation_hyperplane}
\end{equation}  
in ${\mathbb R}^m$. The inequalities~\eqref{equation_special_form}
define an open subset of the hyperplane defined
by~\eqref{equation_hyperplane}. This subset is not empty as it contains
the vector we began with. Points whose coordinates are of the 
form given in~\eqref{equation_special_form} form a dense subset in the
hyperplane defined by~\eqref{equation_hyperplane}, hence we 
may replace $(x_1,\ldots,x_m)$ with a vector whose coordinates
are of the form given in~\eqref{equation_special_form} and that satisfy
the same inequalities. Similarly to the other implication, 
Theorem~\ref{theorem_Spitzer} now follows from
Theorem~\ref{theorem_Huq} after observing that each
shorter sum $x_{i+1}+\cdots+x_{j}$ satisfies the inequality 
$y_{i+1}+\cdots+y_{j}-1\leq k\cdot (x_{i+1}+\cdots+x_{j})<
y_{i+1}+\cdots+y_{j}$.
\end{proof}

\section{A lattice path visualization of Huq's result}
\label{section_Huq_visual}

\begin{figure}
\begin{center}
\newcommand{\xx}{1.0}
\newcommand{\cc}{0.16}
\tikzstyle{place}=[circle,draw=black,fill=black,thick,
                   inner sep=0pt,minimum size=1.5mm]
\tikzstyle{transition}=[rectangle,draw=black,fill=black,thick,
                        inner sep=0pt,minimum size=1.5mm]
\begin{tikzpicture}
\node (a) at ({0*\xx},{0*\xx}) [transition] {};
\node (b) at ({1*\xx},{3*\xx}) [place] {};
\node (c) at ({2*\xx},{1*\xx}) [place] {};
\node (d) at ({3*\xx},{2*\xx}) [place] {};
\node (e) at ({4*\xx},{-1*\xx}) [place] {};
\node (f) at ({5*\xx},{1*\xx}) [place] {};
\node (g) at ({6*\xx},{2*\xx}) [place] {};
\node (h) at ({7*\xx},{1*\xx}) [transition] {};

\draw[-,thick] (a) -- (b) -- (c) -- (d) -- (e) -- (f) -- (g) -- (h);

\node [below] at (a) {\small $(0,0)$};
\node [above] at (b) {\small $(1,3)$};
\node [below] at (c) {\small $(2,1)$};
\node [above] at (d) {\small $(3,2)$};
\node [below] at (e) {\small $(4,-1)$};
\node [left] at (f) {\small $(5,1)$};
\node [above] at (g) {\small $(6,2)$};
\node [below] at (h) {\small $(7,1)$};

\end{tikzpicture}
\end{center}
\caption{The lattice path associated with the vector
$\mathbf{y} =  (3,-2,1,-3,2,1,-1)$.}
\label{figure_lattice_path}
\end{figure}
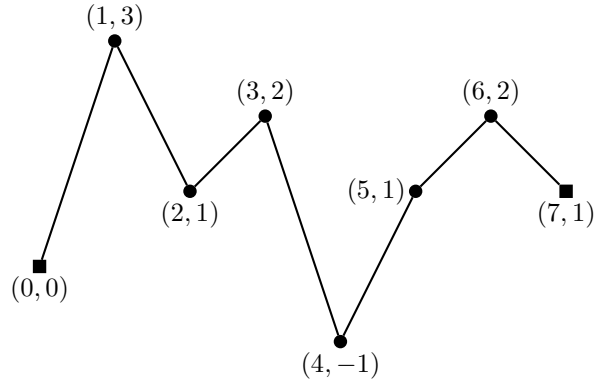

In the spirit of Krattenthaler~\cite[Remark 10.4.4]{Krattenthaler} and also
of Graham, Knuth and Patashnik~\cite[p.\ 360]{Graham_Knuth_Patashnik},
we may visualize a self contained proof of Theorem~\ref{theorem_Huq},
using lattice paths, as follows.
This visualization makes
the result and its proof a generalization of Raney's
lemma~\ref{lemma_Raney} and its geometric proof given
in~\cite[p.\ 359--360]{Graham_Knuth_Patashnik}. If we generalize
the notion of lattice paths to connect vertices with non-integer second
coordinates, our visualization also includes the proof of
Theorem~\ref{theorem_Spitzer}.

Let us extend  the vector $\mathbf{y}$ to an infinite vector $(\ldots,
y_{-1}, y_{0}, y_{1}, y_{2}, \ldots)$ 
by setting $y_{i} = y_{j}$ for $i \equiv j \bmod m$, and consider the
associated infinite lattice path with steps $\ldots, (1,y_{-1}),
(1,y_{0}), (1,y_{1}), (1,y_{2}),\ldots$, containing the lattice point 
$(0,v_{0}) = (0,0)$ and satisfying 
$(i+1,v_{i+1}) = (i,v_{i}) + (1,y_{i+1})$
for all integers $i$; see Figure~\ref{figure_infinite_lattice_path}.
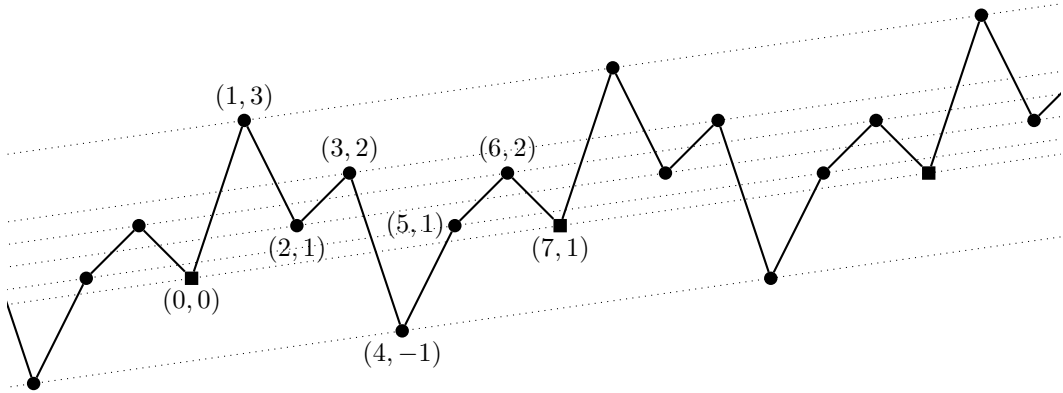
\begin{figure}[h]
\begin{center}
\newcommand{\xx}{0.7}
\newcommand{\cc}{0.16}
\tikzstyle{place}=[circle,draw=black,fill=black,thick,
                   inner sep=0pt,minimum size=1.5mm]
\tikzstyle{transition}=[rectangle,draw=black,fill=black,thick,
                        inner sep=0pt,minimum size=1.5mm]
\begin{tikzpicture}
\clip ({-3.5*\xx},{-2.5*\xx}) rectangle ({16.5*\xx},{5.5*\xx});
\node (ww) at ({-4*\xx},{1*\xx}) [place] {};
\node (xx) at ({-3*\xx},{-2*\xx}) [place] {};
\node (yy) at ({-2*\xx},{0*\xx}) [place] {};
\node (zz) at ({-1*\xx},{1*\xx}) [place] {};
\node (a) at ({0*\xx},{0*\xx}) [transition] {};
\node (b) at ({1*\xx},{3*\xx}) [place] {};
\node (c) at ({2*\xx},{1*\xx}) [place] {};
\node (d) at ({3*\xx},{2*\xx}) [place] {};
\node (e) at ({4*\xx},{-1*\xx}) [place] {};
\node (f) at ({5*\xx},{1*\xx}) [place] {};
\node (g) at ({6*\xx},{2*\xx}) [place] {};
\node (h) at ({7*\xx},{1*\xx}) [transition] {};
\node (i) at ({8*\xx},{4*\xx}) [place] {};
\node (j) at ({9*\xx},{2*\xx}) [place] {};
\node (k) at ({10*\xx},{3*\xx}) [place] {};
\node (l) at ({11*\xx},{0*\xx}) [place] {};
\node (m) at ({12*\xx},{2*\xx}) [place] {};
\node (n) at ({13*\xx},{3*\xx}) [place] {};
\node (o) at ({14*\xx},{2*\xx}) [transition] {};
\node (p) at ({15*\xx},{5*\xx}) [place] {};
\node (q) at ({16*\xx},{3*\xx}) [place] {};
\node (r) at ({17*\xx},{4*\xx}) [place] {};
\node (s) at ({18*\xx},{1*\xx}) [place] {};

\draw[-,thick] (ww) -- (xx) -- (yy) -- (zz) -- (a) -- (b) -- (c) -- (d) -- (e) -- (f) -- (g) -- (h)
-- (i) -- (j) -- (k) -- (l) -- (m) -- (n) -- (o) -- (p) -- (q) -- (r) -- (s);

\node [below] at (a) {\small $(0,0)$};
\node [above] at (b) {\small $(1,3)$};
\node [below] at (c) {\small $(2,1)$};
\node [above] at (d) {\small $(3,2)$};
\node [below] at (e) {\small $(4,-1)$};
\node [left] at (f) {\small $(5,1)$};
\node [above] at (g) {\small $(6,2)$};
\node [below] at (h) {\small $(7,1)$};

\draw[-, dotted] ({-7*\xx},{-1*\xx}) -- ({21*\xx},{3*\xx});
\draw[-, dotted] ({-6*\xx},{2*\xx}) -- ({22*\xx},{6*\xx});
\draw[-, dotted] ({-5*\xx},{0*\xx}) -- ({23*\xx},{4*\xx});
\draw[-, dotted] ({-11*\xx},{0*\xx}) -- ({24*\xx},{5*\xx});
\draw[-, dotted] ({-10*\xx},{-3*\xx}) -- ({25*\xx},{2*\xx});
\draw[-, dotted] ({-9*\xx},{-1*\xx}) -- ({26*\xx},{4*\xx});
\draw[-, dotted] ({-8*\xx},{0*\xx}) -- ({27*\xx},{5*\xx});

\end{tikzpicture}
\end{center}
\caption{The infinite lattice path associated to
the lattice path in Figure~\ref{figure_lattice_path}.
The dotted lines are the lines where the
functional $F(u,v) = v - 1/m \cdot u$ is constant.
The functional $F$ yields the linear order
$(4,-1)$, $(0,0)$, $(5,1)$, $(2,1)$, $(6,2)$, $(3,2)$ and $(1,3)$
on the points on the path.
}
\label{figure_infinite_lattice_path}
\end{figure}
The finite path $p_{i}$ occurs in the infinite path
as a subpath starting at $(i, v_{i})$ and ending at $(i+m, v_{i}+1)$.
Introducing $x_{i}=y_{i}-1/m$ and ordering the $z_{i}$s defined
in~\eqref{equation_partial_sums} amounts to the following. Consider the linear
functional $F(u,v) = v - 1/m\cdot u$ defined on the plane, and consider
its level curves which are lines with slope $1/m$. For any $i\in
\{1,2,\ldots, m\}$ we have $z_{i}=F(i,v_{i})$ and we may extend this
observation to all $i\in {\mathbb Z}$ keeping in mind that $z_{m}=0$. Thus
we may set $z_{i}=z_{j}$ if $i \equiv j \bmod m$. 
Ordering $z_{1}, z_{2},\ldots, z_{m}=z_{0}$ in increasing order amounts to
ordering  the $m$ lattice points $(0,v_{0})$ through $(m-1,v_{m-1})$
according to the linear functional $F$.  If $(i,v_{i})$
is the $(r+1)$st largest lattice point in this order, then there are
exactly $r$ lattice points $(i,v_{i})$ above this level.

\section{Catalan--Spitzer permutations}
\label{section_Catalan_Spitzer}

In this section we investigate the restriction of 
Theorem~\ref{theorem_Huq} and its proof to $k$-Catalan paths. In particular, we
describe the permutations of partial sums that appear in the proof of
Theorem~\ref{theorem_Huq}, when we prove it by reducing it to Spitzer's
theorem~\ref{theorem_Spitzer}. 

We define an {\em augmented} $k$-Catalan path of order $n$ as a lattice
path consisting of $(k-1)n+1$ up steps $(1,1)$ and $n$ down steps
$(1,-k+1)$ that begins with an up step and never goes below the line
$y=1$ after the initial up step. The sum of the second coordinates of
these steps is $1$, hence Theorem~\ref{theorem_Huq} is applicable.
In this special case the proof of this theorem calls for
replacing each step $(1,y)$ by $(1,y-1/(kn+1))$. Hence up 
steps become $(1,kn/(kn+1))$ and 
down steps become $(1,-k((k-1)n + 1)/(kn + 1))$ and the transformed
path goes from $(0,0)$ to $(kn+1,0)$. Between its endpoints it
remains strictly above the line $y=0$. The transformed path is not a
lattice path, but we can easily transform it to one by multiplying all
$y$-coordinates by the factor $(kn+1)/k$. This vertical stretch does not
change the relative vertical order of the $y$-coordinates of the
endpoints of the steps.

\begin{definition}
A {\em $k$-Catalan--Spitzer path of order $n$} is a lattice path consisting of
$(k-1)n+1$ up steps $(1,n)$ and $n$ down steps $(1,-((k-1)n + 1)$ from $(0,0)$
to $(kn+1,0)$ that remains strictly above the line $y=0$. 
\end{definition}

There is a natural bijection between augmented $k$-Catalan paths
and $k$-Catalan--Spitzer paths of order $n$: we associate 
to each augmented $k$-Catalan path $(0,0),(1,z_{1}),\ldots, (kn,
z_{kn}), (kn+1,1)$ the $k$-Catalan--Spitzer path
$(0,0),(1,z_{1}'),\ldots, (kn, z_{kn}'), (kn+1,0)$ in which up steps and
down steps follow in the same order. Hence the number of
$k$-Catalan--Spitzer paths of order $n$ is also the
Fuss-Catalan number $C_{n,k}$. The second coordinates $z_i'$ of the
lattice points in a  $k$-Catalan--Spitzer path pairwise differ and
may be easily computed from the second coordinates $z_i$ of the
corresponding augmented $k$-Catalan path as follows. Since a
$k$-Catalan--Spitzer path is obtained from the corresponding
augmented $k$-Catalan path by first decreasing the second coordinate of each
step by $1/(kn+1)$ and then performing a vertical stretch by a factor of
$(kn+1)/k$, we obtain that
\begin{align}
z_{i}'=\frac{(kn+1)\cdot z_{i}- i}{k} \text{ holds for } i=1,2,\ldots,
kn.
\label{equation_ztransform}
\end{align}
The next proposition
describes the relative position of these lattice points in a
$k$-Catalan--Spitzer path in  terms of the positions of lattice points in
the corresponding augmented $k$-Catalan path. 
\begin{proposition}
Consider an augmented $k$-Catalan path
$(0,0),(1,z_{1}),\ldots, (kn, z_{kn}), (kn+1,1)$ of order $n$, and
let $(0,0),(1,z_{1}'),\ldots, (kn, z_{kn}'), (kn+1,0)$ be the
corresponding $k$-Catalan--Spitzer path. Then for some $i\neq j$ the
inequality $z_{i}'<z_{j}'$ holds if and only if $(-i,z_{i}) < (-j, z_{j})$
in the reverse lexicographic order, where coordinates are compared right
to left. 
\label{proposition_revlexorder}
\end{proposition}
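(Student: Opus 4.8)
The plan is to work directly from the explicit formula~\eqref{equation_ztransform}, which expresses $z_i'$ as $((kn+1)z_i - i)/k$. Since $k>0$ is a positive constant, the inequality $z_i' < z_j'$ is equivalent to $(kn+1)z_i - i < (kn+1)z_j - j$, i.e.\ to $(kn+1)(z_i - z_j) < i - j$. The right-hand side $i-j$ is an integer with $|i-j| \le kn-1 < kn+1$, so the strict inequality is controlled entirely by the sign of $z_i - z_j$ together with a tie-breaking rule when $z_i = z_j$. First I would dispose of the case $z_i \neq z_j$: since $z_i, z_j$ are integers (lattice point heights), $z_i - z_j$ is a nonzero integer, so $(kn+1)|z_i - z_j| \ge kn+1 > |i-j|$, which forces the sign of $(kn+1)(z_i-z_j) - (i-j)$ to agree with the sign of $z_i - z_j$. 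Hence when $z_i \neq z_j$ we have $z_i' < z_j'$ iff $z_i < z_j$, which is exactly the condition that $(-i, z_i) < (-j, z_j)$ in reverse lexicographic order (the $z$-coordinate being the last, dominant, coordinate).

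Next I would handle the tie case $z_i = z_j$. Here $(kn+1)(z_i - z_j) = 0$, so $z_i' < z_j'$ iff $-i < -j$, i.e.\ iff $i > j$. In reverse lexicographic order with coordinates compared right to left, equal last coordinates $z_i = z_j$ send the comparison to the first coordinate, where $(-i, z_i) < (-j, z_j)$ iff $-i < -j$ iff $i > j$. So again the two conditions coincide. Combining the two cases gives the claimed equivalence for all $i \neq j$. I should also note at the outset that the $z_i'$ are pairwise distinct — this is already asserted in the text preceding the proposition and also follows from the computation, since $z_i' = z_j'$ would force both $z_i = z_j$ and $i = j$.

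The only genuinely delicate point — and the step I expect to need the most care — is making sure the integrality hypotheses are correctly invoked: $z_i$ is an integer because the augmented $k$-Catalan path is a genuine lattice path, and $i$ ranges over $\{1, \ldots, kn\}$ so that $|i - j| \le kn - 1$ strictly beats $kn+1$. One must be slightly careful that the inequality $|i-j| < kn+1$ is strict even in the extreme case $\{i,j\} = \{1, kn\}$, where $|i-j| = kn-1$; this is fine. No appeal to Theorem~\ref{theorem_Huq} or Spitzer's theorem is needed for the proposition itself — it is a purely arithmetic consequence of the change of variables. I would write the argument as a short two-case split on whether $z_i = z_j$, each case being a one-line inequality estimate, and then state the conclusion.

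\begin{proof}
By~\eqref{equation_ztransform} and since $k>0$, the inequality $z_i' < z_j'$ is equivalent to $(kn+1)\cdot(z_i - z_j) < i - j$. Since the augmented $k$-Catalan path is a lattice path, $z_i$ and $z_j$ are integers, and since $i,j\in\{1,2,\ldots,kn\}$ we have $|i-j|\le kn-1 < kn+1$.

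Suppose first $z_i\neq z_j$. Then $z_i - z_j$ is a nonzero integer, so $(kn+1)\cdot|z_i - z_j|\ge kn+1 > |i-j|$, which forces the sign of $(kn+1)\cdot(z_i-z_j) - (i-j)$ to agree with the sign of $z_i - z_j$. Hence $z_i' < z_j'$ holds exactly when $z_i < z_j$. In the reverse lexicographic order, in which the second (height) coordinate dominates, $(-i,z_i) < (-j,z_j)$ holds exactly when $z_i < z_j$, so the two conditions coincide in this case.

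Suppose next $z_i = z_j$. Then $z_i' < z_j'$ is equivalent to $-i < -j$, that is, to $i>j$. In the reverse lexicographic order, equal height coordinates send the comparison to the first coordinate, so $(-i,z_i) < (-j,z_j)$ holds exactly when $-i < -j$, again matching. In particular $z_i' = z_j'$ would force $z_i = z_j$ and $i = j$, so the $z_i'$ are pairwise distinct, consistent with the discussion preceding the proposition.

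Combining the two cases, for all $i\neq j$ we have $z_i' < z_j'$ if and only if $(-i,z_i) < (-j,z_j)$ in the reverse lexicographic order.
\end{proof}
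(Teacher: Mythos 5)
Your proof is correct and follows essentially the same route as the paper: apply the formula \eqref{equation_ztransform}, split into the cases $z_i\neq z_j$ and $z_i=z_j$, and use the integrality of $z_i-z_j$ together with the bound $|i-j|<kn+1$ to compare signs. No gaps.
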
  
\begin{proof}
By~\eqref{equation_ztransform} we have 
\begin{align}
z_{j}'-z_{i}'
&=\frac{(kn+1)\cdot (z_{j}-z_{i})+(i-j)}{k}.
\label{equation_zij}
\end{align}
Observe first that in the case when $z_{i}\neq z_{j}$, the sign of
$z_{j}'-z_{i}'$ is the same as the sign of $z_{j}-z_{i}$. Indeed,
$(kn+1)\cdot (z_{j}-z_{i})$ in~\eqref{equation_zij} above is a nonzero
integer multiple of $(kn+1)$, whereas $|i-j|<kn$.  
On the other hand, in the case when $z_{i}= z_{j}$,
by~\eqref{equation_zij} we have $z_{i}'<z_{j}'$ if 
and only if $-i< -j$ holds. 
\end{proof}
Proposition~\ref{proposition_revlexorder} inspires the following
definition.

\begin{definition}  
\label{definition_Catalan_Spitzer_permutation}
A {\em $k$-Catalan--Spitzer permutation of order $n$} is the relative
order of the numbers $z_{1}',\ldots,z_{kn}'$ in a $k$-Catalan--Spitzer path
$(0,0),(1,z_{1}'),\ldots, (kn, z_{kn}'), (kn+1,0)$ of order
$n$. Equivalently it is the relative order of the lattice points
$(1,z_{1}),\ldots, (kn, z_{kn})$ in the corresponding augmented
$k$-Catalan path of order $n$, where we order the lattice points first
by the second coordinate in increasing order and then by the first
coordinate in decreasing order. In the case when $k=2$ we will use the
term {\em Catalan--Spitzer permutation}.
\end{definition}

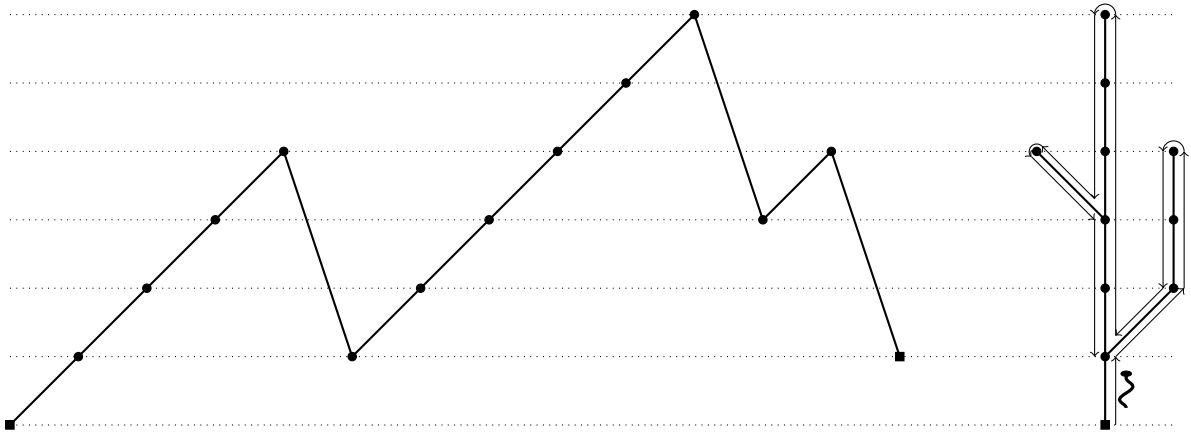
\begin{figure}
\begin{center}
\newcommand{\circlesize}{0.08}
\newcommand{\xx}{1.3}
\newcommand{\yy}{1.3}
\newcommand{\sss}{16}
\begin{tikzpicture}[scale = 0.7]
\foreach \j in {0,1, ..., 6} {\draw[dotted] ({0*\xx},{\j*\yy}) -- ({(\sss+1)*\xx},{\j*\yy});};

\draw[-,thick] ({0*\xx},{0*\yy})
-- ({1*\xx},{1*\yy})
-- ({2*\xx},{2*\yy})
-- ({3*\xx},{3*\yy})
-- ({4*\xx},{4*\yy})
-- ({5*\xx},{1*\yy})
-- ({6*\xx},{2*\yy})
-- ({7*\xx},{3*\yy})
-- ({8*\xx},{4*\yy})
-- ({9*\xx},{5*\yy})
-- ({10*\xx},{6*\yy})
-- ({11*\xx},{3*\yy})
-- ({12*\xx},{4*\yy})
-- ({13*\xx},{1*\yy})
;
\filldraw ({-\circlesize},{-\circlesize}) rectangle (\circlesize,\circlesize);
\filldraw ({1*\xx},{1*\yy}) circle (\circlesize);
\filldraw ({2*\xx},{2*\yy}) circle (\circlesize);
\filldraw ({3*\xx},{3*\yy}) circle (\circlesize);
\filldraw ({4*\xx},{4*\yy}) circle (\circlesize);
\filldraw ({5*\xx},{1*\yy}) circle (\circlesize);
\filldraw ({6*\xx},{2*\yy}) circle (\circlesize);
\filldraw ({7*\xx},{3*\yy}) circle (\circlesize);
\filldraw ({8*\xx},{4*\yy}) circle (\circlesize);
\filldraw ({9*\xx},{5*\yy}) circle (\circlesize);
\filldraw ({10*\xx},{6*\yy}) circle (\circlesize);
\filldraw ({11*\xx},{3*\yy}) circle (\circlesize);
\filldraw ({12*\xx},{4*\yy}) circle (\circlesize);
\filldraw ({13*\xx-\circlesize},{1*\yy-\circlesize}) rectangle ({13*\xx+\circlesize},{1*\yy+\circlesize});

\filldraw ({\sss*\xx-\circlesize},{0*\yy-\circlesize}) rectangle ({\sss*\xx+\circlesize},{0*\yy+\circlesize});
\filldraw ({\sss*\xx},{1*\yy}) circle (\circlesize);
\filldraw ({\sss*\xx},{2*\yy}) circle (\circlesize);
\filldraw ({\sss*\xx},{3*\yy}) circle (\circlesize);
\filldraw ({\sss*\xx},{4*\yy}) circle (\circlesize);
\filldraw ({\sss*\xx},{5*\yy}) circle (\circlesize);
\filldraw ({\sss*\xx},{6*\yy}) circle (\circlesize);
\filldraw ({(\sss+1)*\xx},{2*\yy}) circle (\circlesize);
\filldraw ({(\sss+1)*\xx},{3*\yy}) circle (\circlesize);
\filldraw ({(\sss+1)*\xx},{4*\yy}) circle (\circlesize);
\filldraw ({(\sss-1)*\xx},{4*\yy}) circle (\circlesize);
\draw[-,thick] ({\sss*\xx},{0*\yy}) -- ({\sss*\xx},{6*\yy});
\draw[-,thick] ({\sss*\xx},{1*\yy}) -- ({(\sss+1)*\xx},{2*\yy})  -- ({(\sss+1)*\xx},{4*\yy});
\draw[-,thick] ({\sss*\xx},{3*\yy}) -- ({(\sss-1)*\xx},{4*\yy});

\draw[->] ({\sss*\xx+0.2},{0*\yy}) -- ({\sss*\xx+0.2},{1*\yy});
\draw[->] ({\sss*\xx+0.2},{1*\yy}) -- ({(\sss+1)*\xx+0.2},{2*\yy});
\draw[->] ({(\sss+1)*\xx+0.2},{2*\yy}) -- ({(\sss+1)*\xx+0.2},{4*\yy});
\draw[->] ({(\sss+1)*\xx+0.2},{4*\yy}) arc (0:180:0.2);
\draw[->] ({(\sss+1)*\xx-0.2},{4*\yy}) -- ({(\sss+1)*\xx-0.2},{2*\yy});
\draw[->] ({(\sss+1)*\xx-0.2},{2*\yy}) -- ({(\sss)*\xx+0.2},{1*\yy+0.4});
\draw[->] ({(\sss)*\xx+0.2},{1*\yy+0.4}) -- ({(\sss)*\xx+0.2},{6*\yy});
\draw[->] ({(\sss)*\xx+0.2},{6*\yy}) arc (0:180:0.2);
\draw[->] ({(\sss)*\xx-0.2},{6*\yy}) -- ({(\sss)*\xx-0.2},{3*\yy+0.4});
\draw[->] ({(\sss)*\xx-0.2},{3*\yy+0.4}) -- ({(\sss-1)*\xx+0.1},{4*\yy+0.1});
\draw[->] ({(\sss-1)*\xx+0.1},{4*\yy+0.1}) arc (45:225:{sqrt(2)/10});
\draw[->] ({(\sss-1)*\xx-0.1},{4*\yy-0.1}) -- ({\sss*\xx-0.2},{3*\yy});
\draw[->] ({\sss*\xx-0.2},{3*\yy}) -- ({\sss*\xx-0.2},{1*\yy});

\draw [-,snake=snake, very thick] ({\sss*\xx+0.4},{0.25*\yy}) -- ({\sss*\xx+0.4},{0.75*\yy});
\filldraw ({\sss*\xx+0.4},{0.75*\yy}) ellipse (0.1 and 0.05);
\end{tikzpicture}
\end{center}
\caption{The augmented $4$-Catalan path whose associated
  $4$-Catalan--Spitzer permutation is $3,5,8,11,2,4,7,10,12,13,6,9,1$.}
\label{figure_4_Catalan_without_labelings_and_tree}
\end{figure}

\begin{example}
{\em An example of an augmented $4$-Catalan path and its labeling giving
  rise to the 
associated $4$-Catalan--Spitzer permutation is shown in
Figure~\ref{figure_4_Catalan_without_labelings_and_tree}. There are $3$
lattice points at level one, numbered right to left, followed by $2$ lattice
points at the next level, and so on. Modifying an idea presented
in~\cite{Stanley_EC2}, we may visualize an augmented $k$-Catalan path as a
description of the movement of a worm crawling around a rooted plane tree in
counterclockwise order, shown on the right of
Figure~\ref{figure_4_Catalan_without_labelings_and_tree}. The plane tree
is rooted with a root edge at level~$0$. Each up step in the lattice
path corresponds to the worm moving up one level and each down step
corresponds to the worm moving down $(k-1)$ levels. We can think of the
worm moving down $(k-1)$ times faster than up. The set of all rooted plane
trees with $n+1$ vertices is in bijection with the set of all augmented Catalan 
paths with $(n+1)$ up steps and $n$ down steps.  Only a subset of the set
of rooted plane trees with $(k-1)\cdot n+2$ vertices corresponds bijectively
to the set of $(k-1)$-Catalan paths with $(k-1)\cdot n+1$ up steps and
$n$ down steps. The numbering of the lattice points corresponds to the
labeling of the points where the worm begins or ends a move.   
}
\label{example_k_Catalan_Spitzer}
\end{example} 

In order to describe the finer structure of $k$-Catalan--Spitzer
permutations, we make the following definition.

\begin{definition}
Let $(i_1,\ldots,i_r)$ be a vector with nonnegative integer coordinates. 
We say that a $k$-Catalan--Spitzer permutation and the corresponding
augmented $k$-Catalan path has type
$(i_1,i_2,\ldots,i_r)$ if the augmented $k$-Catalan path has
$i_j$ lattice points at level $j$. We denote the number of
$k$-Catalan--Spitzer permutations having type $\ii = (i_1,i_2,\ldots,i_r)$ by
$t_{k}(\ii)$. 
\end{definition}
Note that this definition of type is not unique in the sense that an
augmented $k$-Catalan path has type $(i_1,i_2,\ldots,i_r)$ if and only if it
has type $(i_1,i_2,\ldots,i_r,0)$. In other words, we may add as many
zero coordinates to the type of an augmented $k$-Catalan path as we wish. 
We exclude the empty lattice path from consideration as we consider it
non-augmented. Hence $i_1$ must be positive.

Let $e_{i}$ denote the $i$th unit vector. For $S$ a finite subset of
positive integers, 
let $e_{S}$ denote the sum $e_{S}=\sum_{i \in S} e_{i}$
and $x_{S}$ denote the product $x_{S}=\prod_{i \in S} x_{i}$.
Furthermore, we also use the notation
$x^{\ii} = x_1^{i_1}x_2^{i_2}\cdots x_r^{i_r}$.
We write $[n] = \{1,2, \ldots, n\}$
and $[i,j] = \{i, i+1, \ldots, j\}$.

\begin{lemma}
\label{lemma_counting_types}  
The numbers $t_{k}(\ii)$,
where $\ii = (i_{1}, \ldots, i_{r-1},i_{r})$, are determined by 
the initial condition
$t_{k}(i_1) = \delta_{i_1,1}$ where $\delta_{k_1,1}$ is the Kronecker delta
and
if there is an index $j \in [r-k+1,r-1]$ such that $i_{j} < i_{r}$
then $t_{k}(\ii) = 0$.
When $r \geq k$ the following recurrence holds:
$$
t_{k}(\ii)
=
\binom{i_{r-k+1}-1}{i_r} \cdot t_{k}\left(\ii - i_{r} \cdot e_{[r-k+1,r]}\right) .
$$
\end{lemma}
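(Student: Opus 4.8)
The plan is to analyze an augmented $k$-Catalan path of type $\ii=(i_1,\ldots,i_r)$ by looking at what happens at the top level~$r$, and to set up a bijection that strips off the lattice points at level~$r$ together with the corresponding up and down steps. First I would recall that an augmented $k$-Catalan path is a sequence of $(k-1)n+1$ up steps $(1,1)$ and $n$ down steps $(1,-k+1)$ starting with an up step and never going below $y=1$ afterwards; since each down step drops exactly $k-1$ levels, every lattice point at level $r$ is the endpoint of an up step coming from level $r-1$, and the next step after it is either another up step (impossible here, since there are no lattice points at level $r+1$) or a down step to level $r-k+1$. So every visit to level~$r$ consists of exactly one lattice point, reached by an up step from level $r-1$ and left by a down step to level $r-k+1$; there are exactly $i_r$ such visits, consuming $i_r$ up steps and $i_r$ down steps.

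The key combinatorial step is to count how many ways these $i_r$ ``peaks'' can be inserted. Contracting each peak (delete the up step into level~$r$ and the down step out of it, and identify the two endpoints) yields a path that never reaches level~$r$, i.e.\ an augmented $k$-Catalan path of type $\ii - i_r\cdot e_{[r-k+1,r]}$: we lose $i_r$ points at each of levels $r-k+1,\ldots,r-1$ (the down steps used to land there) and all $i_r$ points at level~$r$. Wait---more carefully, a down step out of a level-$r$ peak lands at level $r-k+1$, so contracting it removes one lattice point from level $r-k+1$ only; hence after contracting all $i_r$ peaks we remove $i_r$ points from level $r-k+1$ and $i_r$ points from level~$r$, but nothing from the intermediate levels $r-k+2,\ldots,r-1$. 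This suggests the subtracted vector should be $i_r(e_{r-k+1}+e_r)$, not $i_r\cdot e_{[r-k+1,r]}$. The resolution, which I would check carefully, is that the lattice points the paper counts at a level include \emph{both} endpoints of steps at that level, so a peak at level~$r$ also contributes to the count at levels $r-1, r-2, \ldots$ via the portion of the path near it; re-examining the definition of type against Figure~\ref{figure_4_Catalan_without_labelings_and_tree} to pin down exactly which points at which levels are destroyed by a contraction is the crux, and the recurrence's index set $[r-k+1,r]$ tells us the intended accounting: each contracted peak removes one point from each of the $k$ consecutive levels $r-k+1,\ldots,r$.

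Granting the correct bookkeeping, the count factors as follows. Given the contracted path of type $\ii - i_r e_{[r-k+1,r]}$, we must choose where to reinsert the $i_r$ peaks. A peak can only be inserted at a lattice point of the contracted path that sits at level~$r-1$; but every such point must be ``used'' (each of the $i_{r-1} - i_r$ remaining level-$(r-1)$ points of the contracted path, plus we are reconstructing the $i_{r-1}$ level-$(r-1)$ points of the original). The number of ways to place $i_r$ indistinguishable peaks among the available slots at level $r-1$ of the contracted path is exactly $\binom{i_{r-k+1}-1}{i_r}$, because the slots are governed by the level-$(r-k+1)$ structure: the $i_{r-k+1}-1$ comes from the down-step landing positions available below a peak. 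I would verify this binomial coefficient by a direct slot-counting argument (a peak at level $r$ must be followed by a down step to level $r-k+1$, and the admissible insertion positions into the contracted path at level $r-1$ that keep the path from going below $y=1$ correspond to $i_r$-subsets of a set of size $i_{r-k+1}-1$). Finally, the base case $r=1$ forces a single lattice point at level~$1$ (the path is just up-down repeatedly at the bottom, but must start with an up step and never drop below $y=1$), giving $t_k(i_1)=\delta_{i_1,1}$, and the vanishing condition $t_k(\ii)=0$ when some $i_j<i_r$ for $j\in[r-k+1,r-1]$ holds because each peak at level~$r$ forces the existence of distinct supporting points at every intermediate level. The main obstacle is getting the slot-counting for the binomial coefficient exactly right, in particular identifying why the relevant count is $i_{r-k+1}-1$ rather than something depending on $i_{r-1}$; this requires a clean description of the admissible insertions of a level-$r$ peak in terms of the down steps terminating at level $r-k+1$.
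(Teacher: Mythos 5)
Your proposal stalls at exactly the two points that carry the proof, and in both places the difficulty you flag is resolved by an observation you did not make: a lattice point at the \emph{top} level $r$ is immediately preceded by a run of $k-1$ consecutive up steps, not by a single up step. Indeed, each of the $k-2$ points below the peak on its ascent (at levels $r-1, r-2,\ldots$) can only have been reached by an up step, since reaching it by a down step would require a point at level $\geq r+1$. Consequently the correct contraction removes, for each peak, the entire run of $k-1$ up steps together with the following down step $(1,1-k)$; this identifies the start of the run (at level $r-k+1$) with the landing point of the down step (also at level $r-k+1$) and deletes one lattice point at each of the $k$ levels $r-k+1,\ldots,r$, which is precisely why the type drops by $e_{[r-k+1,r]}$. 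Your version of the contraction (delete one up step and one down step and identify endpoints at levels $r-1$ and $r-k+1$) does not even produce a lattice path for $k>2$, and your proposed ``resolution'' via which endpoints are counted at each level is not the right explanation. You correctly sensed that this bookkeeping is the crux, but you left it unresolved.

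The second gap is the binomial coefficient. The reinsertion does not happen at level $r-1$ as you assert: one reinserts each block (a run of $k-1$ up steps followed by a down step) immediately after one of the $i_{r-k+1}-i_r$ lattice points at level $r-k+1$ of the \emph{reduced} path, and since several blocks may be attached after the same point, the count is the number of $i_r$-element multisets of an $(i_{r-k+1}-i_r)$-set, namely
$\binom{(i_{r-k+1}-i_r)+i_r-1}{i_r}=\binom{i_{r-k+1}-1}{i_r}$.
You explicitly defer this verification (``I would verify this binomial coefficient by a direct slot-counting argument''), and the slot description you sketch is tied to the wrong level, so the central enumeration is missing. The base case and the vanishing condition you handle adequately, but as it stands the proposal does not constitute a proof of the recurrence.
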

\begin{proof}
In an augmented $k$-Catalan path exactly one lattice point must be at level $1$
if the lattice path never hits level $2$. This yields the initial condition.  
Notice that any lattice point at level $r$ in an augmented 
$k$-Catalan path of type $\ii$ is a peak immediately
preceded by a run of $k-1$ up steps $(1,1)$ and immediately followed by a
down step $(1,1-k)$. By removing these steps, each $k$-Catalan path of
type $\ii$ may be uniquely reduced
to an augmented $k$-Catalan path path of type
$$
(i_1,\ldots,i_{r-k}, i_{r-k+1}-i_{r},\ldots, i_{r-1}-i_{r}, i_{r}-i_r)
=  
\ii - i_{r} \cdot e_{[r-k+1,r]} .
$$  
Conversely, given an augmented $k$-Catalan path of type
$\ii - i_{r} \cdot e_{[r-k+1,r]}$,
there are
$$
\multichoose{i_{r-k+1}-i_r}{i_r}
=
\binom{i_{r-k+1}-1}{i_r}
$$
ways to select the place to reinsert $i_r$ runs of $k-1$ up steps
$(1,1)$ immediately followed by a down step $(1-k,1)$, after one of the
$i_{r-k+1}-i_r$ lattice points at level $r-k+1$ of the reduced lattice path,
where
$\multichoose{n}{j} = \binom{n+j-1}{j}$
denotes the number of $j$ element multisubsets of an $n$-set.
 \end{proof}  
Using Lemma~\ref{lemma_counting_types} we obtain the following
recurrence for the associated generating functions.

\begin{lemma}
\label{lemma_types_genf}  
The generating functions for the $k$-Catalan--Spitzer permutations of
type $\ii$, that is, 
$$
T_k(x_1,\ldots,x_r)
=
\sum_{\ii \in \Ppp \times \Nnn^{r-1}}
t_{k}(\ii) \cdot x^{\ii}
$$
are given by the initial conditions
$T_k(x_1)=T_k(x_1,x_2)=\cdots=T_k(x_1,\ldots,x_{k-1})=x_1$ and
for $r\geq k$ by the recurrence
\begin{equation}
\label{equation_trecurrence}
T_k(x_1,x_2,\ldots,x_r)
=T_k\left(x_1,x_2,\ldots,x_{r-k},\frac{x_{r-k+1}}{1-x_{[r-k+1,r]}},x_{r-k+2},\ldots,x_{r-1}
\right).
\end{equation}
\end{lemma}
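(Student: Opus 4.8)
The plan is to convert the numerical recursion of Lemma~\ref{lemma_counting_types} directly into the stated functional equation, by organizing the defining sum $T_k(x_1,\dots,x_r)=\sum_{\ii}t_k(\ii)\,x^{\ii}$ according to the last coordinate $i_r$. Throughout we read $t_k$ as $0$ on any integer vector not lying in some $\Ppp\times\Nnn^{s-1}$, and identify a type with its zero-padded extensions, so that a nonzero term $t_k(\ii)\,x^{\ii}$ occurs in $T_k(x_1,\dots,x_r)$ exactly when $\ii$ is the length-$r$ type of an augmented $k$-Catalan path of highest level at most $r$.

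\emph{Initial conditions.} An augmented $k$-Catalan path all of whose lattice points other than $(0,0)$ lie at level $\le k-1$ has no down step, since a step $(1,1-k)$ from a point of level $\le k-1$ lands below level $1$; hence the unique such path is the single up step $(0,0),(1,1)$, of type $(1,0,\dots,0)$, and $T_k(x_1,\dots,x_s)=x_1$ for $1\le s\le k-1$.

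\emph{The recursion.} Fix $r\ge k$. By Lemma~\ref{lemma_counting_types} (in the form appearing in its proof), every $\ii\in\Ppp\times\Nnn^{r-1}$ satisfies $t_k(\ii)=\multichoose{i_{r-k+1}-i_r}{i_r}\,t_k\bigl(\ii-i_r\,e_{[r-k+1,r]}\bigr)$. For $(\mathbf{j},m)\in(\Ppp\times\Nnn^{r-2})\times\Nnn$ set $\ii(\mathbf{j},m)=(j_1,\dots,j_{r-k},j_{r-k+1}+m,\dots,j_{r-1}+m,m)$; this is injective, its image contains every $\ii$ with $t_k(\ii)\ne0$, and $t_k(\ii(\mathbf{j},m))=\multichoose{j_{r-k+1}}{m}\,t_k(\mathbf{j})$ while $x^{\ii(\mathbf{j},m)}=\bigl(\prod_{\ell=1}^{r-1}x_\ell^{j_\ell}\bigr)\bigl(x_{[r-k+1,r]}\bigr)^{m}$. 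Since $t_k$ vanishes off this image, $T_k(x_1,\dots,x_r)$ may be rewritten as
$$
\sum_{\mathbf{j}\in\Ppp\times\Nnn^{r-2}}t_k(\mathbf{j})\Bigl(\prod_{\ell=1}^{r-1}x_\ell^{j_\ell}\Bigr)\sum_{m\ge0}\multichoose{j_{r-k+1}}{m}\bigl(x_{[r-k+1,r]}\bigr)^{m}.
$$
Now $\sum_{m\ge0}\multichoose{a}{m}y^{m}=(1-y)^{-a}$ for every integer $a\ge0$, so the inner sum equals $\bigl(1-x_{[r-k+1,r]}\bigr)^{-j_{r-k+1}}$; absorbing it into the factor $x_{r-k+1}^{j_{r-k+1}}$ and leaving all other variables untouched turns the expression into
$$
\sum_{\mathbf{j}}t_k(\mathbf{j})\Bigl(\prod_{\ell\ne r-k+1}x_\ell^{j_\ell}\Bigr)\left(\frac{x_{r-k+1}}{1-x_{[r-k+1,r]}}\right)^{\!j_{r-k+1}},
$$
which is $T_k$ evaluated with its $(r-k+1)$st argument replaced by $x_{r-k+1}/(1-x_{[r-k+1,r]})$, i.e.\ the right-hand side of~\eqref{equation_trecurrence}.

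The only genuine subtlety is that this chain of equalities be valid as an identity of formal power series: the series $x_{r-k+1}/(1-x_{[r-k+1,r]})$ has vanishing constant term, so it may be substituted into $T_k$, and the re-indexing by $(\mathbf{j},m)$, the interchange of the two summations, and the geometric expansion all take place in the ring of formal power series in $x_1,\dots,x_r$. The main thing to verify carefully is that $\ii(\mathbf{j},m)$ has the three properties claimed above---injectivity, covering the support of $t_k$, and the product formula for $t_k$---which is precisely where Lemma~\ref{lemma_counting_types} and its reduction/reinsertion bijection are used; the degenerate case $r=k$, in which the first coordinate is itself shifted by $m$ (so one must check $j_1\ge1$), and the double occurrence of $x_{r-k+1}$ (once as the separated factor $x_{r-k+1}^{j_{r-k+1}}$, once inside $x_{[r-k+1,r]}$, so that only the $(r-k+1)$st slot of $T_k$ is altered) each deserve a moment's attention.
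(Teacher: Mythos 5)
Your proof is correct and follows essentially the same route as the paper's: expand $T_k$ using the recurrence of Lemma~\ref{lemma_counting_types}, re-index by the reduced type together with the value of $i_r$, sum the resulting negative-binomial series to $(1-x_{[r-k+1,r]})^{-j_{r-k+1}}$, and recognize the result as the substitution $x_{r-k+1}\mapsto x_{r-k+1}/(1-x_{[r-k+1,r]})$ in $T_k(x_1,\ldots,x_{r-1})$. The edge cases you flag (the support of $t_k$, the case $r=k$) are handled implicitly in the paper in exactly the same way, via the range of summation $\lv\in\Ppp\times\Nnn^{r-2}$.
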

Observe that the function on the left-hand side
of~(\ref{equation_trecurrence}) is $r$-ary, 
whereas the function on the right-hand side is $(r-1)$-ary.

\begin{proof}[Proof of Lemma~\ref{lemma_types_genf} .]
The initial conditions are straightforward to verify. Using the
recurrence stated in Lemma~\ref{lemma_counting_types},  we obtain
\begin{align}
T_k(x_1,\ldots,x_r)
& =
\nonumber
\sum_{\substack{\ii \in \Ppp \times \Nnn^{r-1} \\
i_{r}\leq i_{r-k+1},i_{r-k+2},\ldots,i_{r-1}}}
t_{k}\left(\ii - i_{r} \cdot e_{[r-k+1,r]}\right) \cdot  \binom{i_{r-k+1}-1}{i_r} \cdot x^{\ii} \\  
& =
\label{equation_t}
\sum_{\substack{\ii \in \Ppp \times \Nnn^{r-1} \\
i_{r}\leq i_{r-k+1},i_{r-k+2},\ldots,i_{r-1}}}
t_{k}\left(\ii - i_{r} \cdot e_{[r-k+1,r]}\right) \cdot x^{\ii - i_{r} \cdot e_{[r-k+1,r]}}
\cdot  \binom{i_{r-k+1}-1}{i_r} \cdot x_{[r-k+1,r]}^{i_r} .
\end{align}
Introduce $\lv$ to be the index vector $\ii - i_{r} \cdot e_{[r-k+1,r]}$
with the last zero removed,
that is, we set $\ell_{j} = i_{j}$ for $1 \leq j \leq r-k$
and
$\ell_{j} = i_{j}-i_r$ for $r-k+1 \leq j \leq r-1$.
The sum~(\ref{equation_t}) is now
\begin{align*}
T_k(x_1,\ldots,x_r)
& =
\sum_{\lv \in \Ppp \times \Nnn^{r-2}}
t_{k}(\lv) \cdot x^{\lv}
\cdot \sum_{0 \leq i_r} \binom{i_{r}+\ell_{r-k+1}-1}{i_r}
x_{[r-k+1,r]}^{i_{r}} \\
& =
\sum_{\lv \in \Ppp \times \Nnn^{r-2}}
t_{k}(\lv) \cdot x^{\lv
}
\cdot 
\frac{1}{(1- x_{[r-k+1,r]})^{\ell_{r-k+1}}} .
\qedhere
\end{align*}
\end{proof}

In order to give an explicit rational expression for these generating
functions, we define the denominator polynomial as follows. 
\begin{definition}
\label{definition_kcdenominator}
Given any positive integer $k\geq 2$ and any interval $[r,s]$ of
consecutive positive integers, we define the
{\em $k$-Catalan denominator polynomial $Q_k(x_r,x_{r+1},\ldots, x_s)$}
as the signed sum
$$
Q_k(x_r,x_{r+1},\ldots, x_s)
=
\sum_S (-1)^{{|S|}/{k}} \cdot x_{S},
$$
where $S$ ranges over all subsets of $[r,s]$ that arise as
a disjoint union of sets consisting of $k$ consecutive integers. The
empty set is included in the sum and contributes the term $1$. 
\end{definition}  

\begin{theorem}
\label{theorem_types_genf}  
  For $k\geq 2$ we have
  $$
T_k(x_1,x_2,\ldots,x_r)=\frac{x_1\cdot
  Q_k(x_2,\ldots,x_r)}{Q_k(x_1,\ldots,x_r)}. 
  $$
\end{theorem}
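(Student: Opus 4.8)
The plan is to prove the formula by induction on $r$, using the recurrence of Lemma~\ref{lemma_types_genf} and a matching recurrence for the $k$-Catalan denominator polynomials. First I would establish the base cases: for $1 \le r \le k-1$ the polynomial $Q_k(x_2,\ldots,x_r)$ contains no block of $k$ consecutive integers, hence equals $1$, and similarly $Q_k(x_1,\ldots,x_r)=1$, so the claimed right-hand side is $x_1$, matching the initial condition $T_k(x_1,\ldots,x_r)=x_1$. The substance of the argument is the inductive step for $r \ge k$.

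The key technical ingredient I would isolate first is a \emph{splitting identity} for $Q_k$ relative to its last $k$ arguments. Writing out Definition~\ref{definition_kcdenominator}, every admissible subset $S \subseteq [r-k+1,r]$ -- a disjoint union of $k$-blocks inside an interval of length exactly $k$ -- is either empty or all of $[r-k+1,r]$. More generally, for $Q_k(x_{a},\ldots,x_r)$ one sorts admissible $S$ according to whether $S$ contains the block $[r-k+1,r]$ or not: if not, then $S \subseteq [a,r-k]$ is admissible for the shorter interval; if yes, then $S \setminus [r-k+1,r] \subseteq [a, r-2k]$ is admissible. This yields
\begin{equation}
\label{equation_Q_split}
Q_k(x_a,\ldots,x_r)
= Q_k(x_a,\ldots,x_{r-k}) - x_{[r-k+1,r]}\cdot Q_k(x_a,\ldots,x_{r-2k}),
\end{equation}
with the convention that $Q_k$ of an empty list is $1$ and $Q_k$ of a list shorter than $k$ is also $1$. (One checks the edge cases, e.g.\ when $r-2k < a$, separately; the term $x_{[r-k+1,r]}$ still only multiplies a block genuinely disjoint from the rest.) I would apply \eqref{equation_Q_split} to both $Q_k(x_1,\ldots,x_r)$ and $Q_k(x_2,\ldots,x_r)$.

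Now I carry out the inductive step. By Lemma~\ref{lemma_types_genf},
$$
T_k(x_1,\ldots,x_r)
= T_k\!\left(x_1,\ldots,x_{r-k},\tfrac{x_{r-k+1}}{1-x_{[r-k+1,r]}},x_{r-k+2},\ldots,x_{r-1}\right),
$$
an $(r-1)$-ary expression to which the induction hypothesis applies. So the right-hand side equals $\dfrac{x_1\cdot \widetilde Q_k(x_2,\ldots,x_{r-1})}{\widetilde Q_k(x_1,\ldots,x_{r-1})}$, where $\widetilde Q_k$ denotes $Q_k$ evaluated with $x_{r-k+1}$ replaced by $x_{r-k+1}/(1-x_{[r-k+1,r]})$. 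It remains to show
$$
\widetilde Q_k(x_\bullet,\ldots,x_{r-1})
= \frac{Q_k(x_\bullet,\ldots,x_r)}{1-x_{[r-k+1,r]}}
$$
for the two relevant starting indices, since the denominators $1-x_{[r-k+1,r]}$ then cancel in the quotient. To see this, note that in $Q_k(x_\bullet,\ldots,x_{r-1})$ the variable $x_{r-k+1}$ appears linearly and only inside admissible sets $S$ that contain the block $[r-2k+2,r-k+1]$ (if $x_{r-k+1}$ lies in a $k$-block of $[r-k+1,r-1]$ at all, it must be the \emph{rightmost} such block, namely that one -- since a block ending past $r-k+1$ would overshoot). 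Hence $Q_k(x_\bullet,\ldots,x_{r-1})$ splits as $A + x_{r-k+1}\cdot B$ where $A = Q_k(x_\bullet,\ldots,x_{r-k})$ collects the $S$ avoiding $x_{r-k+1}$ and $x_{r-k+1} B$ collects those containing it, so $B = -Q_k(x_\bullet,\ldots,x_{r-2k+1})$... here I must be careful about exactly which index the leftover interval ends at. The clean way is: the substitution sends $Q_k(x_\bullet,\ldots,x_{r-1}) = A + x_{r-k+1}B$ to $A + \frac{x_{r-k+1}}{1-x_{[r-k+1,r]}}B = \frac{A(1-x_{[r-k+1,r]}) + x_{r-k+1}B}{1-x_{[r-k+1,r]}} = \frac{(A + x_{r-k+1}B) - x_{[r-k+1,r]}A}{1-x_{[r-k+1,r]}} = \frac{Q_k(x_\bullet,\ldots,x_{r-1}) - x_{[r-k+1,r]}\cdot Q_k(x_\bullet,\ldots,x_{r-k})}{1-x_{[r-k+1,r]}}$, using $A = Q_k(x_\bullet,\ldots,x_{r-k})$. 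Finally, comparing with \eqref{equation_Q_split} applied to $Q_k(x_\bullet,\ldots,x_r)$: that identity reads $Q_k(x_\bullet,\ldots,x_r) = Q_k(x_\bullet,\ldots,x_{r-k}) - x_{[r-k+1,r]} Q_k(x_\bullet,\ldots,x_{r-2k})$, which is \emph{not} literally the numerator above -- so the genuine content is the identity $Q_k(x_\bullet,\ldots,x_{r-1}) - x_{[r-k+1,r]}Q_k(x_\bullet,\ldots,x_{r-k}) = Q_k(x_\bullet,\ldots,x_r)$, i.e.\ another splitting of $Q_k(x_\bullet,\ldots,x_r)$, this time according to whether the admissible set meets $\{x_r\}$ (forcing the block $[r-k+1,r]$, contributing $-x_{[r-k+1,r]}Q_k(x_\bullet,\ldots,x_{r-k})$) or not (contributing $Q_k(x_\bullet,\ldots,x_{r-1})$). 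This is immediate from the definition.

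\textbf{Expected main obstacle.} The bookkeeping of edge cases in the denominator identities -- particularly when $r-k$ or $r-2k$ drops below the starting index $1$ or $2$, and pinning down exactly which $k$-block a given variable can belong to -- is where errors creep in; everything else is formal. I would handle this by fixing once and for all the convention that $Q_k$ of any list of fewer than $k$ variables equals $1$, verifying the two splitting identities (meets $\{x_r\}$ or not; and the substitution identity) purely combinatorially from Definition~\ref{definition_kcdenominator} without case analysis, and only then running the induction. The $k=2$ specialization, where $Q_2$ becomes an ordinary continuant, serves as a useful sanity check throughout.
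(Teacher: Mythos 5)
Your overall strategy is the same as the paper's: induct on $r$ via the substitution recurrence of Lemma~\ref{lemma_types_genf} and track how the substitution transforms numerator and denominator. The paper does this bookkeeping monomial by monomial; you package it into recurrences for $Q_k$, which is a legitimate and arguably cleaner organization. Your core computation for $r\geq k+1$ --- writing $Q_k(x_\bullet,\ldots,x_{r-1})=A+x_{r-k+1}B$ with $A=Q_k(x_\bullet,\ldots,x_{r-k})$ (correct, since any $k$-block inside $[\bullet,r-1]$ meeting $[r-k+2,r-1]$ must contain $r-k+1$), clearing the denominator, and invoking the splitting $Q_k(x_\bullet,\ldots,x_r)=Q_k(x_\bullet,\ldots,x_{r-1})-x_{[r-k+1,r]}\,Q_k(x_\bullet,\ldots,x_{r-k})$ --- is sound.

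Two concrete defects remain. First, your displayed splitting identity $Q_k(x_a,\ldots,x_r)=Q_k(x_a,\ldots,x_{r-k})-x_{[r-k+1,r]}\,Q_k(x_a,\ldots,x_{r-2k})$ is false: an admissible $S$ not containing the block $[r-k+1,r]$ need not lie in $[a,r-k]$ (for $k=2$ the polynomial $Q_2(x_1,\ldots,x_4)$ contains $-x_2x_3$, which neither contains $\{3,4\}$ nor sits inside $\{1,2\}$), and when $S$ does contain that block the remainder lives in $[a,r-k]$, not $[a,r-2k]$. You eventually discard this in favor of the correct splitting (condition on whether $r\in S$), so your final chain of reasoning does not depend on it, but the displayed equation must be corrected or removed. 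Second, the first inductive step $r=k$ genuinely breaks the ``denominators cancel'' argument: there the substituted variable is $x_1$ itself, so the prefactor $x_1$ in the numerator becomes $x_1/(1-x_{[1,k]})$, while your claimed identity $\widetilde Q_k(x_2,\ldots,x_{k-1})=Q_k(x_2,\ldots,x_k)/(1-x_{[1,k]})$ fails (the left side is $1$, the right side is $1/(1-x_{[1,k]})$). With the prefactor left untouched, as in your displayed quotient, you would obtain $T_k(x_1,\ldots,x_k)=x_1$ rather than the correct $x_1/(1-x_{[1,k]})$. This is exactly the kind of edge case you flagged as the expected obstacle, and it is repaired by verifying $r=k$ directly, but as written that step does not go through.
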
  
\begin{proof}
For $r<k$ the sets $[r]$ and $[2,r]$ do not
contain any subset of $k$ consecutive integers, hence   
we have $Q_k(x_1,\ldots,x_r)=Q_k(x_2,\ldots,x_r)=1$ and
the identity holds. We proceed by induction for
$r\geq k$. By Lemma~\ref{lemma_types_genf} the generating function
$T_k(x_1,x_2,\ldots,x_r)$ is obtained from
$T_k(x_1,x_2,\ldots,x_{r-1})$ by substituting
$x_{r-k+1}/(1-x_{[r-k+1,r]})$ into $x_{r-k+1}$. This
substitution turns the stated formula for
$T_k(x_1,x_2,\ldots,x_{r-1})$ into a four-level fraction which may be
transformed into a quotient of two polynomials by multiplying the
numerator and the denominator by $1-x_{[r-k+1,r]}$.
This operation leaves all monomials
$x_{S}$ where $S\subseteq [r-1]$
containing a factor of $x_{r-k+1}$ unchanged, as replacing $x_{r-k+1}$ with
$x_{r-k+1}/(1-x_{[r-k+1,r]})$ and then multiplying by
$1-x_{[r-k+1,r]}$ amounts to no change at all.   
On the other hand, each monomial
$x_S$ satisfying $r-k+1\not\in S$ is replaced with
$x_S-x_S\cdot x_{[r-k+1,r]}$.

Assuming the induction hypothesis, the terms $x_S$ appearing
in the denominator of $T_k(x_1,\ldots,x_{r-1})$ are indexed exactly
by those subsets $S\subseteq [r-1]$ which arise as a disjoint
union of sets consisting of $k$ consecutive integers. Each such set 
is also a subset of $[r]$, and by our recurrence
the corresponding term $x_S$ remains in the denominator with the same
coefficient. The additional new terms in the denominator of
$T_k(x_1,x_2,\ldots,x_{r-1})$ are exactly the terms $x_S \cdot
x_{[r-k+1,r]}$ where $S$ ranges through all terms of
$Q_k(x_1,\ldots,x_{r-1})$ that do not contain $x_{r-k+1}$ as a factor. Note
that $r-k+1\not \in S\subseteq [r-1]$ implies $S\subseteq
[1,r-k]$ as the interval $[r-k+2,r-1]$ contains less
than $k$ consecutive integers. The converse is also true.
Hence the monomial $x_S \cdot x_{[r-k+1,r]}$ is
square-free, the underlying set is obtained by adding the disjoint set
$[r-k+1,r]$ consisting of $k$ consecutive integers to~$S$.
Hence we are adding exactly those terms of $Q_k(x_1,\ldots,x_{r})$
to the denominator which do not appear in $Q_k(x_1,\ldots,x_{r-1})$, and
the sign of $x_S \cdot x_{[r-k+1,r]}$ is the opposite of
$x_S$, consistent with the definition of $Q_k(x_1,\ldots,x_{r})$. 
Similar reasoning may be used to show that the numerator of
$T_k(x_1,x_2,\ldots,x_{r})$ is $x_1 \cdot Q_k(x_2,\ldots,x_{r})$.
\end{proof}  

\begin{example}
{\em 
As an example of Theorem~\ref{theorem_types_genf},  
we obtain for $k=3$ and $r=6$
\begin{align*}
T_3(x_1,x_2, \ldots, x_6)
& =
\frac{x_1\cdot (1 - x_{[2,4]} - x_{[3,5]} - x_{[4,6]})}
{1 - x_{[1,3]} - x_{[2,4]} - x_{[3,5]} - x_{[4,6]} + x_{[1,6]}} .
\end{align*}}
\end{example}

Theorem~\ref{theorem_types_genf} may be restated in a more compact form
in terms of the following generalization of continuants.

\begin{definition}
\label{definition_kcontinuant}  
The {\em $n$th $k$-continuant $K_{k,n}(x_1,x_2,\ldots, x_n)$} is defined
recursively by the initial condition
$K_{k,n} = x_{1} x_{2} \cdots x_{n}$ for $n < k$
and for $n \geq k$ by the recurrence
$$
K_{k,n}(x_1,x_2,\ldots,x_n)
=
K_{k,n-k}(x_1,x_2,\ldots,x_{n-k})+K_{k,n-1}(x_1,x_2,\ldots,x_{n-1})\cdot x_n .
$$
\end{definition}  
Note that for $k=2$ Definition~\ref{definition_kcontinuant} is the classical
definition of the continuants. The verification of the following facts
for $k$-continuants are completely analogous to the proof in the $k=2$
case, and are left to the reader. 
\begin{proposition}
\label{proposition_Kconsecutive}
The $k$-continuant $K_{k,n}(x_1,x_2,\ldots,x_n)$ can be computed by
taking the sum of all possible products of $x_1,x_2,\ldots,x_n$ in which
any number of disjoint sets of $k$ consecutive variables are deleted from
the product $x_1x_2\cdots x_n$.
\end{proposition}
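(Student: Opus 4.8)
The plan is to prove the identity by induction on $n$, showing that the combinatorial sum described in the statement obeys the same initial conditions and recurrence as $K_{k,n}$ in Definition~\ref{definition_kcontinuant}. To this end, write $P_n(x_1,\ldots,x_n)$ for the sum, over all families $\mathcal{B}$ of pairwise disjoint subsets of $[n]$ each of the form $[j,j+k-1]$, of the monomial $x_{[n]\setminus\bigcup\mathcal{B}}$, where the empty family is allowed and contributes $x_1x_2\cdots x_n$. The goal is to show $P_n=K_{k,n}$ for all $n$.

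For $n<k$ the interval $[n]$ contains no block of $k$ consecutive integers, so $\mathcal{B}$ must be empty and $P_n=x_1\cdots x_n=K_{k,n}$, matching the initial condition. For $n\geq k$, assuming the identity for all smaller indices, I would split the sum defining $P_n$ according to whether $n$ is deleted. If $n\notin\bigcup\mathcal{B}$, then, since every block is a subset of $[n]$ avoiding $n$, every block in fact lies in $[n-1]$, and conversely any family of disjoint $k$-blocks of $[n-1]$ arises this way; factoring out $x_n$ gives the contribution $x_n\cdot P_{n-1}(x_1,\ldots,x_{n-1})$. If instead $n\in\bigcup\mathcal{B}$, the unique block of $\mathcal{B}$ containing $n$ must be $[n-k+1,n]$, because $n$ is the maximum of $[n]$; by disjointness the remaining blocks all lie in $[n-k]$, and conversely any family of disjoint $k$-blocks of $[n-k]$ together with $[n-k+1,n]$ is admissible. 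Since the variables $x_{n-k+1},\ldots,x_n$ are all deleted in this case, this part of the sum equals $P_{n-k}(x_1,\ldots,x_{n-k})$. Adding the two contributions yields $P_n=P_{n-k}+P_{n-1}\cdot x_n$, which is exactly the recurrence defining $K_{k,n}$, and the induction closes.

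I do not anticipate a genuine obstacle here: the argument is routine bijective bookkeeping, entirely parallel to the classical $k=2$ continuant computation alluded to in the remark preceding the statement. The one point that deserves a moment's care is the case $n\in\bigcup\mathcal{B}$ — namely that a block of $k$ consecutive integers contained in $[n]$ and containing $n$ is forced to equal $[n-k+1,n]$, so that disjointness confines the remaining blocks to $[1,n-k]$ with nothing landing in $[n-k+1,n-1]$. Everything else is symmetric splitting of a sum.
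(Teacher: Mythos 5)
Your proof is correct, and it is exactly the standard argument the paper has in mind: the paper leaves this verification to the reader as being ``completely analogous to the proof in the $k=2$ case,'' and your induction — splitting the families of disjoint $k$-blocks according to whether the block $[n-k+1,n]$ is present, thereby recovering the recurrence $P_n=P_{n-k}+P_{n-1}\cdot x_n$ — is precisely that classical continuant argument.
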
  
\begin{proposition}
  Introducing the $k\times k$ matrix
  $$
  M_k(x)=\begin{bmatrix}
  x & 0 & \ldots & 0 & 1\\
  1 & 0 & \ldots & 0 & 0\\
  0 & 1 & \ldots & 0 & 0\\
  \vdots & & \ddots & & \\
  0 & 0 & \ldots & 1 & 0\\
  \end{bmatrix},
  $$
  we may write
  $$
  \begin{bmatrix}
    K_{k,n}(x_1,x_2,\ldots,x_n)\\
    K_{k,n-1}(x_1,x_2,\ldots,x_{n-1})\\
    \vdots\\
    K_{k,n-k+1}(x_1,x_2,\ldots,x_{n-k+1})\\
  \end{bmatrix}
  =
  M_k(x_n)M_k(x_{n-1})\cdots M_k(x_1)
  \begin{bmatrix}
    1\\
    0\\
    \vdots\\
    0\\
  \end{bmatrix}.  
  $$
\end{proposition}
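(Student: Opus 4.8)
The plan is to prove the matrix identity by induction on $n$, with the base case covering $n = 0, 1, \ldots, k-1$ and the inductive step using the defining recurrence of the $k$-continuants from Definition~\ref{definition_kcontinuant}. First I would set up notation: write $v_n = (K_{k,n}, K_{k,n-1}, \ldots, K_{k,n-k+1})^{T}$ for the column vector on the left-hand side, with the convention that $K_{k,m} = x_1 \cdots x_m$ for $0 \le m < k$ (so in particular $K_{k,0} = 1$, and negative-index continuants, if they appeared, would be handled by extending the convention or by noting they never arise). The claim is then $v_n = M_k(x_n) M_k(x_{n-1}) \cdots M_k(x_1) \cdot e_1$, where $e_1 = (1, 0, \ldots, 0)^{T}$.

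For the base case $n = k-1$: the product $M_k(x_{k-1}) \cdots M_k(x_1) e_1$ should be computed directly, or alternatively one verifies the slightly cleaner base case $n = 0$, where the empty product is the identity matrix and $v_0 = (K_{k,0}, K_{k,-1}, \ldots)^{T}$; here it is cleanest to check $n = k-1$ by hand, observing that each application of $M_k(x_j)$ to the current vector shifts entries down by one and inserts $x_j \cdot (\text{top entry}) + (\text{bottom entry})$ at the top, which for small indices (where no recurrence kicks in) reproduces exactly $x_1 x_2 \cdots x_j$ in the top slot. For the inductive step, assume $v_{n-1} = M_k(x_{n-1}) \cdots M_k(x_1) e_1$; it suffices to show $v_n = M_k(x_n) v_{n-1}$. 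Reading off the matrix–vector product $M_k(x_n) v_{n-1}$ entry by entry: the first entry is $x_n \cdot (v_{n-1})_1 + (v_{n-1})_k = x_n \cdot K_{k,n-1} + K_{k,n-k}$, which equals $K_{k,n}$ precisely by the recurrence in Definition~\ref{definition_kcontinuant}; the remaining entries of $M_k(x_n) v_{n-1}$ are the first $k-1$ entries of $v_{n-1}$, namely $K_{k,n-1}, K_{k,n-2}, \ldots, K_{k,n-k+1}$, which are exactly the last $k-1$ entries of $v_n$. Hence $M_k(x_n) v_{n-1} = v_n$, completing the induction.

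The only real subtlety — and the step I expect to require the most care — is the interface between the initial conditions and the recurrence, i.e.\ making sure the convention for $K_{k,m}$ with $m$ in the range $0 \le m < k$ is consistent with how the matrix shifts entries, so that the inductive step's use of the recurrence is legitimate starting exactly at $n = k$ and the base-case vector has the right entries. Everything else is a routine unwinding of the bidiagonal-plus-corner structure of $M_k(x)$: multiplying by $M_k(x)$ is a "shift down and recombine at the top" operation, which is precisely the shape of the $k$-continuant recurrence. Since the paper explicitly says this verification is "analogous to the $k=2$ case" and is "left to the reader," I would keep the write-up to a few lines: state the induction, display the one-line computation of the top entry via the recurrence, and note the trivial match of the lower entries.
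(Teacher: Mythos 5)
Your proof is correct and is exactly the routine induction the paper has in mind when it leaves the verification "to the reader" as analogous to the classical $k=2$ continuant case: the base case $n=k-1$ builds the vector $(x_1\cdots x_{k-1},\ldots,x_1,1)^{T}$ by successive shifts, and the inductive step matches the top entry $x_nK_{k,n-1}+K_{k,n-k}$ with the defining recurrence. No gaps.
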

\begin{proposition}
The number of terms in $K_{k,n}(x_1,\ldots,x_n)$, that is, $K_{k,n}(1,\ldots,1)$,
is recursively obtained by
\begin{equation}
\label{equation_kcrecursion}
K_{k,n}(1,\ldots,1)
=
\begin{cases}
  1 & \text{ for $0\leq n\leq k-1$,} \\
  K_{k,n-k}(1,\ldots,1)+K_{k,n-k}(1,\ldots,1) & \text{ for } n\geq k.
\end{cases}
\end{equation}
\end{proposition}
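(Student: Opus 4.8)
The plan is to establish the recursion \eqref{equation_kcrecursion} directly from the defining recurrence in Definition~\ref{definition_kcontinuant} by specializing every variable to $1$. First I would handle the base case: for $0\leq n\leq k-1$ the definition gives $K_{k,n}(x_1,\ldots,x_n)=x_1x_2\cdots x_n$, which is the empty product (equal to $1$) when $n=0$ and a single monomial for $1\leq n\leq k-1$; in either case $K_{k,n}(1,\ldots,1)=1$, as claimed. (One should read $K_{k,0}$ as the empty product so that the recurrence at $n=k$ makes sense.)

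For the inductive step, suppose $n\geq k$. Substituting $x_1=\cdots=x_n=1$ into the recurrence
$$
K_{k,n}(x_1,\ldots,x_n)=K_{k,n-k}(x_1,\ldots,x_{n-k})+K_{k,n-1}(x_1,\ldots,x_{n-1})\cdot x_n
$$
and using that the substitution is a ring homomorphism (so it respects sums and products, and sends $x_n$ to $1$) yields immediately
$$
K_{k,n}(1,\ldots,1)=K_{k,n-k}(1,\ldots,1)+K_{k,n-1}(1,\ldots,1).
$$
This is exactly the asserted recurrence, once one notes that the second summand on the right-hand side of~\eqref{equation_kcrecursion} as printed, namely $K_{k,n-k}(1,\ldots,1)+K_{k,n-k}(1,\ldots,1)$, should read $K_{k,n-k}(1,\ldots,1)+K_{k,n-1}(1,\ldots,1)$; I would flag this apparent typo rather than reproduce it.

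Alternatively, and perhaps more in the spirit of the surrounding text, one can deduce the count from Proposition~\ref{proposition_Kconsecutive}: $K_{k,n}(1,\ldots,1)$ equals the number of ways to delete any collection of pairwise disjoint blocks of $k$ consecutive variables from $x_1x_2\cdots x_n$, equivalently the number of subsets of $[n]$ that are disjoint unions of intervals of length $k$. Conditioning on whether the last variable $x_n$ survives (giving $K_{k,n-1}(1,\ldots,1)$ configurations on $x_1\cdots x_{n-1}$) or is deleted, in which case it must lie in a block $[n-k+1,n]$ that is entirely deleted (leaving $K_{k,n-k}(1,\ldots,1)$ configurations on $x_1\cdots x_{n-k}$), produces the same recurrence. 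There is no real obstacle here; the only subtlety is bookkeeping at the boundary — making sure the empty product convention and the index range in the base case are consistent so that the recurrence first applies at $n=k$ — and correcting the evident misprint in the stated recursion.
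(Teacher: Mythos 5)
Your proof is correct and is essentially the argument the paper intends (the paper leaves this verification to the reader as being analogous to the classical $k=2$ case): one simply specializes the defining recurrence of Definition~\ref{definition_kcontinuant} at $x_1=\cdots=x_n=1$. You are also right that the second summand in the displayed recursion is a misprint and should read $K_{k,n-1}(1,\ldots,1)$ — as printed it would give $2\,K_{k,n-k}(1,\ldots,1)$, which contradicts both the definition and the stated connection to the Fibonacci numbers when $k=2$.
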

For $k=2$ the sequence $K_{2,n}(1,\ldots,1)$ is the Fibonacci number $F_{n}$.
For $k=3$ the sequence $\{K_{3,n}(1,\ldots,1)\}_{n\geq 0}$ is sequence A000930
in~\cite{OEIS}, also known as Narayana's cow sequence. The same page
also contains information regarding the general sequence
$\{K_{k,n}(1,\ldots,1)\}_{n\geq 0}$.

As a direct consequence of Definition~\ref{definition_kcdenominator} and
Proposition~\ref{proposition_Kconsecutive}, we have the following corollary.
\begin{corollary}
\label{corollary_q_kc}
Given any positive integer $k\geq 2$, let $\zeta$ denote a
primitive $(2k)$th root of unity. Then for any interval $[r,s]$ of
consecutive positive integers, the $k$-Catalan
  denominator polynomial $Q_k(x_r,x_{r+1},\ldots, x_s)$ is given by
$$
Q_k(x_r,x_{r+1},\ldots, x_s)
=
\zeta^{s-r+1} \cdot x_r x _{r+1} \cdots x_s
\cdot K_{k,s-r+1}\left(\frac{1}{\zeta x_r},\frac{1}{\zeta x_{r+1}},\ldots,\frac{1}{\zeta x_s}\right). 
$$
\end{corollary}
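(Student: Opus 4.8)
The plan is to verify the claimed identity by comparing the two sides term by term, using the combinatorial description of each side as a signed sum over subsets arising as disjoint unions of blocks of $k$ consecutive integers. By Proposition~\ref{proposition_Kconsecutive}, the $k$-continuant $K_{k,s-r+1}(y_1,\ldots,y_{s-r+1})$ expands as the sum, over all ways of deleting any number of pairwise disjoint blocks of $k$ consecutive variables from the product $y_1y_2\cdots y_{s-r+1}$, of the surviving product. Reindexing so that the variables are $1/(\zeta x_r),1/(\zeta x_{r+1}),\ldots,1/(\zeta x_s)$, a choice of deleted blocks corresponds precisely to a subset $S\subseteq[r,s]$ that is a disjoint union of sets of $k$ consecutive integers, and the surviving product is $\prod_{i\in[r,s]\setminus S} 1/(\zeta x_i)$.

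First I would multiply through by $\zeta^{s-r+1} x_r x_{r+1}\cdots x_s$ as on the right-hand side. For the term indexed by $S$, this converts the surviving product $\prod_{i\in[r,s]\setminus S}(\zeta x_i)^{-1}$ into $\zeta^{s-r+1}\cdot\zeta^{-(s-r+1-|S|)}\cdot x_S = \zeta^{|S|}\cdot x_S$, where I use the notation $x_S=\prod_{i\in S}x_i$ from the paper. Since $S$ is a disjoint union of $|S|/k$ blocks of size $k$, we have $|S|=k\cdot(|S|/k)$, so $\zeta^{|S|}=(\zeta^k)^{|S|/k}$. The key number-theoretic input is that $\zeta$ is a primitive $(2k)$th root of unity, hence $\zeta^k=-1$, giving $\zeta^{|S|}=(-1)^{|S|/k}$. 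Therefore the term indexed by $S$ on the right-hand side equals $(-1)^{|S|/k}\cdot x_S$, which is exactly the term indexed by $S$ in Definition~\ref{definition_kcdenominator} of $Q_k(x_r,\ldots,x_s)$. The empty set contributes $1$ on both sides, matching the stated convention. Summing over all admissible $S$ establishes the identity.

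The argument is essentially bookkeeping, so there is no deep obstacle; the one point that needs care is making sure the index sets on the two sides are in genuine bijection. Concretely, I would note that the ``deleted'' blocks in the $k$-continuant expansion must be pairwise disjoint and each consist of $k$ consecutive positions among $1,\ldots,s-r+1$; under the shift $j\mapsto r-1+j$ these become pairwise disjoint blocks of $k$ consecutive integers in $[r,s]$, and their union is a typical subset $S$ in Definition~\ref{definition_kcdenominator}. Conversely every such $S$ arises this way, and its decomposition into consecutive blocks of length $k$ is unique, so no term is counted with the wrong multiplicity. With that bijection in hand, the exponent computation $\zeta^{s-r+1}\cdot\zeta^{-(s-r+1-|S|)}=\zeta^{|S|}=(-1)^{|S|/k}$ finishes the proof.
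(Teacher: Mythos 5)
Your proof is correct and is essentially the argument the paper intends: the paper states this corollary as a ``direct consequence'' of Definition~\ref{definition_kcdenominator} and Proposition~\ref{proposition_Kconsecutive} without writing out details, and your term-by-term matching (including the observation that $\zeta^{|S|}=(\zeta^k)^{|S|/k}=(-1)^{|S|/k}$ and that the block decomposition of each admissible $S$ is unique) is exactly the bookkeeping being elided.
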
  

Substituting Corollary~\ref{corollary_q_kc} into
Theorem~\ref{theorem_types_genf}, after simplifying by $x_1x_2\cdots
x_r$, we obtain the formula
\begin{align}
\label{equation_types_continuants}  
T_k(x_1,x_2,\ldots,x_r)
& =
\frac{1}{\zeta}
\cdot
\frac{K_{k,r-1}\left(\frac{1}{\zeta x_2},\ldots,\frac{1}{\zeta x_r}\right)}
{K_{k,r}\left(\frac{1}{\zeta x_1},\ldots,\frac{1}{\zeta x_r}\right)}. 
\end{align}  

We conclude this section by having a closer look at the Catalan case.
Using the well-known relation between continuants and continued
fractions, equation~\eqref{equation_types_continuants} may be rewritten
as
\begin{align}
\label{equation_types}  
T_2(x_1,x_2,\ldots,x_r)
& =
\frac{1}{\mathbf{i}}
\cdot
\cfrac{1}{\frac{1}{\mathbf{i}\cdot
    x_1}+\cfrac{1}{\frac{1}{\mathbf{i}\cdot x_2}+\ddots
    \cfrac{1}{\frac{1}{\mathbf{i}\cdot x_r}}}}
=
\cfrac{1}{\frac{1}{x_1}-\cfrac{1}{\frac{1}{x_2}-\ddots \cfrac{1}{\frac{1}{x_r}}}} ,
\end{align}
where $\mathbf{i}$ is the square root of $-1$.

\begin{remark}
{\em Equation~\eqref{equation_types} is also a consequence of
  Flajolet's result~\cite[Theorem 1]{Flajolet} which provides a
  generating function formula for {\em Motzkin paths}, starting at
$(0,0)$ and ending on the $x$ axis consisting of up steps $(1,1)$, down
steps $(1,-1)$ and horizontal steps $(1,0)$ that never go below the $x$-axis.
To obtain Equation~\eqref{equation_types}, we must set $c_i=0$
and $a_{i} = b_{i} = x_{i+1}$ for all $i\geq 0$ in Flajolet's formula
and multiply the resulting generating function by $x_1$. The additional
factor of $x_1$ is induced by the fact that we consider augmented
Catalan paths.  Thus we obtain the generating function
$$
\cfrac{x_1}{1-\cfrac{x_1x_2}{1-\cfrac{x_2x_3}{1-\cfrac{x_3x_4}{\ddots}}}}.
$$
It is straightforward to see that we obtain the generating function that is the
limit, as $r$ goes to infinity, of the function given in~\eqref{equation_types}.}
\end{remark}

\section{Short $k$-Catalan--Spitzer permutations}
\label{section_short_catalan_spitzer}

The $k$-Catalan--Spitzer permutations defined in the previous section
contain redundant information. In this section we show that we may
restrict our attention to the lattice points which are the lower ends of
the up steps. The resulting permutations have a particularly
nice representation when we consider the Foata--Strehl group
action~\cite{Foata-Strehl1,Foata-Strehl2}.  

\begin{definition}  
A {\em short $k$-Catalan--Spitzer permutation of order $n$} is the relative
order of the numbers $z_{i_1}',\ldots,z_{i_{(k-1)n}}'$ in a
$k$-Catalan--Spitzer path 
$(0,0),(1,z_{1}'),\ldots, (kn, z_{kn}'), (kn+1,0)$ of order
$n$, where $\{i_{1},i_{2},\ldots,i_{(k-1)n}\}$ is the set of indices $i_j\geq
1$ satisfying $z_{i_j}'<z_{i_j+1}'$.  
In the case when $k=2$ we use the
term {\em short Catalan--Spitzer permutation}. 
\end{definition}

In analogy to Definition~\ref{definition_Catalan_Spitzer_permutation},
a short Catalan--Spitzer permutation may be also defined in terms of
$k$-Catalan paths as follows.

\begin{proposition}
\label{proposition_k_Catalan-short}  
The set of short $k$-Catalan--Spitzer permutations of order $n$ is the
set of all permutations arising by the following procedure. Take a
$k$-Catalan path of order $n$ and number its up steps so that
the values increase right to left at the same level and upward
between different levels. Record the numbers along the lattice path. 
\end{proposition}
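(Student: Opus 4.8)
The plan is to show that the recipe in Proposition~\ref{proposition_k_Catalan-short} is, after unwinding the definitions, literally the same readout that defines short $k$-Catalan--Spitzer permutations, transported along the bijections $P\leftrightarrow\widehat P\leftrightarrow P'$ between $k$-Catalan paths $P$ of order $n$, augmented $k$-Catalan paths $\widehat P$, and $k$-Catalan--Spitzer paths $P'=(0,0),(1,z_1'),\dots,(kn,z_{kn}'),(kn+1,0)$. First I would fix such a $P$, form $\widehat P$ by prepending an up step and translating up by $1$, and form the associated $P'$. I record the dictionary: step $m$ of $P$ is step $m+1$ of $\widehat P$; the up/down pattern is the same all along $P$, $\widehat P$, and $P'$; and since an augmented path stays at height $\ge 1$ after its first step, the last step of $\widehat P$ is forced to be a down step, so the prepended up step is the only up step of $\widehat P$ not matching an up step of $P$. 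Call a lattice point $(i,z_i)$ of $\widehat P$ with $i\ge 1$ a \emph{kept point} if it is the lower end of an up step. Because $P\mapsto\widehat P\mapsto P'$ is a bijection, it suffices to prove that for this fixed $P$ the two readouts produce the same word.

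Next I would identify the index set $\{i_1<\dots<i_{(k-1)n}\}$. Since $z_i'<z_{i+1}'$ exactly when step $i+1$ of $P'$ is an up step, and the up/down pattern of $P'$ matches that of $\widehat P$ and hence (for $i\ge 1$) of $P$, the $i_j$ are precisely the positions of the up steps of $P$; equivalently $i_j$ is the position of the $j$th up step of $P$ read left to right, and the kept point $(i_j,z_{i_j})$ of $\widehat P$ lies at the lower end of that up step, exactly one unit higher and one position further right than the lower end of the corresponding up step of $P$.

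Then comes the rank computation, which is the crux. By Proposition~\ref{proposition_revlexorder}, $z_{i_l}'<z_{i_j}'$ iff either $z_{i_l}<z_{i_j}$, or $z_{i_l}=z_{i_j}$ and $i_l>i_j$. Hence the rank of $z_{i_j}'$ among $z_{i_1}',\dots,z_{i_{(k-1)n}}'$ equals the number of kept points of $\widehat P$ strictly below level $z_{i_j}$, plus the number of kept points of $\widehat P$ at level $z_{i_j}$ lying weakly to the right of position $i_j$. Translating through the dictionary (subtract $1$ from every level and every position), this count becomes the number of up steps of $P$ that start below level $z_{i_j}-1$, plus the number of up steps of $P$ starting at level $z_{i_j}-1$ weakly to the right of the $j$th up step. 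This is exactly the label assigned to the $j$th up step of $P$ by the rule that labels increase from right to left within a level and increase upward between levels. Thus the label of the $j$th up step of $P$ equals the rank of $z_{i_j}'$.

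Finally I would assemble the pieces: the short $k$-Catalan--Spitzer permutation is the relative order of $(z_{i_1}',\dots,z_{i_{(k-1)n}}')$, so its $j$th entry is the rank of $z_{i_j}'$, while recording the numbers along the path of $P$ puts in $j$th place the label of the $j$th up step encountered, i.e.\ of the up step in position $i_j$; by the previous step these agree, so the two words coincide, and running over all $P$ (using that $P\mapsto P'$ is a bijection) gives the claimed equality of sets. I expect the only real work to be the index/offset bookkeeping: keeping straight the two unit shifts introduced by the prepended up step, the fact that the final vertex of $\widehat P$ is a down-step endpoint and so never enters the short permutation, and the orientation conventions (``right to left'', ``upward'', and the direction in which the relative order is taken). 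None of this is deep, but all of it must be nailed down so that the two readouts become visibly identical.
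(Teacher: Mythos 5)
Your proof is correct, and it is precisely the argument the paper leaves implicit: the paper states this proposition without proof, presenting it as a direct reformulation of Definition~\ref{definition_Catalan_Spitzer_permutation} (which already orders lattice points by height and then right-to-left) via Proposition~\ref{proposition_revlexorder}. Your careful bookkeeping of the prepended up step and the rank computation fills in exactly the omitted details, with no gaps.
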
  
We will say that a short $k$-Catalan--Spitzer permutation $\sigma$ is {\em
  induced} by a $k$-Catalan path $P$ if the procedure described in
Proposition~\ref{proposition_k_Catalan-short} applied to $P$ yields the
permutation $\sigma$.

A short $k$-Catalan--Spitzer permutation associated to a
$k$-Catalan path may be computed directly from the
corresponding (full)  
$k$-Catalan--Spitzer permutation using the notions of an {\em ascents}
and {\em patterns}. Recall that the index $i\in \{1,\ldots, n-1\}$ is an
{\em ascent} of a permutation $\pi(1)\pi(2)\cdots\pi(n)$ if
$\pi(i)<\pi(i+1)$ holds. Furthermore, given an ordered alphabet $X$ with
$n$ letters, a {\em permutation} of $X$ is a word $w_1w_2\cdots w_n$
containing each letter of $X$ exactly once. The {\em pattern}
of the permutation $w$ is the permutation $\pi(1)\pi(2)\cdots \pi(n)$ of the set
$\{1,2,\ldots,n\}$ satisfying 
$\pi(i)<\pi(j)$ if and only if $w_i<w_j$ for each $1\leq i<j\leq n$.

The following lemma follows directly from the definitions. 

\begin{lemma}
Let $\pi(1)\pi(2)\cdots\pi(kn)$ and $\sigma(1)\sigma(2)\cdots
\sigma((k-1)n)$ be a $k$-Catalan--Spitzer, respectively a short
$k$-Catalan--Spitzer, permutation of order $n$ associated to the same
$k$-Catalan--Spitzer path. Then $\sigma(1)\sigma(2)\cdots
\sigma((k-1)n)$ is the pattern of
$\pi(i_{1})\pi(i_{2})\cdots\pi(i_{(k-1)n})$ where
$\{i_{1},i_{2},\ldots,i_{(k-1)n}\}$ is the set of ascents of
$\pi(1)\pi(2)\cdots\pi(kn)$.    
\end{lemma}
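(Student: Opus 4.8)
The plan is to unwind the two definitions and check that they produce the same word. Recall that the full $k$-Catalan--Spitzer permutation $\pi(1)\pi(2)\cdots\pi(kn)$ records, along the underlying $k$-Catalan--Spitzer path $(0,0),(1,z_1'),\ldots,(kn,z_{kn}'),(kn+1,0)$, the relative order of all the interior second coordinates $z_1',\ldots,z_{kn}'$; that is, $\pi(i)$ is the rank of $z_i'$ among $z_1',\ldots,z_{kn}'$. The short permutation $\sigma(1)\cdots\sigma((k-1)n)$, by definition, records the relative order of the sub-collection $z_{i_1}',\ldots,z_{i_{(k-1)n}}'$, where $\{i_1<\cdots<i_{(k-1)n}\}$ is the index set $\{i\geq 1 : z_i'<z_{i+1}'\}$; so $\sigma(j)$ is the rank of $z_{i_j}'$ among the $(k-1)n$ numbers $z_{i_1}',\ldots,z_{i_{(k-1)n}}'$, listed in the order $i_1<i_2<\cdots$.

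First I would observe that the index $i$ satisfies $z_i'<z_{i+1}'$ exactly when the $i$-th step of the path (from $(i-1,\ast)$ to $(i,z_i')$) is immediately followed by an up step from $(i,z_i')$ to $(i+1,z_{i+1}')$ — equivalently, $i$ is a position where the lattice point $(i,z_i')$ is the \emph{lower} end of an up step. Next I would show that this same index set $\{i_1,\ldots,i_{(k-1)n}\}$ is precisely the ascent set of the word $\pi(1)\pi(2)\cdots\pi(kn)$: indeed $i$ is an ascent of $\pi$ iff $\pi(i)<\pi(i+1)$ iff $z_i'$ has smaller rank than $z_{i+1}'$ iff $z_i'<z_{i+1}'$, using that all the $z_i'$ are distinct (guaranteed by Proposition~\ref{proposition_revlexorder}). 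So the positions selected in the definition of the short permutation coincide with the ascent positions of $\pi$.

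Then the argument is pure bookkeeping about patterns. The subword $\pi(i_1)\pi(i_2)\cdots\pi(i_{(k-1)n})$ is, entrywise, the list of ranks-within-the-full-set of the numbers $z_{i_1}',\ldots,z_{i_{(k-1)n}}'$, in the order $i_1<\cdots<i_{(k-1)n}$. Its pattern, by the definition of pattern recalled just before the lemma, is the permutation $\tau(1)\cdots\tau((k-1)n)$ of $\{1,\ldots,(k-1)n\}$ with $\tau(j)<\tau(j')$ iff $\pi(i_j)<\pi(i_{j'})$. Since rank is order-preserving, $\pi(i_j)<\pi(i_{j'})$ iff $z_{i_j}'<z_{i_{j'}}'$, and by definition of $\sigma$ we likewise have $\sigma(j)<\sigma(j')$ iff $z_{i_j}'<z_{i_{j'}}'$. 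Two permutations of the same set $\{1,\ldots,(k-1)n\}$ that induce the same linear preorder on their positions are equal, so $\tau=\sigma$, which is exactly the claim.

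There is essentially no obstacle here — the lemma is, as the authors say, immediate from the definitions — so the only thing to be careful about is making the identification ``ascent set of $\pi$ $=$ set of lower ends of up steps $=$ the index set $\{i : z_i'<z_{i+1}'\}$'' explicit, and invoking distinctness of the $z_i'$ (Proposition~\ref{proposition_revlexorder}) so that ``relative order'' and ``pattern'' genuinely agree with the rank description. If one wanted to keep it even shorter, one could simply note that taking the pattern of a subword commutes with the rank map: ranking a sub-list within itself gives the same result as ranking it within the ambient list and then re-ranking, and both the short permutation and the claimed pattern are instances of the former and latter operations respectively.
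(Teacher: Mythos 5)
Your proof is correct, and it is exactly the definitional unwinding the paper has in mind: the paper gives no argument at all (it states that the lemma "follows directly from the definitions"), and your write-up simply makes explicit the two key identifications — that the ascent set of $\pi$ coincides with the index set $\{i : z_i' < z_{i+1}'\}$ used to define the short permutation, and that taking the pattern of the corresponding subword reproduces the relative order of the selected $z_{i_j}'$. No further comment is needed.
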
  

\begin{example}
  {\em Consider the $4$-Catalan--Spitzer permutation
    $3,5,8,11,2,4,7,10,12,13,6,9,1$ of order $3$ discussed in
Example~\ref{example_k_Catalan_Spitzer}. Its ascent set is
$\{1,2,3,5,6,7,8,9,11\}$. The pattern of the word $3,5,8,2,4,7,10,12,6$
is $2,4,7,1,3,6,8,9,5$.}  
\end{example}  

As shown in Proposition~\ref{proposition_csp_injection} below, 
the operation assigning to each $k$-Catalan--Spitzer permutation
(equivalently, each $k$-Catalan path) the
corresponding short $k$-Catalan--Spitzer permutation is injective. We
will prove this by considering the {\em Foata--Strehl trees} of short
$k$-Catalan--Spitzer permutations. Recall that a {\em plane $0-1-2$
  tree} is a rooted plane tree, in which each vertex has degree at most
$2$. (It is not unusual to call plane $0-1-2$ trees plane binary trees.
However, a plane binary tree in the strict sense cannot contain a
vertex with a single child.) 

\begin{definition}
Let $w_{1}w_{2}\cdots w_{n}$ be a word with letters from an
ordered alphabet containing no repeated letters. The {\em Foata--Strehl
  tree $\FS(w_{1}w_{2}\cdots w_{n})$} of this word is the plane 
$0-1-2$ tree defined recursively as follows. The root of the tree is
$w_i=\min(w_{1},w_{2}, \ldots, w_{n})$ whose left child is
$\min(w_{1},w_{2}, \ldots, w_{i-1})$ and whose right child is
$\min(w_{i+1},w_{2}, \ldots, w_{n})$. There is no left child if $i=1$ and
no right child if $i=n$. The subtree of the left child is
$\FS(w_{1}w_{2}\cdots w_{i-1})$, and the subtree 
of the right child is $\FS(w_{i+1}w_{i+2}\cdots
w_{n})$.     
\end{definition}  

Clearly, the correspondence between permutations of $\{1,2,\ldots,n\}$
and Foata--Strehl trees with $n$ vertices is a bijection. 

\begin{lemma}
  \label{lemma_levels}
  Let $\sigma=\sigma(1)\sigma(2)\cdots 
\sigma((k-1)n)$ be a  short $k$-Catalan--Spitzer permutation induced by
a $k$-Catalan path $P$ of order $n$. 
  If $\sigma(i)$ is the label of an up step that is immediately followed
  by a down step in $P$ then $\sigma(i)$ has no right child.
  If $\sigma(i)$ is the label of an up step that is immediately followed
  by an up step in $P$ then $\sigma(i)$ has a right child $\sigma(j)$
  and the level of the up step labeled by $\sigma(j)$ is one more than
  the level of the up step labeled by $\sigma(i)$.
\end{lemma}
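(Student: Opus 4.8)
The plan is to reduce the statement to the standard description of the Foata--Strehl tree $\FS(w)$ of a word $w=w_1\cdots w_N$ with distinct letters as a \emph{min-Cartesian tree}, and then to translate the resulting condition back to the lattice path via the labeling rule of Proposition~\ref{proposition_k_Catalan-short}. The structural fact I would first record and prove is: in $\FS(w)$ the letter $w_i$ has a right child if and only if $i<N$ and $w_i<w_{i+1}$ (that is, $i$ is an ascent of $w$), and in that case the right child is the smallest letter among $w_{i+1},\ldots,w_d$, where $d$ is the largest index such that $w_{i'}>w_i$ for all $i'\in\{i+1,\ldots,d\}$. This follows by unwinding the recursive definition of $\FS$: following the chain of recursive calls whose underlying interval contains position $i$, one checks that $w_i$ is the root of exactly one sub-call $\FS(w_c\cdots w_d)$, where $[c,d]$ is the maximal interval containing $i$ on which $w_i$ attains the minimum (so $c=1$ or $w_{c-1}<w_i$, and $d=N$ or $w_{d+1}<w_i$); the children of $w_i$ are then the roots of $\FS(w_c\cdots w_{i-1})$ and $\FS(w_{i+1}\cdots w_d)$, from which both assertions are immediate.

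Next I would set $\sigma(i)=\lambda(u_i)$, where $u_1,\ldots,u_{(k-1)n}$ are the up steps of $P$ read from left to right, $\lambda$ is the labeling of Proposition~\ref{proposition_k_Catalan-short}, and $\ell(u)$ denotes the level of an up step $u$; thus $\lambda(u)<\lambda(u')$ precisely when $\ell(u)<\ell(u')$, or $\ell(u)=\ell(u')$ and $u$ lies to the right of $u'$. The one geometric observation needed is: $u_i$ is immediately followed by an up step if and only if $i<(k-1)n$ and $\ell(u_{i+1})=\ell(u_i)+1$ (the following step, being an up step, must be $u_{i+1}$, and it rises from the top of $u_i$); whereas if $u_i$ is immediately followed by a down step, then either $i=(k-1)n$ or, since the path must descend strictly below $\ell(u_i)$ before the next up step can occur, we have $\ell(u_{i+1})\le\ell(u_i)$.

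The two claims now follow. If $u_i$ is immediately followed by a down step and $i<(k-1)n$, then either $\ell(u_{i+1})<\ell(u_i)$, giving $\lambda(u_{i+1})<\lambda(u_i)$, or $\ell(u_{i+1})=\ell(u_i)$, in which case $\lambda(u_{i+1})<\lambda(u_i)$ again because $u_i$ precedes, hence lies to the left of, $u_{i+1}$; in both cases $\sigma(i)>\sigma(i+1)$, so $\sigma(i)$ has no right child, and the case $i=(k-1)n$ is trivial since then $\sigma(i)$ is the last letter of $\sigma$. If instead $u_i$ is immediately followed by an up step, then $\sigma(i)<\sigma(i+1)$, so $\sigma(i)$ has a right child $\sigma(j)=\min\{\sigma(i+1),\ldots,\sigma(d)\}$ with $d$ as in the structural fact; every $u_{i'}$ with $i<i'\le d$ lies to the right of $u_i$ yet has a strictly larger label, which forces $\ell(u_{i'})>\ell(u_i)$, i.e. $\ell(u_{i'})\ge \ell(u_i)+1$. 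Since $\ell(u_{i+1})=\ell(u_i)+1$, the minimum level occurring among $u_{i+1},\ldots,u_d$ is exactly $\ell(u_i)+1$, and the up step carrying the smallest label among them, namely $u_j$, must sit at that minimum level, which is the assertion.

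The only genuinely delicate point is the Cartesian-tree description of $\FS$ in the first step: pinning down the interval $[c,d]$ of the unique recursive call rooted at $w_i$ and verifying that its right endpoint $d$ is precisely the first position to the right of $i$ where the values drop below $w_i$. Once that is in place the remainder is a routine dictionary between the labeling rule and the shape of the path, the equal-level subtlety in the first claim arising only when $k=2$.
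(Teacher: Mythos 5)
Your proof is correct and follows essentially the same route as the paper: both arguments rest on the Cartesian-tree description of the right subtree of $\sigma(i)$ as the maximal consecutive block of larger letters following position $i$, combined with the observation that up steps occurring later in the path but carrying larger labels must lie at strictly higher levels. The paper identifies the right child concretely as the last up step of the excursion of $P$ returning to the level of the up step labeled $\sigma(i)$, whereas you identify only its level as the minimum of $\ell(u_{i+1}),\dots,\ell(u_d)$; this is a cosmetic difference (and your phrase ``descend strictly below $\ell(u_i)$'' should read ``to or below $\ell(u_i)$'' when $k=2$, as your own treatment of the equal-level case already acknowledges).
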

\begin{proof}
If the up step labeled $\sigma(i)$ is immediately followed by a down
step, then the level of the next up step is not greater than that of the
up step labeled $\sigma(i)$. In this case $\sigma(i+1)<\sigma(i)$ holds
and the right subtree of $\sigma(i)$ in $\FS(\sigma)$ is empty. Assume
from now on that 
the up step labeled $\sigma(i)$ is immediately followed by an up step.   
As we follow the up steps along $P$ after the up step labeled
$\sigma(i)$, all have a larger label than $\sigma(i)$ until $P$ returns
to the level of $\sigma(i)$. The label of the next up step is less than
the label of $\sigma(i)$. Hence the labels belonging to the right
subtree of $\sigma(i)$ in $\FS(\sigma)$ are exactly the up steps
belonging to the part $P'$ of $P$ that begins with the up step labeled
$\sigma(i)$ and ends with the first return of $P$ to the same level. The
labels in this subtree belong to up steps whose level is greater than
the level of the up step labeled $\sigma(i)$. The level of the
last up step in $P'$ is one more than that of the up step labeled
$\sigma(i)$ and it is the rightmost among all up steps of $P'$ at this
level. Hence its label is the least element of the right subtree of
$\sigma(i)$.   
\end{proof}
Inspired by Lemma~\ref{lemma_levels}, we define the {\em level} of each
letter in a permutation as follows.
\begin{definition}
  \label{definition_levels}
Let $T$ be a plane $0-1-2$ tree. We define the {\em level} of each
vertex of $T$ as follows.
\begin{enumerate}
\item The level of the root is zero.
\item For any other vertex $v$, the level of $v$ is the number
  of right steps in the unique path in $T$ from the root to $v$.
\end{enumerate}  
Given any permutation $\sigma=\sigma(1)\sigma(2)\cdots\sigma(n)$ we
define the level of $\sigma(i)$ as the level of the vertex labeled
$\sigma(i)$ in the the Foata--Strehl tree $\FS(\sigma)$.
\end{definition}  
Using Definition~\ref{definition_levels} we may rephrase
Lemma~\ref{lemma_levels} as follows.
\begin{corollary}
Let $\sigma=\sigma(1)\sigma(2)\cdots \sigma((k-1)n)$ be a short
$k$-Catalan--Spitzer permutation induced by a $k$-Catalan
path $P$ of order $n$. Then for each $i$ the level of the up step
labeled $\sigma(i)$ is the same as the level of $\sigma(i)$. 
\label{corollary_levels}
\end{corollary}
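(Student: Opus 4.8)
The plan is to prove the identity $\operatorname{level}(\sigma(i))=L(i)$ directly, by writing both sides as the same count, rather than unwinding Lemma~\ref{lemma_levels} edge by edge. Throughout, $\sigma=\sigma(1)\cdots\sigma(N)$ with $N=(k-1)n$, the value $\sigma(i)$ is the label of the $i$-th up step of $P$ read along the path, $L(i)$ is the level of that up step, and $\operatorname{level}(\sigma(i))$ is the level of the vertex labeled $\sigma(i)$ in $\FS(\sigma)$ in the sense of Definition~\ref{definition_levels}.

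The first step is a general fact about Foata--Strehl trees: for any word $w_1\cdots w_n$ without repeated letters and any index $c$, the number of right edges on the path from the root of $\FS(w_1\cdots w_n)$ to the vertex $w_c$ equals $\#\{\,j:1\le j<c,\ w_j<w_\ell\text{ for all }\ell\text{ with }j<\ell\le c\,\}$. This follows by induction on $n$ straight from the recursive definition of $\FS$: writing $w_m=\min(w_1,\ldots,w_n)$, if $c>m$ then we pass to $\FS(w_{m+1}\cdots w_n)$, gaining one right edge, which is matched by the new admissible index $j=m$ (no index $j<m$ works, since $w_m$ is minimal), while if $c\le m$ then we pass to $\FS(w_1\cdots w_{m-1})$, gaining no right edge, and the relevant set $\{\,j:j<c,\ \ldots\,\}$ is unchanged because it is contained in $[1,m-1]$. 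Applying this to $\sigma$, Definition~\ref{definition_levels} gives
\[
\operatorname{level}(\sigma(i))=\#\{\,j:1\le j<i,\ \sigma(j)<\sigma(\ell)\text{ for all }\ell\in(j,i]\,\}.
\]

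The second step translates the labeling rule of Proposition~\ref{proposition_k_Catalan-short} into the language of levels. For $j<\ell$ the $j$-th up step of $P$ lies to the left of the $\ell$-th, so the rule ``labels increase from right to left within a level and increase upward between different levels'' yields $\sigma(j)<\sigma(\ell)$ if and only if $L(j)<L(\ell)$. Consequently
\[
\operatorname{level}(\sigma(i))=\#\{\,j:1\le j<i,\ L(j)<\min(L(j+1),\ldots,L(i))\,\},
\]
and it remains to see that this count equals $L(i)$. Here $L(1)=0$ because $P$ begins with an up step, and $L(j+1)\le L(j)+1$ for every $j$ because between two consecutive up steps the height increases by $1$ at the first of them and then does not increase again. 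It follows that $j\mapsto L(j)$ is a bijection from the set just counted onto $\{0,1,\ldots,L(i)-1\}$: any counted index $j$ satisfies $0\le L(j)<L(i)$, and conversely, given $\lambda\in\{0,1,\ldots,L(i)-1\}$, the largest index $j\le i$ with $L(j)\le\lambda$ lies strictly before $i$ (since $L(i)>\lambda$), satisfies $L(\ell)>\lambda$ for all $\ell\in(j,i]$ by maximality, and satisfies $L(j)=\lambda$ because $\lambda<L(j+1)\le L(j)+1$; moreover this $j$ is the only counted index at level $\lambda$, since any other such index would lie inside $(j,i]$. Hence the count is $|\{0,1,\ldots,L(i)-1\}|=L(i)$, which is the claim.

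The one genuinely delicate point is this final counting step---in particular, recognizing that the last index at which the level is at most $\lambda$ before reaching $i$ sits exactly at level $\lambda$, which uses $L(j+1)\le L(j)+1$ in an essential way. One could instead induct down $\FS(\sigma)$, invoking Lemma~\ref{lemma_levels} at right edges, but then one must argue separately that passing to a left child leaves the level unchanged; by the description of right subtrees in the proof of Lemma~\ref{lemma_levels}, that reduces to the observation that every Foata--Strehl subtree of $\FS(\sigma)$ corresponds to a block of consecutive up steps of $P$ that never descends below, and that starts at, the level of that subtree's root. The direct count above avoids this bookkeeping and is the route I would take.
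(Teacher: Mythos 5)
Your argument is correct, and it takes a genuinely different route from the paper. The paper offers no separate proof of this corollary: it presents it as an immediate rephrasing of Lemma~\ref{lemma_levels} via Definition~\ref{definition_levels}, so the substantive work is in the lemma's analysis of right subtrees (each right subtree of $\FS(\sigma)$ consists of the up steps in the excursion of $P$ strictly above the level of $\sigma(i)$, with the right child one level higher). To turn that into the corollary one still needs that the root sits at level $0$ and that descending to a left child preserves the up-step level -- exactly the bookkeeping you identify at the end and that the paper leaves implicit. Your proof sidesteps the lemma entirely: you establish the closed formula that the number of right edges from the root to $w_c$ in $\FS(w_1\cdots w_n)$ equals $\#\{j<c: w_j<\min(w_{j+1},\ldots,w_c)\}$ (a clean induction on the recursive definition), translate the labeling rule of Proposition~\ref{proposition_k_Catalan-short} into the equivalence $\sigma(j)<\sigma(\ell)\iff L(j)<L(\ell)$ for $j<\ell$, and then biject the counted indices with $\{0,1,\ldots,L(i)-1\}$ using $L(1)=0$ and $L(j+1)\le L(j)+1$; the ``last index with $L(j)\le\lambda$'' argument for surjectivity is right and is indeed the one delicate point. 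What your approach buys is a self-contained and fully rigorous derivation that does not depend on Lemma~\ref{lemma_levels} and explicitly closes the left-child gap; what the paper's route buys is brevity, since the lemma has already identified the tree structure with the excursion structure of the path.
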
  
An important consequence of Corollary~\ref{corollary_levels} is the
following. 
\begin{proposition}
The operation associating to each $k$-Catalan path $P$ its induced
$k$-Catalan--Spitzer permutation is injective. 
\label{proposition_csp_injection}  
\end{proposition}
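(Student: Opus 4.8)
The plan is to show that the entire $k$-Catalan path $P$ can be reconstructed from its induced short $k$-Catalan--Spitzer permutation $\sigma = \sigma(1)\sigma(2)\cdots\sigma((k-1)n)$, which immediately gives injectivity. The key observation is Corollary~\ref{corollary_levels}: the level of the up step labeled $\sigma(i)$ in $P$ equals the level of $\sigma(i)$ in the Foata--Strehl tree $\FS(\sigma)$. Since $\FS(\sigma)$ is determined by $\sigma$, we can read off, for each $i$ in order, the level $\ell_i$ at which the $i$th up step of $P$ (in path order) begins.

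First I would record the up steps of $P$ in the order they occur along the path, with labels $\sigma(1),\sigma(2),\dots,\sigma((k-1)n)$, and attach to $\sigma(i)$ its level $\ell_i$ computed from $\FS(\sigma)$ via Definition~\ref{definition_levels}. By Corollary~\ref{corollary_levels}, the $i$th up step rises from height $\ell_i$ to height $\ell_i+1$. Next I would argue that knowing the sequence of starting levels $\ell_1,\ell_2,\dots,\ell_{(k-1)n}$ of the up steps, together with the fact that $P$ is an augmented $k$-Catalan path (every up step is $(1,1)$, every down step is $(1,1-k)$, the path starts with an up step, stays strictly above $y=1$ after the first step, and returns to level $1$ at the end), forces the placement of all the down steps and hence determines $P$ uniquely. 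Concretely: after the $i$th up step the path sits at height $\ell_i+1$; before the $(i+1)$st up step the path must descend to height $\ell_{i+1}$, and since each down step lowers the height by exactly $k-1$, the number of down steps inserted between consecutive up steps is forced to be $(\ell_i+1-\ell_{i+1})/(k-1)$ — a nonnegative integer by the structure of a valid Catalan path — and after the last up step the path descends from $\ell_{(k-1)n}+1$ back to level $1$, using $(\ell_{(k-1)n}+1-1)/(k-1)$ down steps. Thus $P$ is completely recovered from $\sigma$.

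I would then conclude: if two $k$-Catalan paths $P$ and $P'$ induce the same short $k$-Catalan--Spitzer permutation $\sigma$, then by Corollary~\ref{corollary_levels} the up steps of $P$ and of $P'$ occur at the same sequence of levels in path order, and by the reconstruction above $P = P'$. This proves injectivity. (By Proposition~\ref{proposition_k_Catalan-short} and the bijection between $k$-Catalan paths and $k$-Catalan--Spitzer permutations, this is the same as saying the map from $k$-Catalan--Spitzer permutations to short ones is injective.)

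The main obstacle is the verification that the level sequence $\ell_1,\dots,\ell_{(k-1)n}$ really does determine the down-step placement with no ambiguity — in particular, checking that the differences $\ell_i+1-\ell_{i+1}$ are always nonnegative multiples of $k-1$, which follows because in a $k$-Catalan path the height after any up step is $1+(\text{number of levels climbed})$ and down steps drop by $k-1$, so all heights visited at up-step boundaries are congruent mod $k-1$; and the only subtlety is that a down step could in principle be placed either immediately before an up step or "later," but since there are no steps other than up steps and down steps and the up steps are already pinned down in path order with their starting heights, the block of down steps between two consecutive up steps is uniquely the unique descending run connecting the prescribed heights. Everything else is bookkeeping that follows from Corollary~\ref{corollary_levels}.
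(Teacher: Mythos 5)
Your proposal is correct and follows essentially the same route as the paper: invoke Corollary~\ref{corollary_levels} to read the level of each up step off the Foata--Strehl tree $\FS(\sigma)$, and then observe that the down steps between consecutive up steps are forced by the level differences (you are in fact slightly more careful than the paper in dividing by $k-1$ and checking divisibility). No gaps.
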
  
\begin{proof}
Assume the $k$-Catalan--Spitzer permutation $\sigma$ is induced by the
$k$-Catalan path $P$. It suffices to show that $P$ may be uniquely
reconstructed from the Foata--Strehl tree $\FS(\sigma)$ of $\sigma$. By
Corollary~\ref{corollary_levels} the level of each up step may be read
from $\FS(\sigma)$. Note finally that the number of down steps between
the up step labeled $\sigma(i)$ and the next up step labeled
$\sigma(i+1)$ is the difference between the level of $\sigma(i)$ and the
level of $\sigma(i+1)$.
\end{proof}  
Definition~\ref{definition_levels} allows us to define the level of each
letter in any permutation. The next definition allows us to identify
the short $k$-Catalan--Spitzer permutations by looking at their
Foata--Strehl trees.  
\begin{definition}
\label{definition_levelwise}  
  Let $T$ be a plane $0-1-2$ tree with $n$ vertices numbered from  $1$
  to $n$. We say the that $T$ is {\em levelwise numbered}  if the
  labeling of its vertices satisfies the following criteria:
  \begin{enumerate}
\item If the level of the vertex labeled $i$ is less than the level of
  the vertex labeled $j$ then $i<j$.
\item If the vertex labeled $j$ is in the left subtree of the vertex
  labeled $i$ then $i<j$.
\item If the vertex labeled $i$ and the vertex labeled $j$ have the same
  level, but there is a $k<i,j$ such that vertex labeled
  $i$ (respectively $j$) is in the right (respectively left) subtree of
  the vertex labeled~$k$ then $i<j$. 
\end{enumerate}  
\end{definition}  
\begin{proposition}
Each plane $0-1-2$ tree has a unique levelwise numbering. 
\label{proposition_unique}
\end{proposition}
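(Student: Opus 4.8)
The plan is to exhibit one explicit levelwise numbering and then show that every levelwise numbering coincides with it. For a vertex $v$ of $T$ let its \emph{address} be the word $\alpha(v)$ over the two-letter alphabet $\{L,R\}$ recording the sequence of left and right steps along the unique path from the root to $v$; thus the root has the empty address, and the left and right children of $v$ have addresses $\alpha(v)L$ and $\alpha(v)R$. With this convention the level of $v$ in the sense of Definition~\ref{definition_levels} is exactly the number of occurrences of $R$ in $\alpha(v)$. Order the alphabet by $R<L$ and define a total order $\prec$ on the vertices of $T$ by declaring $u\prec v$ when either the level of $u$ is smaller than the level of $v$, or $u$ and $v$ have the same level and $\alpha(u)$ precedes $\alpha(v)$ in the induced lexicographic order on words (with the usual convention that a word precedes every word properly extending it). Since distinct vertices have distinct addresses, $\prec$ is a strict total order; let $\phi$ be the bijection from the vertices of $T$ onto $[n]$ that ranks them according to $\prec$. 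The claim is that $\phi$ is the unique levelwise numbering.

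The key preliminary observation, valid for \emph{any} levelwise numbering $\psi$, is that a proper ancestor always receives a smaller label than its descendant. Indeed, if $u$ is a proper ancestor of $v$ then $\alpha(u)$ is a proper prefix of $\alpha(v)$, so the level of $u$ does not exceed that of $v$; if it is strictly smaller, condition~(1) of Definition~\ref{definition_levelwise} gives $\psi(u)<\psi(v)$, and if the two levels are equal then the nonempty portion of $\alpha(v)$ beyond $\alpha(u)$ contains no $R$, hence begins with $L$, so $v$ lies in the left subtree of $u$ and condition~(2) gives $\psi(u)<\psi(v)$. Granting this, verifying that $\phi$ is levelwise is routine: condition~(1) holds because the level is the primary key in the definition of $\prec$; condition~(2) holds because $v$ lying in the left subtree of $u$ makes $u$ a proper ancestor of $v$, to which the observation applies; and condition~(3) holds because, if $u$ and $v$ have the same level and some vertex $K$ is an ancestor of both with $u$ in the right subtree and $v$ in the left subtree of $K$, then $\alpha(u)=\alpha(K)R\cdots$ and $\alpha(v)=\alpha(K)L\cdots$, so $\alpha(u)$ lexicographically precedes $\alpha(v)$ and $u\prec v$, while $K$ being a proper ancestor of both supplies a label $k=\phi(K)$ lying below $\phi(u)$ and $\phi(v)$.

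For uniqueness, let $\psi$ be any levelwise numbering; I will show that $u\prec v$ implies $\psi(u)<\psi(v)$ for all distinct vertices $u,v$, which forces $\psi=\phi$ because $\prec$ is total. Suppose $u\prec v$. If the level of $u$ is strictly smaller than that of $v$, condition~(1) finishes it. Otherwise $u$ and $v$ have the same level and $\alpha(u)$ lexicographically precedes $\alpha(v)$. If $\alpha(u)$ is a proper prefix of $\alpha(v)$, then, the two vertices having the same level, the suffix of $\alpha(v)$ past $\alpha(u)$ is a nonempty word of $L$'s, so $v$ lies in the left subtree of $u$ and condition~(2) applies. If instead the two addresses first disagree at some position, then, since $R<L$, the letter of $\alpha(u)$ there is $R$ and that of $\alpha(v)$ is $L$; letting $K$ be the vertex whose address is the common prefix preceding this position, $K$ is a proper ancestor of both $u$ and $v$, with $u$ in its right subtree and $v$ in its left subtree, so by the ancestor observation $\psi(K)<\psi(u)$ and $\psi(K)<\psi(v)$, whence condition~(3) yields $\psi(u)<\psi(v)$.

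The only genuinely nonroutine step is pinning down the order $\prec$: a naive guess, such as comparing addresses purely lexicographically, violates condition~(1) (a left child of the root, at level $0$, would be forced to come after the right child of the root, at level $1$), so one must sort by level first and break ties lexicographically, and one must check that the tie-breaking orientation compatible with condition~(3) is $R<L$ rather than $L<R$. Once the order is chosen correctly, everything reduces to the bookkeeping sketched above.
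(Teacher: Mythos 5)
Your proof is correct and follows essentially the same route as the paper's: both encode each vertex by its left/right path from the root, observe that the level is the number of right steps, and show that the levelwise conditions are equivalent to sorting by level and breaking ties lexicographically with "right" before "left," via the same two-case analysis (prefix versus first disagreement). Your write-up is somewhat more explicit in separating the existence and uniqueness verifications, but no new idea is involved.
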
  
\begin{proof}
By Definition~\ref{definition_levelwise} vertices at the same level must
be numbered consecutively. We only need to show that the second and
third rules of the definition uniquely determine the labeling of the vertices
at the same level. We will show this by
considering for each vertex $v$ the unique path from the root to
$v$. This path may be encoded by an {\em $r\ell$-word $RL(v)$} defined
as follows. As we move along the path from the root to a vertex $v$, we
record a letter $r$ each time we move to the right child and a letter
$\ell$ each time to a left child. For example, $r\ell\ell$ represents
the left child of the left child of the right child of the root. Clearly
the level of $v$ is the number of letters $r$ in its $r\ell$-word. It
suffices to show that for vertices at the same level, the second and
third rules amount to ordering their $r\ell$-words in the left-to-right
lexicographic order as follows: the letter $r$ precedes the letter
$\ell$ and each word is succeeded by all words obtained by appending any
number of letters $\ell$ at their right end.

Consider the $r\ell$-words
$RL(u)$ and $RL(v)$ encoding the vertices $u$ and $v$ at the same
level. Let us compare their letters left to right and stop where we see
the first difference. Two possibilities arise:
\begin{enumerate}
\item One of the two words, say $RL(u)$, is an initial segment of the
  other. Since $u$ and $v$ are at the same level, in this case we must
  have $RL(v)=RL(u)\ell^k$ for some $k$. In this case $v$ is in the left
  subtree of $u$ and by the second rule $v$ must have a higher number
  than $u$.
\item Neither of the two words is an initial segment of the other one,
  and (without loss of generality) the leftmost letter that is different
  is $r$ in $RL(u)$ and $\ell$  in $RL(v)$. In other words, we have
  $RL(u)=RL(w)r x$ and $RL(v)=RL(w)\ell y$ for some $r\ell$-words $x$
  and $y$ and a vertex $w$ whose $r\ell$-word is the longest common
  initial segment of $RL(u)$ and $RL(v)$. In this case $u$ is in the
  right subtree of $w$, $v$ is in the left subtree of $w$ and the third
  rule is applicable. 
\end{enumerate}
Note that the above two cases, as well as the premises of the second and
third rules in Definition~\ref{definition_levelwise}, are mutually
exclusive and represent a complete enumeration of all possibilities. We
have shown that the rules in Definition~\ref{definition_levelwise} are
logically equivalent to defining the above lexicographic order on the
$r\ell$-words of the vertices at the same level. 
\end{proof}

\begin{proposition}
The Foata-Strehl tree of a short $k$-Catalan--Spitzer permutation of
order $n$ is levelwise numbered.
\label{proposition_fs_levelwise}  
\end{proposition}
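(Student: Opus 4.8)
The plan is to verify the three defining conditions of Definition~\ref{definition_levelwise} for $\FS(\sigma)$, where $\sigma=\sigma(1)\cdots\sigma((k-1)n)$ is the short $k$-Catalan--Spitzer permutation induced by a $k$-Catalan path $P$. Two ingredients will do all the work. First, Corollary~\ref{corollary_levels}: the level of the vertex labeled $i$ in $\FS(\sigma)$ equals the level of the up step of $P$ labeled $i$. Second, the explicit numbering rule of Proposition~\ref{proposition_k_Catalan-short}: labels are larger for up steps at higher levels, and among the up steps at a fixed level the labels strictly decrease as one reads $P$ from left to right. I will also use the elementary structure of $\FS(\sigma)$: position $p$ in $\sigma$ is exactly the $p$-th up step of $P$ in path order, the subtree rooted at any vertex occupies a contiguous block of positions containing that vertex, with its left subtree occupying the positions of the block that precede it and its right subtree those that follow it, and consequently the root of every subtree carries the smallest label occurring in that subtree.

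Condition~(2) is then immediate and uses nothing about $P$: a vertex labeled $j$ lying in the left subtree of a vertex labeled $i$ is a proper descendant of the latter, so $i<j$. Condition~(1) follows by combining Corollary~\ref{corollary_levels} with the ``higher level, larger label'' half of the numbering rule: if the vertex labeled $i$ has smaller level than the vertex labeled $j$, then the up step labeled $i$ is at a strictly lower level of $P$ than the up step labeled $j$, hence $i<j$.

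Condition~(3) is the only one with genuine content, and it is where I would be most careful, although the argument stays short. Suppose the vertices labeled $i$ and $j$ have the same level, and that for some vertex labeled $c$ the vertex labeled $i$ lies in its right subtree while the vertex labeled $j$ lies in its left subtree. By the contiguous-block description of $\FS(\sigma)$, the position of $j$ in $\sigma$ precedes the position of $c$, which precedes the position of $i$; translating through the position-to-up-step correspondence, the up step labeled $j$ occurs strictly to the left of the up step labeled $i$ along $P$. By Corollary~\ref{corollary_levels} these two up steps sit at the same level of $P$, so the ``within a level labels decrease from left to right'' half of the numbering rule forces the up step labeled $i$, being the later of the two, to have the smaller label: $i<j$. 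With all three conditions checked, $\FS(\sigma)$ is levelwise numbered.

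The only real obstacle is bookkeeping with the directions: one must pin down that moving rightward along $P$ within a level decreases labels, and match this against the convention that the left subtree of a Foata--Strehl vertex consists of the positions strictly before it. Once those two orientations are aligned, the statement is a direct consequence of Corollary~\ref{corollary_levels} and Proposition~\ref{proposition_k_Catalan-short}.
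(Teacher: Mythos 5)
Your proof is correct and follows essentially the same route as the paper's: condition (1) via Corollary~\ref{corollary_levels} and the across-levels numbering, and condition (3) via the left-subtree-precedes/right-subtree-follows structure of $\FS(\sigma)$ combined with the right-to-left numbering within a level. Your handling of condition (2) via the fact that the root of every subtree of a Foata--Strehl tree carries its minimum label is a slight (and clean) shortcut compared to the paper, which routes that case through the lattice path as well, but the argument is the same in substance.
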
  
\begin{proof}
By Lemma~\ref{lemma_levels} the Foata--Strehl tree of a short
$k$-Catalan--Spitzer permutation $\sigma$ satisfies the first condition of
Definition~\ref{definition_levelwise}. Consider two letters $\sigma(i)$
and $\sigma(j)$ at the same level. By Lemma~\ref{lemma_levels} there is
a $k$-Catalan path $P$ inducing $\sigma$ in which $\sigma(i)$
and $\sigma(j)$ label up steps at the same level. If the vertex labeled
$\sigma(j)$ is in the left subtree of the vertex labeled $\sigma(i)$ then
$\sigma(j)$ precedes $\sigma(i)$ in $\sigma$, that is, we have $j<i$ and
$\sigma(i)<\sigma(j)$ must hold as up steps at the same level are
numbered in the right to left order. Similarly, if there is a vertex
labeled $\sigma(k)$ such that the vertex labeled $\sigma(i)$,
respectively $\sigma(j)$, is in its right, respectively left subtree,
then we must have $j<k<i$ and $\sigma(i)<\sigma(j)$ must hold. 
\end{proof}  

We conclude this section with a theorem completely describing short
$k$-Catalan--Spitzer permutations in terms of their Foata--Strehl trees.

\begin{theorem}
A permutation $\sigma=\sigma(1)\sigma(2)\cdots \sigma(n)$ is a short
Catalan--Spitzer permutation if and only if its Foata--Strehl tree
$\FS(\sigma)$ is levelwise numbered. It is also a short
$k$-Catalan--Spitzer permutation if and only if its Foata--Strehl tree
$\FS(\sigma)$ also has the following additional property: in each
longest path containing only edges between parent and right child, the
number of vertices is a multiple of $k-1$.   
\label{theorem_FS_characterization}
\end{theorem}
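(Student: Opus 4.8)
The plan is to prove the two biconditionals separately, leaning on the structural results already established. For the first statement (the $k=2$ case), the forward direction is essentially Proposition~\ref{proposition_fs_levelwise}: if $\sigma$ is a short Catalan--Spitzer permutation then $\FS(\sigma)$ is levelwise numbered. The substance lies in the converse: given a levelwise-numbered plane $0-1-2$ tree $T$ on $n$ vertices, I would construct a Dyck path $P$ (a $2$-Catalan path of order $n$) that induces the permutation $\sigma$ read off from $T$. The construction mimics the reconstruction in the proof of Proposition~\ref{proposition_csp_injection}: assign to each vertex its level as in Definition~\ref{definition_levels}, list the vertices $\sigma(1),\sigma(2),\ldots,\sigma(n)$ in the unique order determined by the levelwise numbering (so that $\sigma(i)$ has the label $i$), and build $P$ by taking, for consecutive labels $i$ and $i+1$, one up step at level $\mathrm{lev}(i)$ followed by $\mathrm{lev}(i)-\mathrm{lev}(i+1)+1$ down steps. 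First I would check this yields a genuine Dyck path: the level of a right child is one more than that of its parent and the level of a left child is not larger, and the levelwise numbering forces the label sequence to traverse levels in a way that keeps partial heights nonnegative and returns to zero. Then I would verify that numbering the up steps of this $P$ right-to-left within a level and increasing across levels recovers precisely the labels $1,\ldots,n$; here I would invoke the equivalence, proven inside Proposition~\ref{proposition_unique}, between the levelwise rules and the lexicographic order on $r\ell$-words, since the right-to-left order of up steps at a fixed level in a Dyck path is exactly that lexicographic order on the corresponding $r\ell$-words.

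For the $k$-generalization, I would argue that among all Dyck paths inducing a given short Catalan--Spitzer permutation $\sigma$ (there is a unique one once the tree is fixed — but the path structure does depend only on the levels), the path $P$ is a valid $k$-Catalan path exactly when every down step is a drop of $k-1$ levels, i.e. when the partial-sum structure factors into blocks of size $k$. Using Lemma~\ref{lemma_levels}, a maximal chain of parent-to-right-child edges in $\FS(\sigma)$ corresponds to a maximal ascending run of up steps in $P$ — a run of consecutive up steps uninterrupted by a down step. The number of vertices on such a maximal right-path equals the length of that ascending run. In a $k$-Catalan path, between two consecutive down steps (each dropping $k-1$) the up steps must come in groups whose sizes, together with the interleaved down steps, keep the path on the lattice; tracking heights modulo $k-1$ shows that the total number of up steps in each maximal ascending run before being ``cut'' by a down step must be divisible by $k-1$. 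Conversely, if every maximal right-path in $\FS(\sigma)$ has vertex-count divisible by $k-1$, then reconstructing $P$ as above and replacing each length-$(k-1)$ block of up steps followed by the appropriate number of down steps produces a legitimate $k$-Catalan path inducing $\sigma$.

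I expect the main obstacle to be the bookkeeping in the converse of the first biconditional: showing the reconstructed path $P$ is actually a Dyck path and that its right-to-left-within-level numbering of up steps agrees with the levelwise numbering of $T$. The delicate point is that the levelwise numbering orders vertices at a common level by the $r\ell$-word lexicographic order (with $r$ before $\ell$ and shorter-with-trailing-$\ell$'s coming first), and one must check this coincides with the order in which the corresponding up steps appear from right to left along $P$ — equivalently, from the last return to that level backwards. I would handle this by induction on the tree structure, peeling off the root's right subtree (which, by Lemma~\ref{lemma_levels}, corresponds to the portion of $P$ from its first up step to its first return to level zero) and its left subtree (corresponding to the remainder of $P$ at level zero), reducing to smaller levelwise-numbered trees; the level and ordering constraints transfer cleanly under this decomposition. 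The $k$-part then reduces to the observation that a maximal run of up steps in a Dyck path can be ``thickened'' into blocks of size $k-1$ precisely when its length is divisible by $k-1$, and that the position of each down step in $P$ is forced by the level differences, so no further freedom or obstruction arises.
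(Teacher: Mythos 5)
Your treatment of the first biconditional is correct but takes a genuinely different route from the paper's. The paper never reconstructs a path from a levelwise numbered tree: it notes that the map from Catalan paths to short Catalan--Spitzer permutations is injective (Proposition~\ref{proposition_csp_injection}), that every short Catalan--Spitzer permutation has a levelwise numbered Foata--Strehl tree (Proposition~\ref{proposition_fs_levelwise}), and that both the short Catalan--Spitzer permutations of order $n$ and the levelwise numbered plane $0-1-2$ trees on $n$ vertices are counted by $C_n$ (the latter because each of the $C_n$ tree shapes has a unique levelwise numbering by Proposition~\ref{proposition_unique}); the inclusion between two equinumerous finite sets must then be an equality. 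Your explicit reconstruction, together with the verification via the lexicographic order on $r\ell$-words, can be made to work, but it costs exactly the bookkeeping you anticipate, all of which the counting argument avoids.

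The second part of your proposal has a genuine error. A maximal parent-to-right-child chain in $\FS(\sigma)$ does \emph{not} correspond to a maximal run of consecutive up steps of $P$: by Lemma~\ref{lemma_levels}, the right child of $\sigma(i)$ is the label of the \emph{last} up step before the first return of $P$ to the level of $\sigma(i)$, not of the up step immediately following $\sigma(i)$. Concretely, take the $3$-Catalan path with steps $U,U,U,D,U,D$ where $D=(1,-2)$. Its induced permutation is $1,3,4,2$, whose tree has maximal right chains $\{1,2\}$ and $\{3,4\}$ of sizes $2$ and $2$, while the maximal ascending runs of up steps have lengths $3$ and $1$. The same example refutes your claim that the up-runs of a $k$-Catalan path must have length divisible by $k-1$: here $3$ and $1$ are not divisible by $2$, yet the path is a legitimate $3$-Catalan path, so the ``tracking heights modulo $k-1$'' step would fail. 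The correct correspondence is with the maximal runs of consecutive \emph{down} steps in the unit-down-step version of the path: matching each down step with the up step it closes, the up steps closed by one maximal down run are precisely the vertices of one maximal right chain (read from bottom to top), so the chain sizes equal the down-run lengths, and it is the down runs whose lengths must be multiples of $k-1$ for the Catalan path of order $(k-1)n$ to arise from a $k$-Catalan path. With that substitution your argument for the second biconditional goes through and matches the paper's sketch.
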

\begin{proof}
By Proposition~\ref{proposition_csp_injection} the number of short
Catalan--Spitzer permutations of order $n$ is the Catalan number
$C_n$. By Proposition~\ref{proposition_fs_levelwise} the Foata--Strehl
tree $\FS(\sigma)$ of each short Catalan--Spitzer permutation must be
levelwise numbered. By Proposition~\ref{proposition_unique} each plane
$0-1-2$ tree has exactly one levelwise numbering, and the number of plane
$0-1-2$ trees on $n$ vertices is also the Catalan number~$C_n$.
Therefore the set of Foata--Strehl trees of all short
Catalan--Spitzer permutations of order $n$ must equal the set of all
levelwise numbered plane $0-1-2$ trees on $n$ vertices.    

To prove the second statement, observe that each $k$-Catalan path of
order $n$ may be transformed into a Catalan path of order $(k-1)n$ by
replacing each down step $(1,1-k)$ with a run of $(k-1)$ consecutive
down steps $(1,-1)$. Under this correspondence $k$-Catalan paths of
order $n$ bijectively correspond to those Catalan paths of order
$(k-1)n$ in which the length of each longest run of consecutive down
steps is a multiple of $(k-1)$. Using this correspondence it is easy to
see that a short Catalan--Spitzer permutation of order $n$ is also a short
$k$-Catalan--Spitzer permutation of order $n$ if and only if for each
$i\in \{1,2,\ldots,n-1\}$ the difference between the level of
$\sigma(i)$ and the level of $\sigma(i+1)$ is a multiple of $k-1$. This
condition is equivalent to the condition stated in the theorem. The
details are left to the reader. 
\end{proof}  

\section{A restricted Foata--Strehl group action and its enumerative
  consequences} 
\label{section_restricted_Foata_Strehl}

Foata--Strehl trees were first
defined~\cite{Foata-Strehl1,Foata-Strehl2} to introduce the {\em
  Foata--Strehl group action} on permutations of order $n$. This
${\mathbb Z}_2^{n-1}$ action is generated by $n-1$ commuting involutions
$\phi_1,\phi_2,\ldots,\phi_{n-1}$, where $\phi_i$ swaps the left and
right subtrees of the vertex labeled $i$ in the Foata--Strehl tree of
each permutation. (Both subtrees may be empty.)
Theorem~\ref{theorem_FS_characterization} provides a characterization of
short $k$-Catalan--Spitzer permutations in terms of their Foata--Strehl
trees. Unfortunately in most cases the Foata--Strehl action destroys the
levelwise numbering, except for some special situations. In this
section we focus on such a special situation, introduce a subgroup of
the Foata--Strehl group action, and observe how it may be used in
proving identities in enumerative combinatorics beyond the world
of Catalan objects.

\begin{definition}
Let $X$ be an ordered alphabet, $x\in X$ and $w$ a permutation of
$X$. We say that $w$ is $x$-decomposable if it may be written in the
form $w=w_1w_2w_3$ such that the following hold:
\begin{enumerate}
\item All letters of $w_1$ and $w_3$ are less than $x$;
\item all letters of $w_2$ are greater than or equal to $x$;
\item the letter $x$ is either the first or the last letter of $w_2$.
\end{enumerate}
Under the above circumstances we call the decomposition $w=w_1w_2w_3$
the {\em $x$-decomposition} of $w$. An {\em $x$-flip} consists of
moving the letter $x$ from one end of the subword $w_2$ to its other
end.   
\end{definition}
When $x=\max(X)$ is the largest element of the alphabet $X$, any
permutation is {\em 
  trivially $x$-decomposable} since $w_2$ must consist of the single
letter $x$. For this $x$, the $x$-flip is the identity map. For all
other $x\in X$ an $x$-decomposition must be {\em nontrivial} as $w_2$
must contain all letters larger than $x$. The application of an $x$-flip
results in a different word.  The verification of the following
equivalent description is left to the reader.

\begin{proposition}
\label{proposition_decomposition_FS}
For $x<\max(X)$ the permutation $w$ is $x$-decomposable if and only if
its Foata--Strehl tree $\FS(w)$ satisfies the following:
\begin{enumerate}
\item exactly one of the right and left subtrees of $x$ is empty; 
\item the set of descendants of $x$ contains all letters larger than $x$.  
\end{enumerate}
If $w$ has an $x$-decomposition, an $x$-flip is precisely the
application of the operation $\phi_x$ of the Foata--Strehl group
action. 
\end{proposition}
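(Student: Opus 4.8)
The plan is to translate both $x$-decomposability and the operation $\phi_x$ into statements about where the vertex labeled $x$ sits inside $\FS(w)$, via the standard dictionary between subtrees of a Foata--Strehl tree and factors of the word. The preliminary fact I would record first is: (a) the in-order traversal of $\FS(w)$ — left subtree, then root, then right subtree, applied recursively — returns the word $w$; and (b) for every letter $a$, the subtree of $\FS(w)$ rooted at the vertex labeled $a$ occupies a contiguous block of positions in $w$, and that block is the \emph{maximal} factor of $w$ whose set of positions contains the position of $a$ and on which $a$ attains the minimum. Both statements follow by an easy induction on the length of $w$ directly from the recursive definition of $\FS$, using that the root of $\FS(w)$ is the global minimum and hence no interval on which $a\neq\min(w)$ is the minimum can straddle the position of $\min(w)$.

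Granting (a) and (b), the equivalence is bookkeeping. For the forward direction, let $w=w_1w_2w_3$ be the $x$-decomposition. Every letter of $w_2$ is $\geq x$ and $x$ lies in $w_2$, so $x=\min(w_2)$; the letters of $w$ immediately flanking $w_2$ (if any) belong to $w_1w_3$ and are $<x$, so $w_2$ is maximal with this property, and by (b) the subtree of $\FS(w)$ rooted at $x$ is precisely $w_2$. Thus the descendants of $x$ are exactly the letters of $w_2$; since every letter $<x$ occurs in $w_1w_3$, the subtree of $x$ contains all letters $>x$, which is condition (2). Because $x$ is the first or the last letter of $w_2$, the subtree of $x$ on that side (spanning the empty part of $w_2$ beyond $x$) is empty; both subtrees cannot be empty, for then $w_2=x$ and every other letter of $w$ is $<x$, forcing $x=\max(X)$, against the hypothesis $x<\max(X)$. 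Hence exactly one subtree is empty, which is condition (1). Conversely, assume $\FS(w)$ satisfies (1) and (2), and let $w_1w_2w_3$ be the factorization of $w$ in which $w_2$ is the factor spanned by the subtree rooted at $x$ (which exists by (b)). By (1), one of the two subtrees of $x$ is empty, so $x$ is the first or last letter of $w_2$; we have $x=\min(w_2)$, so all letters of $w_2$ are $\geq x$; and by (2) the letters of $w$ outside $w_2$, being the non-descendants of $x$, are all $<x$. Thus $w_1w_2w_3$ is an $x$-decomposition of $w$.

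For the final assertion I would read off what $\phi_x$ does to $w$ from part (a): since $\phi_x$ swaps the left and right subtrees of the vertex labeled $x$ and changes nothing else in $\FS(w)$, at the level of words it replaces the factor $w_2=u_{\ell}\,x\,u_r$, where $u_{\ell}$ and $u_r$ are the in-order readings of the left and right subtrees of $x$, by $u_r\,x\,u_{\ell}$, keeping $w_1$ and $w_3$ fixed. When $w$ is $x$-decomposable one of $u_{\ell},u_r$ is empty, so $w_2$ equals $x\,u$ or $u\,x$ with $u$ the in-order reading of the nonempty subtree of $x$, and $\phi_x$ sends it to $u\,x$ or $x\,u$ respectively — precisely the $x$-flip that moves $x$ from one end of $w_2$ to the other.

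I expect the only genuine content is in formulating and proving the preliminary fact (b); after that the argument is routine. The one subtlety to watch is the elimination of the case in which both subtrees of $x$ are empty, which is exactly the point at which the hypothesis $x<\max(X)$ is used (and also why the degenerate case $x=\max(X)$, where $\phi_x$ and the $x$-flip are both the identity, is consistent with the last sentence of the proposition).
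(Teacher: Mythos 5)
Your proof is correct. The paper explicitly leaves the verification of this proposition to the reader ("The verification of the following equivalent description is left to the reader"), so there is no argument in the text to compare against; your route --- first establishing that the in-order traversal of $\FS(w)$ recovers $w$ and that the subtree rooted at a letter $a$ spans the maximal factor containing the position of $a$ on which $a$ is the minimum, then translating the two conditions and the flip into statements about that factor --- is the natural one and supplies exactly the bookkeeping the authors had in mind. You also correctly isolate where the hypothesis $x<\max(X)$ is used (ruling out the case where both subtrees of $x$ are empty), consistent with the paper's preceding discussion of the trivial case $x=\max(X)$.
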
  
An important consequence of Proposition~\ref{proposition_decomposition_FS}
is that $x$-flips and $y$-flips commute the same way the generators of
the Foata--Strehl group action commute.
\begin{corollary}
If a word $w$ is simultaneously $x$-decomposable and $y$-decomposable
then the same holds for $\phi_x(w)$, $\phi_y(w)$ and
$\phi_x\phi_y(w)$. Furthermore,  $\phi_x\phi_y(w)=\phi_y\phi_x(w)$
\end{corollary}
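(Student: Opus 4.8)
The plan is to push everything through the Foata--Strehl tree picture using Proposition~\ref{proposition_decomposition_FS}, and then read off both assertions from the way the generators $\phi_1,\ldots,\phi_{n-1}$ of the Foata--Strehl group act on plane $0-1-2$ trees. First I would dispose of the degenerate cases: if $x=y$ there is nothing to prove since $\phi_x$ is an involution, and if $x=\max(X)$ or $y=\max(X)$ then the corresponding flip is the identity map and every word is decomposable with respect to that letter, so the claim collapses to the single-generator statement handled below. Thus assume $x\neq y$ with $x,y<\max(X)$.

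By Proposition~\ref{proposition_decomposition_FS}, a word $u$ is $x$-decomposable precisely when, in $\FS(u)$, exactly one of the two subtrees of the vertex $x$ is empty and the descendants of $x$ include every letter larger than $x$; in that case the $x$-flip of $u$ is exactly $\phi_x(u)$, obtained by swapping those two subtrees. The operation $\phi_x$ on trees only rearranges the subtree rooted at $x$: it fixes every vertex outside that subtree together with the edge entering $x$, and merely interchanges the two subtrees hanging off $x$. Using this I would check that $\phi_x$ preserves $y$-decomposability, by a short case analysis on the position of $y$ relative to $x$ in $\FS(w)$. If $y$ is neither an ancestor nor a descendant of $x$, then $\phi_x$ does not disturb the subtree rooted at $y$ at all. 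If $x$ is a proper descendant of $y$, then $\phi_x$ rearranges a subtree strictly contained in one of the two subtrees of $y$, so the vertex set of each subtree of $y$ (hence which one is empty) and the set of descendants of $y$ are unchanged. If $y$ is a proper descendant of $x$, then $\phi_x$ carries the subtree rooted at $y$ to an isomorphic copy, again without altering its vertex set or internal shape. In every case the two conditions defining $y$-decomposability persist; the same argument, applied to the local data at $x$, shows $\phi_x$ also preserves $x$-decomposability, since one empty subtree of $x$ stays empty after the swap and the descendants of $x$ are merely permuted. By symmetry $\phi_y$ preserves both $x$- and $y$-decomposability.

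Consequently the set of words that are simultaneously $x$-decomposable and $y$-decomposable is closed under $\phi_x$ and under $\phi_y$, hence under the subgroup of the Foata--Strehl group generated by them, and this subgroup contains $\phi_x\phi_y$; this yields the first assertion and, in particular, guarantees that $\phi_x\phi_y(w)$ is a genuine composition of two flips. For the commutativity, $\phi_x$ and $\phi_y$ are two of the commuting involutions $\phi_1,\ldots,\phi_{n-1}$ generating the Foata--Strehl group action, so $\phi_x\phi_y=\phi_y\phi_x$ as maps on permutations, whence $\phi_x\phi_y(w)=\phi_y\phi_x(w)$. The only point demanding real care is the bookkeeping between the word-level notion of an $x$-flip and the tree-level operator $\phi_x$, namely ensuring that after one flip the word is still decomposable with respect to the other letter so that the composition is meaningful; once Proposition~\ref{proposition_decomposition_FS} is in hand this reduces to the three-case check above, and the commutativity is inherited from the group action with no computation at all.
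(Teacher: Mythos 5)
Your proof is correct and follows exactly the route the paper intends: the corollary is stated there without proof as an immediate consequence of Proposition~\ref{proposition_decomposition_FS} together with the fact that the generators $\phi_i$ of the Foata--Strehl action are commuting involutions, which is precisely what you verify on the tree level. (A minor remark: your first case, where $y$ is neither an ancestor nor a descendant of $x$, is vacuous, since if $x<y$ and both letters admit decompositions then the descendants of $x$ contain all letters larger than $x$, so $y$ must lie in the subtree of $x$.)
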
  

\begin{definition}
  Given an ordered alphabet $X$, we extend the $x$-flip operations $\phi_x$ to
  all permutations $w$ of $X$ by setting $\phi_x(w)=w$ whenever $w$ is
  not $x$-decomposable. We call the group action generated by the
  operations $\{\phi_x\: x<\max(X)\}$ the {\em restricted Foata--Strehl
    group action} on the permutations of~$X$. 
\end{definition}  

Our interest in $x$-decompositions and $x$-flips is due to the following
result.

\begin{theorem}
If a short Catalan--Spitzer permutation $\sigma$ of order $n$ is
$i$-decomposable for some $i<n$ then its $i$-flip $\phi_i(\sigma)$ is
also a short Catalan--Spitzer permutation.
\label{theorem_csp_orbits}
\end{theorem}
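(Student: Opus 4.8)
The idea is to use the characterization of short Catalan--Spitzer permutations from Theorem~\ref{theorem_FS_characterization}: a permutation $\sigma$ of order $n$ is a short Catalan--Spitzer permutation if and only if $\FS(\sigma)$ is levelwise numbered (this is the $k=2$ case, so no divisibility condition is needed). So the whole problem reduces to a statement about plane $0$-$1$-$2$ trees: if $\FS(\sigma)$ is levelwise numbered and $\sigma$ is $i$-decomposable for some $i<n$, then $\FS(\phi_i(\sigma))$ is again levelwise numbered. By Proposition~\ref{proposition_decomposition_FS}, $\phi_i$ acts on the tree $\FS(\sigma)$ exactly by swapping the left and right subtrees of the vertex labeled $i$, under the hypothesis that one of those subtrees is empty and all letters larger than $i$ are descendants of $i$. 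So I would phrase everything at the level of the tree $T = \FS(\sigma)$ and the tree $T' = \phi_i(T)$ obtained from it by that swap.

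First I would record the structural effect of the swap on levels. Since $\sigma$ is $i$-decomposable, the subtree rooted at $i$ is a ``caterpillar-like'' configuration: one of the children of $i$ is absent, say (without loss of generality) the left child is empty, so $i$ has only a right child, and below that right child hangs the entire set $\{i+1,\ldots,n\}$ (this is condition~(2): all letters larger than $i$ are descendants of $i$). The operation $\phi_i$ moves the whole pendant subtree from being the right child of $i$ to being the left child of $i$ (or vice versa). I would then observe how the level of each vertex changes: the level of $i$ itself and of every vertex not a descendant of $i$ is unchanged; for a descendant $v$ of $i$, its level changes by $\pm 1$ according to whether the first edge on the root-to-$v$ path (which used to go through the moved subtree edge) was or was not the swapped edge. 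Concretely, if the right child of $i$ is moved to become a left child, every descendant of $i$ has its level decreased by exactly one; if a left child is moved to the right, every descendant has its level increased by one. In either case the \emph{relative} order of levels among the descendants $\{i+1,\ldots,n\}$ is preserved, and all of them still have strictly larger level than $i$ in the decreased case, or — in the increased case — I need to check they still sit ``after'' the appropriate vertices. The crucial point is that the set of vertices at each level before and after the swap changes only by the wholesale shift of the block $\{i+1,\ldots,n\}$ by one level, and since that block consists precisely of the labels $i+1,\ldots,n$ — the top of the label range — shifting their levels up or down by one keeps condition~(1) of Definition~\ref{definition_levelwise} intact: labels $1,\ldots,i$ keep their levels, and labels $i+1,\ldots,n$ all shift together, so ``smaller level $\Rightarrow$ smaller label'' is maintained.

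Next I would check conditions~(2) and~(3) of Definition~\ref{definition_levelwise} for $T'$. Condition~(2) says: if $j$ is in the left subtree of a vertex $m$, then $m<j$. The only pairs $(m,j)$ for which the left/right subtree membership changes are those involving the vertex $i$ and its descendants: after the flip, the descendants $\{i+1,\ldots,n\}$ move from the right subtree of $i$ to its left subtree (or vice versa). In the case where they move \emph{into} the left subtree of $i$, condition~(2) now requires $i<j$ for all $j\in\{i+1,\ldots,n\}$, which is automatic. In the case where they move \emph{out of} the left subtree (into the right), we lose no instances of~(2) and must instead verify condition~(3) for the new right-subtree-of-$i$ descendants against any vertices that were formerly left-subtree siblings at the same level. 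This is where I expect the real bookkeeping to be, and it will lean on the $r\ell$-word lexicographic reformulation from the proof of Proposition~\ref{proposition_unique}: the swap at vertex $i$ replaces the first letter of the $r\ell$-word of each descendant of $i$ by the other letter, and I would argue that, combined with the level shift, the lexicographic order among vertices at each (new) common level is exactly what Definition~\ref{definition_levelwise} demands — because vertices outside the subtree of $i$ were already larger-labeled than... no: here one must be careful, since vertices outside the subtree of $i$ have labels that can be smaller than $n$ but the moved block is $\{i+1,\dots,n\}$, so at the shifted level the moved vertices have the largest labels, and the third rule orders a right-subtree descendant after a left-subtree descendant, which is consistent with the moved block (now in the right subtree of $i$) carrying the largest labels. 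I would make this precise by checking that at every level $\ell$ the labels occurring, listed in increasing $r\ell$-lex order, form a consecutive block that agrees with the levelwise numbering.

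**Main obstacle.** The hard part is the case analysis for condition~(3) when the pendant subtree is flipped from a left child to a right child: one must verify that no vertex outside the subtree of $i$ that happens to sit at the same level as some shifted descendant ends up violating the $r\ell$-lex ordering. I would handle this by noting that, because \emph{all} of $\{i+1,\ldots,n\}$ are descendants of $i$ (condition~(2) of Proposition~\ref{proposition_decomposition_FS}), any vertex sharing a level with a shifted descendant and lying outside the subtree of $i$ must have a label $\le i$, hence is automatically ordered before; and within the subtree of $i$ the $r\ell$-words all share the common prefix $RL(i)$ followed by the single swapped letter, so their relative $r\ell$-lex order is unchanged by the swap, which preserves rule~(3) internally. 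Once conditions~(1)--(3) are verified for $T'$, Proposition~\ref{proposition_unique} (uniqueness of levelwise numbering) plus the ``only if'' direction of Theorem~\ref{theorem_FS_characterization} ($k=2$) immediately give that $\phi_i(\sigma)$ is a short Catalan--Spitzer permutation, completing the proof. Throughout I would keep in mind that $i<n$ is needed precisely so that $\sigma$ has a \emph{nontrivial} $i$-decomposition and $\phi_i$ genuinely acts — for $i=n$ the statement is vacuous.
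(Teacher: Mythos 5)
Your proposal follows essentially the same route as the paper's proof: reduce via Theorem~\ref{theorem_FS_characterization} to showing the flipped tree is still levelwise numbered, observe that the flip shifts the levels of the block $\{i+1,\ldots,n\}$ (the descendants of $i$, all hanging below its unique child $i+1$) uniformly by one so that condition~(1) survives because that block merges with or splits off from the level of $i$, and then settle conditions~(2)--(3) by the $r\ell$-word lexicographic comparison, using the common prefix through $i$ for pairs of descendants and the fact that any same-level vertex outside the subtree of $i$ has label at most $i$ for mixed pairs. The bookkeeping you flag as the remaining work is exactly the case analysis the paper carries out (doing one flip direction in detail and leaving the other to the reader), so the plan is sound and matches the intended argument.
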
  
\begin{proof}
By Proposition~\ref{proposition_decomposition_FS} the set of descendants of
$i$ is the set $\{i+1,i+2,\ldots,n\}$ and all of them are in the subtree
of $i+1$, which is the only child of $i$. If $i+1$ is the right child of
$i$ then the level of $i+1$ is one more than the level of $i$, $i$ is
the largest letter at its level and $i+1$ is the smallest letter at its
level. Moving $i+1$ to the left of $i$ results in merging the levels of
$i$ and $i+1$ into a single level: the level of the labels larger than
$i+1$ uniformly decrease by one. For all other labels, the sets of
labels at the same level remains unchanged. Consider two vertices
$u$ and $v$ whose label belongs to the merged level. If the labels of
$u$ and $v$ are both greater than $i$, then the shortest path leading to
both contains the vertex $i+1$: moving $i+1$ to the left induces
changing a letter $r$ into a letter $\ell$ in the same position in both
$RL(u)$ and $RL(v)$. Such a change does not affect the relative order of
the two $r\ell$-words in the lexicographic order. If the labels of
$u$ and $v$ are both at most $i$, then $RL(u)$ and $RL(v)$ remain
unchanged after moving $i+1$ to the left of $i$. Consider finally the
case when the label of $u$ is at most $i$ and the label of $v$ is less
than $i$. When the vertex labeled $i+1$ is the right child of the vertex
labeled $i$ then the label of $v$ is more than the label of $u$ because
they are at different levels. When we move the vertex labeled $i+1$ to
the left, the vertex $v$ becomes a vertex in the left subtree of the
vertex labeled $i$, hence its label must be still greater than $i$ and
also greater than the label of the vertex $u$ (whose $r\ell$-word
remains unchanged).

We have shown that applying an $i$-flip to an $i$-decomposable
Catalan-Spitzer permutation that moves $i+1$ from the right to the left
results in a Catalan-Spitzer permutation. The proof of the converse is
analogous and left to the reader. 
\end{proof}  

As a consequence of Theorem~\ref{theorem_csp_orbits} the set of all
Catalan-Spitzer permutations of order $n$ may be partitioned into
orbits of the restricted Foata--Strehl group action. A question
naturally arises, namely, what is the number of such 
orbits. We answer this in the greatest generality. A {\em class
  of permutations} $\mathcal P$ is a rule assigning to each finite
ordered set $X$ a subset ${\mathcal P}_{|X|}(X)$ of its permutations in
such a way that membership of $w$ in ${\mathcal P}_{|X|}(X)$ depends only
on the pattern of $w$. For brevity, we will say that a permutation $w$
{\em belongs to the class ${\mathcal P}$} if $w$ is an element of
${\mathcal P}_{|X|}(X)$ for some finite set~$X$. 
\begin{definition}
A class of permutations is {\em compatible with the
  restricted Foata--Strehl group action} if it satisfies the following:
for each $x\in X$ and  for each $x$-decomposable $w\in {\mathcal
  P}_{|X|}(X)$ the following holds:
\begin{enumerate}
\item The permutation $\phi_x(w)$ belongs to to the class ${\mathcal
  P}$. 
\item If $w_1w_2w_3$ is the $x$-decomposition of $w$ then $w_2$ and
  $w_1w_3$ also belong to the class ${\mathcal P}$. 
\end{enumerate}
\end{definition}
For a class of permutations ${\mathcal P}$ that is compatible with the
restricted Foata--Strehl group action, let~$P_n$ respectively $O_n$, be
the number of permutations in ${\mathcal P}(\{1,2,\ldots,n\})$,  respectively
number of orbits of the restricted Foata--Strehl group action on the set
${\mathcal P}(\{1,2,\ldots,n\})$. We introduce the ordinary generating
functions
\begin{equation}
P(x)=\sum_{n\geq 1} P_n\cdot x^n \quad \mbox{and}\quad Q(x)=\sum_{n\geq
  1} Q_n\cdot x^n.
\label{equations_gfns}
\end{equation}  
We consider these generating functions as formal power series from
${\mathbb Q}[[x]]$. Our first general result is the following.
\begin{theorem}
The generating functions $P(x)$ and $O(x)$ satisfy 
\begin{equation}
\label{equation_po}  
P(x)=\frac{O(x)}{1-O(x)}
\end{equation}
equivalently,  
\begin{equation}
\label{equation_op}  
O(x)=\frac{P(x)}{1+P(x)}
\end{equation}
\label{theorem_op}
\end{theorem}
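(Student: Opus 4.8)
The plan is to establish the equivalent identity $1+P(x)=1/(1-O(x))$, which unwinds to the statement that every permutation belonging to $\mathcal P$ is, in a unique way, a ``concatenation'' of shorter permutations, each of which is singled out as the distinguished representative of its orbit. Thus the argument has two halves: (i) exhibit a distinguished (``canonical'') representative in each orbit, so that the number of orbits of a given order $m$ equals the number of canonical permutations of order $m$ and $O(x)$ is their generating function; and (ii) show that every permutation in $\mathcal P$ factors uniquely as a sequence of canonical permutations under a suitable gluing operation, which forces $P(x)=O(x)+O(x)P(x)$ and hence \eqref{equation_po}--\eqref{equation_op}.

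For (i) I would argue as follows. By Proposition~\ref{proposition_decomposition_FS}, the generators $\phi_x$ that act nontrivially on some element of a fixed orbit --- call the corresponding $x$ the \emph{decomposition values} of the orbit --- form an orbit invariant, and two elements of the orbit differ precisely in which of these decomposition values sit at the left, rather than the right, end of the block of positions carrying the values $\geq x$. Declaring every decomposition value to be at the left end of its block picks out one element $\widehat\gamma$ from each orbit $\gamma$, and $\gamma\mapsto\widehat\gamma$ is a bijection onto the resulting set of canonical permutations. For (ii), given $w\in\mathcal P(\{1,\ldots,n\})$ with $n\geq 1$, one peels off a canonically determined ``leading block'' $\beta$ of some order $k$, chosen so that it is literally the word $w_2$ (or $w_1w_2$) arising in a genuine $x$-decomposition of $w$; part~(2) of the definition of compatibility then guarantees that both $\beta$ and the complementary factor $w'$ (a permutation of order $n-k$) lie in $\mathcal P$, which is exactly why the cut must be made at an actual $x$-decomposition and not at an arbitrary place. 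Since $\mathcal P$ is closed under the restricted Foata--Strehl action --- part~(1) of compatibility, used as in the proof of Theorem~\ref{theorem_csp_orbits} --- one may record $\beta$ only up to its orbit $\gamma$, reconstructing $w$ from the pair $(\gamma,w')$ by reinserting the canonical representative $\widehat\gamma$ in front of $w'$ according to a fixed rule. Taking $w'$ empty recovers exactly the canonical permutations (the $w$ that equal their own leading block), so $w\mapsto(\gamma,w')$ is the required bijection between $\mathcal P(\{1,\ldots,n\})$ and pairs $(\gamma,w')$ with $\gamma$ an orbit of order $k$ for some $1\leq k\leq n$ and $w'\in\mathcal P(\{1,\ldots,n-k\})$; counting $x$-degrees then yields $P=O(1+P)$.

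The hard part is making the construction in (ii) precise: one must show that the leading block exists, is uniquely determined, and that, although a permutation is in general not recoverable from its orbit alone, the block $\beta$ \emph{is} recoverable once the orbit $\gamma$, the remainder $w'$, and the reinsertion rule are fixed --- equivalently, the reinsertion rule $(\gamma,w')\mapsto w$ must be verified to be orbit-invariant. A related subtlety is the backward direction of $\mathcal P$-membership, namely that reinserting $\widehat\gamma$ in front of $w'$ returns a permutation in $\mathcal P$; this again uses closure under the restricted action together with condition~(2) in the reduced situation. The remaining verifications --- the base case $n=0$ (empty permutation, empty sequence), the bookkeeping of value-shifts, and the mutual inversion of the two maps --- are routine. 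As a consistency check, when $\mathcal P$ is the class of all permutations one has $P_n=n!$, the recursion $P_n=\sum_{k=1}^{n}O_k\,(1+P)_{n-k}$ is the classical recursion for indecomposable permutations, and $O_n$ equals their number, in agreement with the remark in the Introduction.
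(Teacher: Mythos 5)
Your overall strategy is the same as the paper's: one distinguished representative per orbit (all decomposition values oriented the same way), plus a unique factorization of every permutation into a distinguished representative of some order $k$ and a free remainder of order $n-k$, yielding $P_n=O_n+\sum_{k=1}^{n-1}O_k\,P_{n-k}$ and hence \eqref{equation_po}. Your part (i) is fine and matches the paper. The genuine gap is in part (ii), which you correctly flag as the hard part but then leave in a form that does not work as stated. If you cut at a decomposition value $x$ and record $\beta=w_2$ together with $w'=w_1w_3$, the pair of patterns does not determine $w$: you have forgotten the position at which the block $w_2$ must be reinserted, i.e., the boundary between $w_1$ and $w_3$. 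Taking $\beta=w_1w_2$ and $w'=w_3$ is worse, since one then also forgets which of the small letters belong to $w_1$ rather than $w_3$. Moreover $\beta$ is in general not the distinguished representative of its orbit, so ``recording $\beta$ up to its orbit'' and reinserting $\widehat\gamma$ genuinely destroys information rather than merely reparametrizing it.

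The paper resolves all of this with one concrete choice that your proposal lacks. For a non-distinguished $\sigma$, let $k$ be the \emph{smallest} decomposition value whose orientation is wrong, i.e., for which $k+1$ is the left child of $k$ in $\FS(\sigma)$, while $i+1$ is a right child for every smaller decomposition value $i$. Deleting the left subtree of $k$ (the subtree rooted at $k+1$, which carries exactly the letters $k+1,\ldots,n$) leaves the Foata--Strehl tree of a permutation $\tau$ of $\{1,\ldots,k\}$, which is a distinguished representative precisely because $k$ was chosen minimal; the deleted subtree is the Foata--Strehl tree of an arbitrary $\rho\in\mathcal P(\{k+1,\ldots,n\})$. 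In word terms the recorded factors are $w_1kw_3$ and $w_2$ with the letter $k$ removed, so the insertion position is remembered as the position of $k$ inside $w_1kw_3$; hence $(\tau,\rho)\mapsto\sigma$ is a genuine inverse and the two factors can be chosen independently, giving the recursion. This choice of the minimal wrongly oriented decomposition value, together with the check that the small-value factor is then automatically distinguished, is exactly the missing content of your step (ii); without it your map is neither well defined nor injective.
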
  
\begin{proof}
We only need to show~(\ref{equation_po}) as equation~(\ref{equation_op})    
is algebraically equivalent. 
For each $\sigma\in {\mathcal P}(\{1,2,\ldots,n\})$ denote the
orbit of $\sigma$ under the restricted Foata--Strehl group action by
$[\sigma]$. To each orbit $[\sigma]$ we may associate a set $I([\sigma])\subseteq
\{1,2,\ldots,n-1\}$ such that each permutation in the orbit is
$i$-decomposable if and only if $i$ is an element of $I([\sigma])$. The
size of the orbit will be $2^{|I([\sigma])|}$. We say that 
$\sigma$ is a {\em distinguished orbit representative} if for each
$i\in I$, the letter $i+1$ is to the right of 
$i$ in $\sigma$. Equivalently, in the Foata--Strehl tree $\FS(\sigma)$
of $\sigma$,  each $i\in I([\sigma])$ has a right child and not a left
child. Clearly there is exactly one distinguished orbit representative
in each orbit. For all permutations $\sigma\in {\mathcal
  P}(\{1,2,\ldots,n\})$ that are not distinguished orbit
representatives, there is a unique smallest $k\in I([\sigma])$ such that $k+1$ is
the left child of $k$ and $i+1$ is a right child for all $i\in I([\sigma])$
satisfying $i<k$. The removal of the left subtree of $k+1$ results in
the Foata--Strehl tree of a distinguished orbit representative in
${\mathcal P}(\{1,2,\ldots,k\})$, whereas the left 
subtree of $k$ is the Foata--Strehl tree of a permutation in ${\mathcal
  P}(\{k+1,k+2,\ldots,n\})$. The two permutations may be selected
independently and determine $\sigma$ uniquely. This observation
justifies the formula
$$
P_n=O_n+\sum_{k=1}^{n-1} O_{k}\cdot P_{n-k}. 
$$
The stated formula for the generating functions follows immediately. 
\end{proof}  
\begin{example}
{\em If ${\mathcal P}$ is the class of short Catalan-Spitzer permutations
then $P(x)=C(x)-1$ where
$$C(x)=
\sum_{n \geq 0} C_{n} \cdot x^{n} = \frac{1-\sqrt{1-4x}}{2x}
$$
is the generating function of the Catalan numbers. Equation~(\ref{equation_op})
gives
$O(x)=\frac{C(x)-1}{C(x)}=x\cdot C(x)$.
Hence $O_n=C_{n-1}$.
 }
\end{example}
\begin{example}
{\em If ${\mathcal P}$ is the class of all permutations then
  $P(x)=\sum_{n\geq 1} n!x^n$. Equation~(\ref{equation_op})
gives the ordinary generating function of the indecomposable
permutations. The numbers of these are listed as sequence A003319
in~\cite{OEIS}. 
 }
\end{example}

To refine Theorem~\ref{theorem_op} let $P_{n,k}$ denote the 
number of permutations belonging to an orbit of size $2^k$ of the
restricted Foata--Strehl group action on ${\mathcal
  P}(\{1,2,\ldots,n\})$ and let $O_{n,k}$ be the number of orbits of
size $2^k$. Clearly we have 
\begin{equation}
\label{equation_orbits}  
O_{n,k}=\frac{P_{n,k}}{2^k}. 
\end{equation}
We introduce the generating functions
$$P(x,y)=\sum_{n\geq 1, k\geq 0} P_{n,k} x^ny^k\quad\mbox{and}\quad
O(x,y)=\sum_{n\geq 1, k\geq 0} O_{n,k} x^ny^k.$$
\begin{theorem}
The generating functions $P(x,y)$ and $O(x,y)$ are given by 
\begin{align}
\label{equation_pxy}
P(x,y) & = \frac{P(x)}{1-2(y-1)P(x)} , \\
\label{equation_oxy}
O(x,y) & = \frac{P(x)}{1-(y-2)P(x)} .
\end{align}
\label{theorem_op_refined}
\end{theorem}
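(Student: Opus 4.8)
The plan is to mimic the structure of the proof of Theorem~\ref{theorem_op}, but keeping track of the orbit size exponent. First I would recall the combinatorial decomposition already established: every permutation $\sigma\in{\mathcal P}(\{1,\ldots,n\})$ that is \emph{not} a distinguished orbit representative splits canonically at the smallest $k\in I([\sigma])$ for which $k+1$ is the left child of $k$, producing a distinguished orbit representative $\tau\in{\mathcal P}(\{1,\ldots,k\})$ (obtained by deleting the left subtree of $k+1$) and an arbitrary permutation $\rho\in{\mathcal P}(\{k+1,\ldots,n\})$ (the left subtree of $k$). The key observation for the refinement is that this decomposition is \emph{orbit size multiplicative}: if $\sigma$ lies in an orbit of size $2^k$, then $|I([\tau])|$ counts exactly those decomposition indices of $\sigma$ below the split point together with the split index itself, while $|I([\rho])|$ counts the remaining ones in the upper block, so the orbit-size exponents add. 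I would make this precise by noting that $I([\sigma])$ is the disjoint union of $I([\tau])$ (viewed inside $\{1,\ldots,k\}$) and a shifted copy of $I([\rho])$, which is a direct consequence of Proposition~\ref{proposition_decomposition_FS}: an index $i<k$ is a decomposition index of $\sigma$ iff it is one of $\tau$, and an index $i>k$ is one of $\sigma$ iff the corresponding index is one of $\rho$.

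Next I would set up the weighted count. Assign to each orbit $[\sigma]$ the weight $y^{|I([\sigma])|}$ and sum over all permutations in the orbit; since the orbit has $2^{|I([\sigma])|}$ elements, summing over all of ${\mathcal P}(\{1,\ldots,n\})$ with weight $y^{k}$ for each element of an orbit of size $2^k$ gives $P_{n,k}$ as the coefficient, i.e.\ $P(x,y)=\sum_{n,k}P_{n,k}x^ny^k$ counts permutations with weight $y$ per decomposition index. Splitting ${\mathcal P}(\{1,\ldots,n\})$ into the distinguished orbit representatives (counted by $O_{n,k}$, each contributing weight $y^k$) and the non-representatives, and applying the decomposition above together with the multiplicativity of the weight, yields the functional equation
\begin{equation}
\label{equation_pxy_recurrence}
P(x,y)=\widetilde{O}(x,y)+\widetilde{O}(x,y)\cdot 2y\cdot\frac{P(x,y)}{2}\cdot\text{(correction)},
\end{equation}
where I must be careful about which factor of $2$ each index contributes. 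The cleanest bookkeeping is: distinguished orbit representatives are enumerated by $\sum_{n,k}O_{n,k}x^ny^k=O(x,y)$; a non-representative decomposes as a distinguished representative $\tau$ on $\{1,\ldots,k\}$ (contributing $O(x,y)$), a ``marked'' split index contributing a factor $2y$ because it is a decomposition index whose flip produces a non-representative (both of the two permutations flipping to it at this index are non-representatives, so the generating function for the block through the split point gets the full $y$-weight times the factor $2$ accounting for the left/right choice at $k+1$), and an arbitrary permutation $\rho$ on $\{k+1,\ldots,n\}$ contributing $P(x,y)$. Thus the recurrence reads $P(x,y)=O(x,y)+O(x,y)\cdot 2y\cdot P(x,y)$ — but I would double-check the factor by testing it against the specialization $y=1$, which must reproduce $P(x)=O(x)+2O(x)P(x)$; comparing with~\eqref{equation_po} forces a renormalization, revealing that the correct identity is $P(x,y)=O(x,y)\bigl(1+(y+1)P(x,y)\bigr)$ or a close variant. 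I expect pinning down these constants to be the main obstacle and would resolve it by expanding both sides through $n=3$ or $n=4$ using the all-permutations example.

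Finally, once the two functional equations $P(x,y)=O(x,y)(1+\alpha(y)P(x,y))$ and the relation~\eqref{equation_orbits} (which at the level of generating functions says $O(x,y)=P(x,2y)/(2y)$ with an appropriate interpretation, or more directly $O_{n,k}=P_{n,k}/2^k$) are in hand, it is a routine algebraic elimination: substitute $y=1$ to recover $P(x)$ via Theorem~\ref{theorem_op}, solve the resulting linear-in-$P(x,y)$ equation to get $P(x,y)$ as a rational function of $P(x)$ and $y$, and then obtain $O(x,y)$ from the $P$--$O$ relation. The expected output, matching the claimed formulas~\eqref{equation_pxy} and~\eqref{equation_oxy}, is $P(x,y)=P(x)/(1-2(y-1)P(x))$ and $O(x,y)=P(x)/(1-(y-2)P(x))$; I would verify the latter satisfies $O(x,1)=P(x)/(1+P(x))=O(x)$ as it must, and that $P(x,y)$ and $O(x,y)$ agree with~\eqref{equation_po} at $y=1$, as consistency checks. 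The details of the final algebra are left to the reader.
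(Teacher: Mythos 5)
Your overall strategy---redo the decomposition from the proof of Theorem~\ref{theorem_op} while tracking the orbit-size exponent---is genuinely different from the paper's argument, and its combinatorial core is sound: one does have $|I([\sigma])|=|I([\tau])|+1+|I([\rho])|$ across the canonical split, so the weight $y^{|I([\sigma])|}$ is multiplicative with one extra factor of $y$ for the split index. However, your functional equation carries the wrong coefficient, and for a reason that is not just a bookkeeping slip. You justify the factor $2y$ at the split index by ``the left/right choice at $k+1$,'' but in this decomposition there is no such choice: $k$ is by definition the smallest index of $I([\sigma])$ at which $k+1$ is the \emph{left} child, so the attachment side is forced. The correct identity is $P(x,y)=O(x,y)\bigl(1+y\,P(x,y)\bigr)$; your fallback guess $P(x,y)=O(x,y)\bigl(1+(y+1)P(x,y)\bigr)$ is also wrong (it fails already at $y=1$ against \eqref{equation_po}, and at the coefficient of $x^2$ for the class of all permutations, where $P_{2,1}=2$, $O_{2,1}=1$ force $\alpha(y)=y$). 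Since pinning down this constant is, by your own admission, ``the main obstacle,'' the proposal as written does not yet contain a proof of the key step.

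The second gap is the claim that the finish is ``routine algebraic elimination.'' The relation $O_{n,k}=P_{n,k}/2^k$ is not an algebraic relation between $P(x,y)$ and $O(x,y)$ at the same value of $y$; it says $O(x,y)=P(x,y/2)$. Combined with the (corrected) functional equation it yields $P(x,y/2)=P(x,y)/(1+yP(x,y))$, a genuine functional equation in the $y$-variable. It does determine $P(x,y)$ --- for instance via the ansatz $P(x,y)=P(x)/(1-c(y)P(x))$, which gives $c(y)-c(y/2)=y$ with $c(1)=0$, hence $c(y)=2(y-1)$, together with a uniqueness argument using that each $[x^n]P(x,y)$ is a polynomial in $y$ determined by its values at $y=2^j$ --- but that is an extra step you would need to supply, not a substitution. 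By contrast, the paper avoids both difficulties by marking an \emph{arbitrary} subset $\{i_1,\ldots,i_k\}$ of $I([\sigma])$ and computing $P(x,1+y)$ directly as a product of block generating functions; there each marked block genuinely can attach as a left or right subtree, so the factor $2yP(x)$ per marked index is legitimate, and the formulas \eqref{equation_pxy} and \eqref{equation_oxy} fall out by the substitutions $y\mapsto y-1$ and $y\mapsto y/2$ with no functional equation to solve. I recommend either adopting that marking argument or, if you keep your route, correcting the coefficient to $y$ with the forced-left-child observation and adding the uniqueness argument for the $y\mapsto y/2$ equation.
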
  
\begin{proof}
We use the notation $I([\sigma])$ introduced in the  proof of
Theorem~\ref{theorem_op}. Given any $\sigma\in {\mathcal
  P}(\{1,2,\ldots,n\})$ and any element  $i_1$ of $I([\sigma])$,
the permutation $\sigma$ may be written as a concatenation of words
\begin{equation}
\sigma=\sigma_0\sigma_1\sigma_0',
\label{equation_sigma_decomposition}
\end{equation}
where $\sigma_0 i_1 \sigma_0'$ is an element of ${\mathcal
    P}(\{1,2,\ldots,i_1\})$ and $\sigma_1$ is an element of ${\mathcal
  P}(\{i_1,i_1+1,\ldots,n\})$ containing $i_1$ as the first or the last
letter. At the level of Foata--Strehl trees, $\FS(\sigma)$ may be
obtained by selecting a levelwise labeled plane $0-1-2$
tree with $i_1$ vertices and then  adding any Foata--Strehl tree with
$n-i_1$ as the right or left subtree of $i_1$.  Iterating the procedure,
for any subset $\{i_1,i_2,\ldots,i_k\}$ of $I([\sigma])$ satisfying
$i_1<i_2<\cdots<i_k$ we may decompose $\FS(\sigma)$ 
into a sequence of Foata--Strehl trees $(T_0,T_1,\ldots,T_{k})$ such that
\begin{enumerate}
\item $T_1=\FS(\sigma_0i_1\sigma_0')$ for some
  $\sigma_0i_1\sigma_0'\in {\mathcal P}(\{1,2,\ldots,i_1\})$;
\item for $j=2,3,\ldots,k-1$ the labeled tree
  $T_j=\FS(\sigma_{j-1}i_j\sigma_{j-1}')$ for some
  $\sigma_{j-1}i_j\sigma_{j-1}'\in \widehat{{\mathcal
    P}}(\{i_{j-1}+1,i_{j-1}+2,\ldots,i_j\})$;
\item $T_k=\FS(\sigma_{k-1}i_k\sigma_{k-1}')$ for some
  $\sigma_{k-1}i_k\sigma_{k-1}'\in {\mathcal
  P}(\{i_{k-1}+1,i_{k-1}+2,\ldots,i_k\})$, {\em or it may be empty};
\item for $j=2,\ldots,k$ the labeled tree $T_j$ is the right or left
  subtree of $i_j$.   
\end{enumerate}
Conversely, if $\FS(\sigma)$ is decomposed into 
into a sequence of Foata--Strehl trees $(T_0,T_1,\ldots,T_{k})$
satisfying the above criteria then $\{i_1,i_2,\ldots,i_k\}$ must be a
subset of $I([\sigma])$. Introducing the variable $y$ to mark the
selected elements of $I([\sigma])$, we obtain the formula
$$
P(x,1+y)=\sum_{n\geq 1,j\geq 0} P_{n,j} x^n(1+y)^j=
(1+P(x))\cdot \sum_{k\geq 0} (2y)^k P(x)^k=
\frac{1+P(x)}{1-2yP(x)}.
$$
Equation~(\ref{equation_pxy}) follows by replacing $y$ with $y-1$ in the
last equation. To obtain~(\ref{equation_oxy}), by
equation~(\ref{equation_orbits}), we only need to substitute $y/2$ into
$y$ in~(\ref{equation_pxy}).    
\end{proof}  
Note that Theorem~\ref{theorem_op} may be obtained by substituting
$y=1$ in~(\ref{equation_oxy}).  
\begin{example}
{\em  For Catalan--Spitzer permutations we have $P(x)=C(x)-1$ and
  \begin{eqnarray*}
   O(x,y)&=&\frac{C(x)-1}{1-(y-2)(C(x)-1)}\\
   &=& x + yx^2 + (y^2 + 1)x^3 + (y^3 + 2y + 2)x^4 + (y^4 + 3y^2 +
   4y + 6)x^5\\
   && + (y^5 + 4y^3 + 6y^2 + 13y + 18)x^6+\cdots \\ 
 \end{eqnarray*}
Substituting $y=1$ respectively $y=2$ gives $O(x)=xC(x)$, respectively
$P(x)=C(x)-1$. The substitutions $y=3,4,5,6$ are listed as sequences A001700,
A049027, A076025, A076026 in~\cite{OEIS}. The generating functions
listed for these sequences are all substitutions into $y$ in  
$\frac{1 - (y - 2)xC(x)}{1 - (y - 1)xC(x)}$ which is easily seen to be
equal to $1+O(x,y)$. 

}
\end{example}  
\begin{example}
{\em  For all permutations we have $P(x)=\sum_{n\geq 1} n! x^n$ and
  \begin{eqnarray*}
   O(x,y)&=&\frac{P(x)}{1-(y-2)P(x)}\\
   &=& x + yx^2 + (y^2 + 2)x^3 + (y^3 + 4y + 8)x^4 + (y^4 + 6y^2 +
   16y + 48)x^5\\
   && + (y^5 + 8y^3 + 24y^2 + 100y + 328)x^6+\cdots
 \end{eqnarray*}
Substituting $y=1$ respectively $y=2$ gives $O(x)$, respectively
$P(x)$. The substitution $y=3$ is listed as sequence A051296 in~\cite{OEIS}. 
The presence of this substitution is not surprising:
$O(x,3)=P(x)/(1-P(x))$ is always the generating function for the ordered
collections of permutations in the class ${\mathcal P}$. }
\end{example}

\section{Concluding remarks}

Are there any other results on the distribution of the quantity $\alpha$ for more
for general paths from the origin $(0,0)$ to $(s,0)$?
For instance, what can be said if we have one type of up step,
but two types of down steps?

Are there other Fuss--Catalan structures that belongs
to a larger set of structures of cardinality
$\binom{kn+1}{n}$ with a uniformly distributed statistic
on the set $\{0,1, \ldots, kn\}$
and the Fuss--Catalan structure is the fiber of one particular
value of this statistic?

\section*{Acknowledgments}

This work was partially supported by grants from the
Simons Foundation
(\#429370 to Richard~Ehrenborg,
\#245153 and \#514648 to G\'abor~Hetyei,
\#422467 to Margaret~Readdy). Margaret Readdy was also supported by 
NSF grant DMS-2247382.

\newcommand{\journal}[6]{{\sc #1,} #2, {\it #3} {\bf #4} (#5), #6.}
\newcommand{\book}[4]{{\sc #1,} ``#2,'' #3, #4.}
\newcommand{\bookf}[5]{{\sc #1,} ``#2,'' #3, #4, #5.}
\newcommand{\arxiv}[3]{{\sc #1,} #2, {\tt #3}.}
\newcommand{\preprint}[3]{{\sc #1,} #2, preprint {(#3)}.}
\newcommand{\preparation}[2]{{\sc #1,} #2, in preparation.}
\newcommand{\toappear}[3]{{\sc #1,} #2, to appear in {\it #3}.}

\end{document}